\newtheorem{theorem}{Theorem}[section]
\newtheorem{theo}{Theorem}
\newtheorem{lm}[theorem]{Lemma}
\newtheorem{cor}[theorem]{Corollary}
\newtheorem{df}[theorem]{Definition}
\newtheorem{rem}[theorem]{Remark}
\newtheorem{pr}[theorem]{Proposition}
\begin{document}

\title[Central and supersymmetric elements]{Central elements in the distribution algebra of a general linear supergroup and supersymmetric elements}
\author{Franti\v sek Marko}
\address{Pennsylvania State University \\ 76 University Drive \\ Hazleton, PA 18202 \\ USA}
\email{fxm13@psu.edu}
\author{Alexandr N. Zubkov}
\address{Omsk State Technical University, Mira 11, 644050, Russia}
\email{a.zubkov@yahoo.com; alex.zubkov58@mail.ru}
\begin{abstract} In this paper we investigate the image of the center $Z$ of the distribution algebra $Dist(GL(m|n))$ of the general linear supergroup over a ground field of positive characteristic under the Harish-Chandra morphism $h:Z \to Dist(T)$ obtained by the restriction of the natural map $Dist(GL(m|n))\to Dist(T)$. We define supersymmetric elements 
in $Dist (T)$ and show that each image $h(c)$ for $c\in Z$ is supersymmetric. The central part of the paper is devoted to a description of a minimal set of generators of the algebra of supersymmetric elements over Frobenius kernels $T_r$.
\end{abstract}
\maketitle

\section*{Introduction}
We start by recalling a part of the generalization of the classical Harish-Chandra theory from algebraic groups to supergroups presented in \cite{chengwang}.
Let a ground field $K$ be of characteristic zero, $G=GL(m|n)$ be the general linear supergroup, $\mathfrak{g}=\mathfrak{gl}(m|n)$ be the general linear Lie superalgebra and 
$U=U(\mathfrak{g})$ be the enveloping superalgebra of $\mathfrak{g}$, and $Z$ be the center of $U$.
Let $\mathfrak{g}=\mathfrak{u}^-\oplus \mathfrak{h} \oplus \mathfrak{u}^+$ be a triangular decomposition. The PBW Theorem implies the decomposition 
$U=U(\mathfrak{h})\oplus (\mathfrak{u}^-U+U\mathfrak{u}^+)$ and the corresponding projection $\Phi: U\to U(\mathfrak{h})$, where $U(\mathfrak{h})$ is the enveloping algebra of the Cartan algebra $\mathfrak{h}$. The restriction of $\phi$ to $Z$ is an algebra homomorphism $h:Z\to U(\mathfrak{h})$. It turns out that the image $h(Z)$ is isomorphic to the algebra of supersymmetric polynomials in variables $(x_1, \ldots, x_m|y_1, \ldots, y_n)$. This property is later used to derive the linkage principle for $GL(m|n)$. For more details, please consult Section 2.2. Harish-Chandra homomorphism and linkage of the book \cite{chengwang}.

The Harish-Chandra homomorphism for $GL(m|n)$ over a field $K$ of positive characteristic $p$ was defined and studied in Section 8 of \cite{zubmar}. Instead of enveloping superalgebras 
$U(\mathfrak{g})$ and $U(\mathfrak{h})$ we considered the distribution algebras $Dist(G)$ and $Dist(T)$ of $G$ and its torus $T$. The natural map 
$Dist(G) \to Dist(T)$ restricted to the center 
$Z$ of $Dist(G)$ induces a superalgebra homomorphism $h:Z \to Dist(T)$. In Theorems 9.9 and 9.10 we have determined generators of $Z$ and of its image $h(Z)$ 
under the map $h$ for $GL(1|1)$.

The purpose of this paper is to investigate the image $h(Z)$ of the center $Z$ of $Dist(G)$ under the above map $h$ for a general $GL(m|n)$. We will define a subalgebra $SS$ of supersymmetric elements of $Dist(T)$, and show that $h(Z)$ is included in $SS$.
We expect that the equality $h(Z)=SS$ is valid for general $G$. We confirm this for $GL(1|1)$, the only case when the images $h(Z)$ are known. 
The main part of the paper is devoted to finding minimal generators of the algebras $SS$ for general $GL(m|n)$. 
Instead of $G$ and $T$, we will work with their Frobenius kernels $G_r$ and $T_r$. 

The structure of the paper is a follows. In Section 1, we derive properties of the distribution algebra $Dist(T)$ of a torus $T$ of $G$. In particular, we determine a complete set of primitive orthogonal idempotents $h_{a_1, \ldots, a_m|b_1, \ldots, y_n}$ of $Dist(T_r)$. In Section 2, we study elements $c$ of the center $Z$ of $Dist(G)$ and the properties of $h(c)$ under the morphism $h$. In Section 3, we use the properties derived in Section 2 to define the algebra $SS$ of supersymmetric elements in $Dist(T)$.
In Section 4, we determine specific types of supersymmetric elements. In Section 5, for $GL(m|1)$, we determine a basis and the dimension of $SS_r$. In Section 6, we define canonical elements and compute their dimension for every $GL(m|n)$. Finally, in Section 7, we establish the basis of $SS_r$ for every $GL(m|n)$.  


Throughout the paper, we assume that the characteristic of the ground field $K$ is $p>0$.

\section{The distribution algebra of a torus $T$}

We start by stating some elementary properties of the distribution algebra of a torus.

Let $T$ be a torus corresponding to diagonal matrices of size $m$.
The distribution algebra $Dist(T)$ of the torus $T$ has generating elements
$\binom{e_{ii}}{k}=\frac{e_{ii}(e_{ii}-1)\ldots (e_{ii}-k+1)}{k!}$ for $i=1, \ldots, m$ and $k>0$.
For simplicity, we will write $x_i=e_{ii}$ for $i=1, \ldots, m$.

The multiplication in $Dist(T)$ is such that $\binom{x_i}{a}$ and $\binom{x_j}{b}$ commute when $i\neq j$ while
\begin{equation}\binom{x_i}{a}.\binom{x_i}{b} = \sum_{i=0}^{\min\{a,b\}} \binom{a+b-i}{a-i}\binom{b}{b-i} \binom{x_i}{a+b-i}\end{equation}
for each $i=1, \ldots m$.
In particular, 
\[x_i\binom{x_i}{k}=(k+1)\binom{x_i}{k+1}+k\binom{x_i}{k}.\]

Denote by $T_r$ the $r$-th Frobenius kernel of $T$, and by $Dist(T_r)$ its distribution algebra.

If $0\leq a,b<q$, and $0\leq i\leq \min\{a,b\}$ is such that $a+b-i\geq q$, then there is a $p$-adic carry when $a-i$ is added to $b$. By Kummer's criteria, this means that
$\binom{a+b-i}{a-i}$ is divisible by $p$. Therefore, the $K$-span of the set $\mathcal{B}_r$ of monomials $\binom{x_1}{a_1}\ldots \binom{x_m}{a_m}$, where each $0\leq a_i<q$, is closed with respect to the multiplication. The set $\mathcal{B}_r$ is a basis of $Dist(T_r)$.

\subsection{$m=1$}

Denote by $G_m$ the one-dimensional torus. 

First, we consider the multiplicative structure of $Dist((G_m)_r)$ and write $x=x_1$.  

We will require the following combinatorial lemma. 
\begin{lm}\label{garrappa} Let $a>0$.
If $0\leq b<a$, then \[\sum_{0\leq r\leq a} (-1)^r \binom{a}{r}r^b =0.\]
If $b=a$, then \[\sum_{0\leq r\leq a} (-1)^r \binom{a}{r}r^a = (-1)^a a!\]
\end{lm}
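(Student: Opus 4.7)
The plan is to recognize the sum as, up to sign, the $a$-th forward difference of the polynomial $f(x) = x^b$ evaluated at $0$. Let $\Delta$ denote the operator $(\Delta f)(x) = f(x+1) - f(x)$. A routine induction on $a$ gives the standard expansion
$$(\Delta^a f)(x) = \sum_{r=0}^a (-1)^{a-r}\binom{a}{r} f(x+r).$$
Setting $x = 0$ and $f(x) = x^b$ therefore yields
$$\sum_{r=0}^a (-1)^r \binom{a}{r} r^b \;=\; (-1)^a \bigl(\Delta^a x^b\bigr)\Big|_{x=0},$$
and the lemma reduces to computing the right-hand side.

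For this, one uses the elementary fact that $\Delta$ sends a polynomial of degree $k \geq 1$ with leading coefficient $c$ to a polynomial of degree $k-1$ with leading coefficient $kc$, while $\Delta$ kills constants. A short induction on $k$ then gives $\Delta^k x^b = b(b-1)\cdots(b-k+1)\, x^{b-k} + (\text{lower order terms})$ for $k \leq b$, and $\Delta^k x^b = 0$ for $k > b$. Taking $k = a$: when $b < a$ the result is $0$, and when $b = a$ the result is the constant $a!$. Combined with the identity above and the factor $(-1)^a$, this gives both cases of the lemma.

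An equivalent route, if one prefers to avoid the difference calculus, is to compare coefficients of $t^b/b!$ on the two sides of the identity
$$(e^t - 1)^a \;=\; \sum_{r=0}^a (-1)^{a-r}\binom{a}{r} e^{rt}.$$
The left-hand side equals $t^a(1 + O(t))^a$, whose $t^b$-coefficient is $0$ for $b < a$ and $1$ for $b = a$; the right-hand side has $t^b/b!$-coefficient equal to $\sum_r (-1)^{a-r}\binom{a}{r} r^b$. There is no genuine obstacle in either approach; the only care required is the bookkeeping of the overall sign $(-1)^a$, which arises from the mismatch between the $(-1)^r$ appearing in the statement and the $(-1)^{a-r}$ that is natural to $\Delta^a$.
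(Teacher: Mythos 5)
Your proof is correct. It takes a genuinely different route from the paper's: the paper argues by direct induction on $a$, using the identity $\binom{a}{r}=\tfrac{a}{r}\binom{a-1}{r-1}$ to reduce the sum to one over $\binom{a-1}{r}$, reindexing, expanding $(r+1)^{b-1}$ by the binomial theorem, and invoking the inductive hypothesis term by term. You instead identify the sum with $(-1)^a(\Delta^a x^b)\big|_{x=0}$, where $\Delta$ is the forward-difference operator, and then use the degree-lowering property of $\Delta$; your alternative via extracting the $t^b$-coefficient of $(e^t-1)^a$ is the exponential-generating-function restatement of the same idea. Your approach is more conceptual and makes the vanishing for $b<a$ transparent at a glance (the $a$-th difference of a polynomial of degree less than $a$ is zero), at the cost of invoking a bit of difference-calculus machinery; the paper's induction is more self-contained and elementary, requiring only binomial identities, which is perhaps more in keeping with the hands-on combinatorial style of the rest of Section~1. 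Both are complete and both handle the sign bookkeeping correctly, so this is a matter of taste rather than of a gap.
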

\begin{proof}
The proof is given by induction.
The case $b=0$ or $a=1$ is clear.

Assume the statement is true for $a-1>0$ and consider $0<b\leq a$. Then 
\[\sum_{0\leq r\leq a} (-1)^r \binom{a}{r}r^b = \sum_{0<r\leq a} (-1)^r \binom{a}{r}r^b\] 
which equals \[a\sum_{0<r\leq a} (-1)^r \binom{a-1}{r-1}r^{b-1}\] because $\binom{a}{r}=\frac{a}{r}\binom{a-1}{r-1}$.
The last expression equals 
\[-a\sum_{0\leq r\leq a-1} (-1)^r \binom{a-1}{r}(r+1)^{b-1}=-a\sum_{0\leq j\leq b-1} \binom{b-1}{j} \sum_{0\leq r\leq a-1} (-1)^r \binom{a-1}{r}r^j\]
If $b<a$, then this sum vanishes by the inductive assumption.
If $b=a$, then it equals $(-a)(-1)^{a-1} (a-1)! = (-1)^a a!$.
\end{proof}

For $0\leq a<q$, denote 
\[X_a=\sum_{k=a}^{q-1} (-1)^{k-a}\binom{k}{a} \binom{x}{k}.\]

\begin{lm}\label{onedim}
We have $\binom{x}{i}X_i=X_i$ and $\binom{x}{j}X_i=0$ for $j>i$.
\end{lm}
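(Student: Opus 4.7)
The plan is to compute $\binom{x}{j}X_i$ directly via the multiplication formula (1), collect the result in the basis $\{\binom{x}{n}\}_{0 \leq n < q}$, and evaluate the resulting alternating sums using Lemma \ref{garrappa}.

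Substituting the definition of $X_i$ and reindexing $n = j + k - s$ in the product formula gives
\[
\binom{x}{j}\binom{x}{k} = \sum_{n = \max(j,k)}^{j+k}\binom{n}{k}\binom{k}{n-j}\binom{x}{n}.
\]
After collecting by $n$, applying the identity $\binom{n}{k}\binom{k}{i} = \binom{n}{i}\binom{n-i}{k-i}$, and setting $l = k - i$, the coefficient of $\binom{x}{n}$ in $\binom{x}{j}X_i$ takes the form
\[
\binom{n}{i}\sum_{l = 0}^{n-i}(-1)^{l}\binom{n-i}{l}\binom{l+i}{n-j}.
\]
The lower limit can be extended to $l = 0$ because $\binom{l+i}{n-j} = 0$ whenever $0 \leq l + i < n - j$.

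The core of the argument is that $\binom{l+i}{n-j}$ is a polynomial in $l$ of degree exactly $n - j$ with leading coefficient $1/(n-j)!$. Expanding into monomials $l^{b}$ and swapping summations, Lemma \ref{garrappa} with $a = n - i$ shows that $\sum_{l=0}^{n-i}(-1)^{l}\binom{n-i}{l}l^{b}$ vanishes for $b < n - i$ and equals $(-1)^{n-i}(n-i)!$ for $b = n - i$. When $j > i$, every monomial in the expansion satisfies $b \leq n - j < n - i$, so the coefficient of each $\binom{x}{n}$ vanishes and $\binom{x}{j}X_i = 0$. When $j = i$, only the top-degree term $b = n - i$ survives, contributing $(-1)^{n-i}\binom{n}{i}$, which is exactly the coefficient of $\binom{x}{n}$ in $X_i$; hence $\binom{x}{i}X_i = X_i$.

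The main technical care is the bookkeeping of summation ranges after the double reindexing, namely checking that $l$ can be taken uniformly over $\{0, \ldots, n-i\}$ regardless of the sign of $n - i - j$, and treating the degenerate case $n = i$ (where the hypothesis $a > 0$ of Lemma \ref{garrappa} fails) by direct inspection: the sum there collapses to the single term $l = 0$ giving the value $1$, matching the coefficient of $\binom{x}{i}$ in $X_i$.
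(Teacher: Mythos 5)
Your proposal is correct and follows essentially the same computation the paper uses: after expanding the product $\binom{x}{j}X_i$, collecting by the basis element $\binom{x}{n}$, and applying the identity $\binom{n}{k}\binom{k}{i}=\binom{n}{i}\binom{n-i}{k-i}$, both you and the paper arrive at the coefficient $\binom{n}{i}\sum_{l=0}^{n-i}(-1)^l\binom{n-i}{l}\binom{l+i}{n-j}$ (the paper writes this with $s=n-j$ and $r=l$), expand $\binom{l+i}{n-j}$ as a polynomial in $l$, and invoke Lemma \ref{garrappa}. Your explicit note about the degenerate case $n=i$ (where $a=n-i=0$ falls outside the hypothesis of Lemma \ref{garrappa} and one checks the single surviving term directly) is a small point of care that the paper glosses over but which is handled correctly here.
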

\begin{proof}
We have 
\[\binom{x}{j}X_i = \sum_{i\leq k <q} (-1)^{k-i} \sum_{0\leq t \leq {\min\{j,k\}}} \binom{k}{i}\binom{j+k-t}{j-t}\binom{k}{t} \binom{x}{j+k-t}\]
which equals 
\[\sum_{j+s<q} \sum_{i\leq k\leq j+s} (-1)^{k-i} \binom{k}{i}\binom{j+s}{k}\binom{k}{s} \binom{x}{j+s}\]
where we set $s=k-t$. Since 
$\binom{k}{i}\binom{j+s}{k}=\binom{j+s}{i}\binom{j+s-i}{k-i}$, we rewrite the last sum as
\[\sum_{j+s<q} \binom{j+s}{i} \sum_{0\leq r\leq j-i+s} (-1)^r \binom{j-i+s}{r}\binom{i+r}{s},\]
where we set $r=k-i$.

If we split $\binom{r+i}{s}= \sum_{l=0}^s c_{l,i} r^l$ for appropriate coefficients $c_{l,i}$ and 
observe that $c_{s,i}=\frac{1}{s!}$, then Lemma \ref{garrappa} implies that 
\[\sum_{0\leq r\leq t+s} (-1)^r \binom{t+s}{r}\binom{i+r}{s}=0\] for $t>0$
while 
\[\sum_{0\leq r\leq s} (-1)^r \binom{s}{r}\binom{i+r}{s} = (-1)^s\]
and  $\binom{x}{i}X_i=X_i$, $\binom{x}{j}X_i=0$ for $j>i$ follow.
\end{proof}

\begin{lm}\label{torus1}
For each $r>0$, $Dist((G_m)_r)$ is a separable algebra.
Elements $X_a$ as above form a complete set of primitive orthogonal idempotents in $Dist((G_m)_r)$ and 
$\sum\limits_{0\leq a <q} X_a =1$.
\end{lm}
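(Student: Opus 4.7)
The plan is to leverage Lemma \ref{onedim} (combined with the commutativity of $Dist((G_m)_r)$, which is immediate since $T$ is abelian) to verify the orthogonal-idempotent relations directly, use a short binomial identity to prove completeness, and then conclude primitivity and separability from a dimension count.

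First, I would check the relations $X_a X_b = \delta_{ab} X_a$. For the idempotent case $a=b$, expanding the left copy of $X_a$ gives $\sum_{k=a}^{q-1}(-1)^{k-a}\binom{k}{a}\binom{x}{k}X_a$; by Lemma \ref{onedim} every term with $k>a$ vanishes and the $k=a$ term collapses to $\binom{x}{a}X_a = X_a$. For the orthogonality case $a<b$, commutativity gives $X_a X_b = X_b X_a$, and expanding $X_b$ yields a sum of terms of the form $\binom{x}{k}X_a$ with $k\geq b>a$, each of which vanishes by Lemma \ref{onedim}.

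Next, to establish $\sum_{a=0}^{q-1} X_a = 1$ I would swap the order of summation:
\[
\sum_{a=0}^{q-1}X_a \;=\; \sum_{k=0}^{q-1}\binom{x}{k}\sum_{a=0}^{k}(-1)^{k-a}\binom{k}{a}.
\]
The inner sum equals $(1-1)^{k}$, which is $0$ for $k\geq 1$ and $1$ for $k=0$, so the whole expression collapses to $\binom{x}{0}=1$.

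Finally, the basis $\mathcal{B}_r$ established earlier in the section shows that $\dim_K Dist((G_m)_r)=q$. The $X_a$ are $q$ nonzero orthogonal idempotents summing to $1$ (nonzero because $\binom{x}{a}$ appears in the defining expansion with coefficient $1$ and the $\binom{x}{k}$ are linearly independent), so the decomposition $Dist((G_m)_r)=\bigoplus_{a=0}^{q-1} X_a\,Dist((G_m)_r)$ expresses a $q$-dimensional space as a sum of $q$ nonzero ideals. Each summand must therefore be one-dimensional and equal to $KX_a \cong K$, which forces each $X_a$ to be primitive and identifies $Dist((G_m)_r)$ with $K^{q}$, hence separable. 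I do not anticipate a substantial obstacle: Lemma \ref{onedim} has already absorbed the nontrivial combinatorics, so what remains is a routine binomial identity plus a dimension count.
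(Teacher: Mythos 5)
Your proof is correct and follows essentially the same route as the paper: verify idempotence and orthogonality via Lemma \ref{onedim}, show $\sum_a X_a=1$ by swapping the order of summation and invoking $(1-1)^k$, and deduce primitivity and separability from the dimension count $\dim Dist((G_m)_r)=q$. You merely spell out the idempotent/orthogonality checks and the nonvanishing of each $X_a$, which the paper leaves as an implicit consequence of Lemma \ref{onedim}.
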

\begin{proof}
The proof follows from Lemma \ref{onedim} and  
\[\sum_{0\leq a <q} X_a =\sum_{0\leq a<q} \sum_{a\leq k <q} (-1)^{k-a}\binom{k}{a} \binom{x}{k}
=\sum_{0\leq k<q} \sum_{0\leq a\leq k} (-1)^{k-a} \binom{k}{a} \binom{x}{k}=1,\]
since the dimension of $Dist((G_m)_r)$ is $q=p^r$, and it equals the number of $X_a$.
\end{proof}

\subsection{$m\geq 1$}

For each $0\leq a<q$ denote by 
\[X_{i,a}=\sum_{k=a}^{q-1} (-1)^{k-a}\binom{k}{a} \binom{x_i}{k}\]
and 
\[h_a=h_{a_1, \ldots, a_m}= \prod_{i=1}^m X_{i,a_i}.\]

The multiplicative structure of $Dist(T_r)$ is described in the following lemma.

\begin{pr}\label{torusmn}
For each $r$, $Dist(T_r)$ is a separable algebra. In fact, every $h_{a_1, \ldots, a_m}$ is a primitive idempotent, and they together form a complete orthogonal decomposition of unity.
\end{pr}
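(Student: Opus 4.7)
The plan is to reduce to the one-dimensional case established in Lemma \ref{torus1} by exploiting the tensor-product structure of $Dist(T_r)$. First I would observe that, by the commutation relations from Section 1, the subalgebras $A_i \subseteq Dist(T_r)$ generated by $\binom{x_i}{k}$ for $0 \leq k < q$ pairwise commute, and each is a copy of $Dist((G_m)_r)$ under an isomorphism sending $\binom{x_i}{k} \mapsto \binom{x}{k}$ and hence $X_{i,a} \mapsto X_a$. The multiplication map $A_1 \otimes_K \cdots \otimes_K A_m \to Dist(T_r)$ carries the tensor product of the one-variable bases onto $\mathcal{B}_r$, so it is a $K$-algebra isomorphism.

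Next, within each factor $A_i$ I would verify $X_{i,a} X_{i,b} = \delta_{a,b} X_{i,a}$ directly from Lemma \ref{onedim}. For $b > a$, commutativity of $A_i$ gives
\[
X_{i,a} X_{i,b} = X_{i,b} X_{i,a} = \sum_{k=b}^{q-1}(-1)^{k-b}\binom{k}{b}\binom{x_i}{k} X_{i,a} = 0,
\]
since $k \geq b > a$ forces $\binom{x_i}{k} X_{i,a} = 0$; idempotency is the case $a = b$, where only the $k = a$ term survives. Consequently $h_a h_b = \prod_i X_{i,a_i} X_{i,b_i} = \delta_{a,b}\, h_a$, and
\[
\sum_{0 \leq a_1, \ldots, a_m < q} h_{a_1, \ldots, a_m} = \prod_{i=1}^m \Bigl( \sum_{0 \leq a_i < q} X_{i,a_i} \Bigr) = 1
\]
by Lemma \ref{torus1}, so the $h_a$ already form a complete orthogonal system of idempotents.

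Finally, Lemma \ref{torus1} exhibits $A_i \cong K^q$ as $K$-algebras, with $X_{i,a}$ the primitive idempotent singling out the $a$-th factor; tensoring yields $Dist(T_r) \cong K^{q^m}$, in which $h_a$ is the primitive idempotent indexed by the tuple $a=(a_1,\ldots,a_m)$. This delivers primitivity of each $h_a$ and separability of $Dist(T_r)$ in one stroke. The only nontrivial content sits in Lemma \ref{onedim}; everything else is formal, and primitivity, which could otherwise require a separate argument, follows for free once the tensor decomposition is in place.
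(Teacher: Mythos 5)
Your proof follows the same route as the paper's: recognize $Dist(T_r)$ as the tensor product of $m$ copies of $Dist((G_m)_r)$ and invoke Lemma \ref{torus1} for each factor. You simply spell out the details (orthogonality of the $X_{i,a}$'s, the sum-to-one computation, and the $K^q$ identification) that the paper's one-line proof leaves implicit.
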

\begin{proof}
Since $Dist(T_r)$ is a direct product of $m$ copies of $Dist((G_m)_r)$, 
the claim follows from Lemma \ref{torus1}.
\end{proof}

\section{Central elements in $Dist(GL(m|n))$}
We follow the notations from \cite{zubmar}.
Fix an odd element $e_{ij}\in gl(m|n)$ such that $1\leq i\leq m < j\leq m+n$. 
Let $\lambda=(\lambda_1, \ldots, \lambda_{m+n})$ be a weight of $G=GL(m|n)$. 
Denote the binomial elements $\binom{e}{\lambda}=\prod_{i=1}^{m+n} \binom{e_{ii}}{\lambda_i}$ and divided powers elements $e_{ij}^{(t)}=\frac{e^t_{ij}}{t!}$ for $t\geq 0$.
\begin{lm}\label{commutators} For $1\leq i\leq m < j\leq m+n$ and $t>0$ there are the following commutator formulas.
\begin{enumerate}
\item If $1\leq k\leq m< l\leq m+n$, then $[e_{kl}, e_{ij}]=0$;
\item If $1\leq k\neq l\leq m$ or $m+1\leq k\neq l \leq m+n$, then 
\[[e_{kl}^{(t)}, e_{ij}]=\delta_{il}e_{kl}^{(t-1)}e_{k j}-\delta_{k j}e^{(t-1)}_{kl}e_{il}=\delta_{il}e_{kj}e_{kl}^{(t-1)}-\delta_{k j}e_{il}e^{(t-1)}_{kl};\]
\item If $1\leq l\leq m < k\leq m+n$, then $[e_{kl}, e_{ij}]=\delta_{il}e_{kj}+\delta_{kj}e_{il}$.
\end{enumerate}
\end{lm}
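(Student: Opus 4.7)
The plan is to derive each formula from the matrix-unit multiplication rules for $\mathfrak{gl}(m|n)$, supplemented by a standard divided-power identity in case (2). Recall that $e_{st}$ is odd precisely when $s$ and $t$ lie on opposite sides of $m$, and in that case the superbracket with another odd element is the anticommutator.

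For (1), both $e_{kl}$ and $e_{ij}$ are odd. The matrix-unit products are $e_{kl}e_{ij}=\delta_{li}e_{kj}$ and $e_{ij}e_{kl}=\delta_{jk}e_{il}$, so $[e_{kl},e_{ij}]=\delta_{li}e_{kj}+\delta_{jk}e_{il}$. The constraints $l>m\geq i$ and $j>m\geq k$ force both Kronecker deltas to vanish. For (3), both elements are again odd, but now $l\leq m$ can equal $i\leq m$ and $k>m$ can equal $j>m$, so both terms can survive, producing exactly $\delta_{il}e_{kj}+\delta_{kj}e_{il}$.

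For (2), $e_{kl}$ is even, hence $e_{kl}^{(t)}$ is even, so the superbracket with the odd element $e_{ij}$ coincides with the ordinary commutator. At $t=1$ the familiar Lie-algebra relation reads $[e_{kl},e_{ij}]=\delta_{il}e_{kj}-\delta_{kj}e_{il}$. Under the hypotheses of (2) at most one delta can be non-zero: the subcase $k,l\leq m$ leaves $c:=e_{kj}$ (when $i=l$), while the subcase $k,l\geq m+1$ leaves $c:=-e_{il}$ (when $k=j$). In each subcase a direct matrix check shows that both $e_{kl}c$ and $ce_{kl}$ vanish, so in particular $[e_{kl},c]=0$. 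The elementary identity
\[[a^{(t)},b]=a^{(t-1)}[a,b]=[a,b]\,a^{(t-1)}\qquad\text{whenever }[a,[a,b]]=0,\]
obtained by a short induction on $t$ from $a^{t}b=ba^{t}+t[a,b]\,a^{t-1}$ followed by division by $t!$, then delivers both forms of the stated formula.

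The only delicate point is the bookkeeping of signs and parities: one must verify that using the ordinary commutator in case (2) is legitimate (it is, because $e_{kl}$ is even) and that no hidden sign appears when moving $c$ past powers of $e_{kl}$. Both rest on the single observation that $c$ and $e_{kl}$ commute as honest elements of $Dist(GL(m|n))$, which is visible directly from their matrix representatives. Cases (1) and (3) are then essentially pure matrix computation.
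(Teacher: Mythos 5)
The paper states Lemma~\ref{commutators} without proof; the authors evidently regard it as a standard computation. Your argument supplies a correct, self-contained verification. Cases (1) and (3) really are just matrix-unit bookkeeping with the anticommutator: from $e_{kl}e_{ij}=\delta_{li}e_{kj}$ and $e_{ij}e_{kl}=\delta_{jk}e_{il}$, the index constraints ($l>m\geq i$ and $j>m\geq k$ in case (1); $l\leq m$, $k>m$ in case (3)) decide which Kronecker deltas can survive, and you read them off correctly. In case (2) your reduction to the divided-power identity $[a^{(t)},b]=a^{(t-1)}[a,b]=[a,b]\,a^{(t-1)}$ under the hypothesis $[a,[a,b]]=0$ is the right move; the side conditions force at most one of $\delta_{il},\delta_{kj}$ to be nonzero, so $[e_{kl},e_{ij}]$ is one of $0$, $e_{kj}$, or $-e_{il}$, and in the nontrivial subcases the condition $k\neq l$ together with the parity constraints makes both products $e_{kl}\cdot c$ and $c\cdot e_{kl}$ vanish, hence $[e_{kl},c]=0$. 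Since $e_{kl}$ is even, the superbracket with $c$ is the ordinary commutator and no hidden sign enters, which is exactly the point you flag; and the identity $a^tb-ba^t=t[a,b]a^{t-1}$ holds integrally (your induction), so dividing by $t!$ is legitimate in the Kostant $\mathbb{Z}$-form and descends to characteristic $p$. This yields both displayed forms of the answer. The proof is sound.
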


Using PBW theorem, write the basis elements of $Dist(G)$ as products in the order such that odd factors $e_{uv}$, where $u> v$ appear first; followed by even factors
$e_{uv}^{t_{uv}}$ where $u>v$; basis elements $\binom{e}{\mu}$ of $Dist(T)$; even factors $e_{uv}^{d_{uv}}$ where $u<v$; and odd factors $e_{uv}$ where $u<v$ appearing last.
The basis elements $p_{t,\mu,d}$ of $Dist(G)$ can be written in the form 
\[\prod_{1\leq l\leq m< k\leq m+n}e_{kl}^{t_{kl}}
\prod_{\substack{1\leq l< k\leq m\\ m+1\leq l< k\leq m+n}}e_{kl}^{(t_{kl})}
\binom{e}{\mu}
\prod_{\substack{1\leq k< l\leq m\\m+1\leq k<l\leq m+n}}e_{kl}^{(d_{kl})}
\prod_{1\leq k\leq m< l\leq m+n}e_{kl}^{d_{kl}}.\]

Assume that a central element $c$ of $Dist(G)$ has the form
\begin{equation}\label{cc}
c=\sum_{\lambda} a_{\lambda}\binom{e}{\lambda}+\sum_{t, \mu, d, \sum t_{kl}+\sum d_{kl}>0}a_{t, \mu, d}p_{t,\mu,d}.\end{equation}
It is easy to see that the image $h(c)$ of $c$ under the Harish-Chandra morphism $h$ is
\[h(c)=\sum_{\lambda} a_{\lambda}\binom{e}{\lambda}.\]

The algebraic supergroup $GL(m|n)$ contains a group subfunctor $\Sigma$ such that for every $A\in\mathsf{SAlg}_K$ the group $\Sigma(A)$ consists of matrices
\[\left(\begin{array}{cc}
A_{\sigma} & 0 \\
0 & B_{\tau}\end{array}\right),\]
where $A_{\sigma}$ is a monomial matrix corresponding to a substitution $\sigma\in\Sigma_m$, and $B_{\tau}$ is a monomial matrix corresponding to a substitution $\tau\in\Sigma_n$, respectively. In particular, $\Sigma(A)\simeq \Sigma_m\times\Sigma_n$. Since the supergroup $GL(m|n)$ acts on its Lie superalgebra $gl(m|n)$ by conjugations, the group subfunctor $\Sigma$ acts on $gl(m|n)$ in such a way that $(\sigma, \tau)\in \Sigma_m\times\Sigma_n\simeq \Sigma(A)$ maps $e_{ij}$ to $e_{\sigma(i), \tau(j)}$.
Therefore, each $h(c)$ is symmetric in $e_{11}, \ldots, e_{mm}$ and $e_{m+1, m+1}, \ldots , e_{m+n, m+n}$.

Using Lemma 7.7 from \cite{zubmar}, we obtain
\[[c, e_{ij}]=\sum_{\lambda} a_{\lambda}(\binom{e}{\lambda}-\binom{e-\epsilon_i+\epsilon_j}{\lambda})e_{ij}+[c-h(c), e_{ij}]=0 .\]

\begin{pr}\label{star}
Let $c$ be a central element of $Dist(G)$ written in the form (\ref{cc}). Then for every $1\leq i\leq m < j\leq m+n$ there is
\begin{equation}\tag{$\star_{ij}$} \ \sum_{\lambda} a_{\lambda}(\binom{e}{\lambda}-\binom{e-\epsilon_i+\epsilon_j}{\lambda})=(e_{ii}+e_{jj})f,
\end{equation}
for some $f\in Dist(T)$. 
\end{pr}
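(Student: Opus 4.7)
From the commutator identity displayed just before the statement, the vanishing $[c,\,e_{ij}]=0$ rearranges as
\[
F(e)\,e_{ij} = -\,[R,\,e_{ij}], \qquad R := c-h(c),
\]
with $F(e) := \sum_{\lambda} a_{\lambda}\bigl(\binom{e}{\lambda}-\binom{e-\epsilon_i+\epsilon_j}{\lambda}\bigr)\in Dist(T)$, which is exactly the element we need to exhibit as a multiple of $e_{ii}+e_{jj}$. My plan is to apply a PBW projection that isolates the coefficient of $e_{ij}$, and then show that only one very restricted family of terms in $R$ survives, every one of them producing an explicit factor $e_{ii}+e_{jj}$.

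Concretely, let $\pi:Dist(G)\to Dist(T)\cdot e_{ij}$ denote the $K$-linear projection along the PBW basis onto the span of basis elements of the form $\binom{e}{\nu}\,e_{ij}$. Because $F(e)\,e_{ij}$ already lies in this span, applying $\pi$ gives $F(e)\,e_{ij} = -\sum_{t,\mu,d} a_{t,\mu,d}\,\pi([p_{t,\mu,d},\,e_{ij}])$. The key reduction is the claim: $\pi([p,\,e_{ij}])$ is nonzero only for PBW monomials of the very restricted shape $p = e_{ji}\binom{e}{\mu}\,e_{ij}$ (i.e. $t_{ji} = d_{ij} = 1$ and every other off-diagonal exponent is zero), and for such $p$ a short computation using $e_{ij}^{2}=0$ together with the super-anticommutator $\{e_{ji},e_{ij}\} = e_{ii}+e_{jj}$ gives
\[
[\,e_{ji}\binom{e}{\mu}e_{ij},\,e_{ij}\,] \;=\; -(e_{ii}+e_{jj})\binom{e}{\mu}\,e_{ij}.
\]
Assembling the surviving contributions yields $F(e)\,e_{ij} = (e_{ii}+e_{jj})\,f\,e_{ij}$ with $f := \sum_{\mu} a_{e_{ji}\binom{e}{\mu}e_{ij}}\,\binom{e}{\mu}\in Dist(T)$. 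Linear independence of the PBW monomials $\binom{e}{\nu}\,e_{ij}$ makes right multiplication by $e_{ij}$ injective on $Dist(T)$, so cancelling $e_{ij}$ delivers $F(e) = (e_{ii}+e_{jj})\,f$.

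The main obstacle is the vanishing half of the key claim. By Lemma~\ref{commutators}, for $(k,l)\neq(j,i)$ the super-commutator $[e_{kl}^{(t)},\,e_{ij}]$ is either zero or yields factors $e_{kj}$, $e_{il}$ which are off-diagonal generators distinct from $e_{ij}$; the only commutator that collapses to a purely toral element is $[e_{ji},\,e_{ij}] = e_{ii}+e_{jj}$. Hence if a PBW monomial $p$ contains any off-diagonal factor outside $\{e_{ji},e_{ij}\}$, each term in the PBW expansion of $[p,\,e_{ij}]$ still carries a nontrivial off-diagonal factor different from $e_{ij}$, and therefore lies in $\ker\pi$. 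When the off-diagonal factors of $p$ are confined to $\{e_{ji},e_{ij}\}$, only four shapes arise: the purely toral one is in $h(c)$ and absent from $R$; the shape $\binom{e}{\mu}e_{ij}$ has commutator $0$ by $e_{ij}^{2}=0$; the shape $e_{ji}\binom{e}{\mu}$ produces only terms of the form $e_{ji}(\cdots)e_{ij}$ plus purely toral terms, both killed by $\pi$; only the shape $e_{ji}\binom{e}{\mu}e_{ij}$ contributes. Carrying out this case analysis while correctly tracking PBW reorderings and super-commutator signs is the principal technical task.
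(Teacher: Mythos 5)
Your overall plan coincides with the paper's own: isolate the $Dist(T)\,e_{ij}$--component of $[c-h(c), e_{ij}]$, identify the unique PBW shape $e_{ji}\binom{e}{\mu}e_{ij}$ that can contribute, and then read off the factor $e_{ii}+e_{jj}$ from the super-anticommutator $\{e_{ji},e_{ij}\}=e_{ii}+e_{jj}$ after using $e_{ij}^2=0$. The four allowed shapes with off-diagonal factors confined to $\{e_{ji},e_{ij}\}$ are correctly enumerated, and your one-line verification $[e_{ji}\binom{e}{\mu}e_{ij},\,e_{ij}]=-(e_{ii}+e_{jj})\binom{e}{\mu}e_{ij}$, and the cancellation of $e_{ij}$ via PBW linear independence, are sound.

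The gap is in the vanishing half of the key claim, which you correctly flag as ``the principal technical task'' but then do not carry out. Your justification rests on the observation that each elementary commutator $[e_{kl}^{(t)},e_{ij}]$ with $(k,l)\neq(j,i)$ either vanishes or produces an off-diagonal factor $e_{kj}$ or $e_{il}$ distinct from $e_{ij}$. That does not by itself imply ``each term in the PBW expansion of $[p,e_{ij}]$ still carries a nontrivial off-diagonal factor different from $e_{ij}$.'' Writing $[p,e_{ij}]$ in the PBW basis requires reordering, and reordering can make newly created off-diagonal factors collide with factors already present in $p$: a newly created $e_{uv}$ can meet an $e_{vu}$ during the reordering and collapse into a toral element, potentially wiping out all off-diagonal factors. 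This is exactly the phenomenon the paper's proof has to rule out. In the paper this is handled by fixing $i=m$, $j=m+1$, propagating the new odd-upper factor through the even-upper block one factor at a time, and observing inductively that at every step there is a unique summand ending in $e_{m,m+1}$ while every other summand ends in an odd-upper factor $e_{uv}$ with $u<m$ or $v>m+1$; the potential pairing is then killed by the identity $[e_{uv},e_{rs}]=0$ for odd-upper $e_{uv},e_{rs}$, the two-sidedness of $Dist(U^-)^+$ in the type (1) analysis, and $e_{ij}^2=0$. Without some version of this inductive tracking, the vanishing claim is only an assertion, so the proposal as written is incomplete precisely where the proposition's content actually lies.
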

\begin{proof}
By Lemma \ref{commutators}, $[c-h(c), e_{ij}]$ equals a sum of scalar multiples of elements $q_{t, \mu, d}$ obtained from elements
$p_{t, \mu, d}$
by replacing only one factor $e_{kl}^{t_{kl}}$, $e_{kl}^{(t_{kl})}, \binom{e}{\mu}, e_{kl}^{(d_{kl})}$ or $e_{kl}^{(d_{kl})}$ by
\\
1) $[e_{kl}, e_{ij}]$ or $e_{kl}^{(t_{kl}-1)}[e_{kl}, e_{ij}]$ if $l<k$; \\
2) $(\binom{e}{\mu}-\binom{e-\epsilon_i+\epsilon_j}{\lambda})e_{ij}$;\\
3) $e_{kl}^{(d_{kl}-1)}[e_{kl}, e_{ij}]$ or $[e_{kl}, e_{ij}]$ if $k<l$; respectively.

Set $i=m, j=m+1$. We claim that the only summands $p$ of $c-h(c)$, such that $[p, e_{m, m+1}]$ has a non-zero component $q$ that belong to $Dist(T)e_{m, m+1}$, are
\[p=e_{m+1, m}\binom{e}{\mu}e_{m, m+1}.\]

Consider a summand $q$ of the type (2). Then our claim is clear if $\sum t_{kl}>0$.
Assume $\sum t_{kl}=0$, and therefore $\sum d_{kl}> 0$. 

First assume that in the expression for $p$, 
there is at least one even factor $e_{kl}^{(d_{kl})}$, where $d_{kl}>0$.
Since $|e_{kl}|=0$, 
\[e_{m, m+1}e_{kl}^{(d_{kl})}=e_{kl}^{(d_{kl})}e_{m, m+1}-\delta_{lm}e_{kl}^{(d_{kl}-1)}e_{k, m+1}+
\delta_{k, m+1}e_{kl}^{(d_{kl}-1)}e_{ml},\]
where either $k< m$ provided $l=m$, or $l> m+1$ provided $k=m+1$.

Assume there is another even factor $e_{uv}^{(d_{uv})}$, where $d_{uv}>0$, following $e_{kl}^{(d_{kl})}$ in the expression for $p$.
If $l=m$, then
\[e_{k, m+1}e_{uv}^{(d_{uv})} = e_{uv}^{(d_{uv})}e_{k, m+1}-\delta_{vk}e_{uv}^{(d_{uv}-1)}e_{u, m+1}+\delta_{m+1, u}e^{(d_{uv}-1)}_{uv}e_{kv},\]
where $u< m$ provided $v=k$. 

Analogously, if $k=m+1$, then
\[e_{ml}e_{uv}^{(d_{uv})}=e_{uv}^{(d_{uv})}e_{ml}-\delta_{vm}e_{uv}^{(d_{uv}-1)}e_{ul}+
\delta_{ul}e_{uv}^{(d_{uv}-1)}e_{mv},\]
where $v> m+1$ provided $u=l$. 

Continuing this process, one quickly sees that after each step there is a unique summand ending in $e_{m,m+1}$ and all other summands are
ending with odd factors $e_{uv}$, where $1\leq u\leq m<v \leq m+n$ and either $u<m$ or $v>m+1$.

Therefore all nonzero summands corresponding to the rewritten 
expression for $e_{m,m+1}\prod_{\substack{1\leq k< l\leq m\\ m+1\leq k<l\leq m+n}} e_{kl}^{(d_{kl})}$ 
are either $\prod_{\substack{1\leq k< l\leq m\\ m+1\leq k<l\leq m+n}} e_{kl}^{(d_{kl})}e_{m,m+1}$ or 
end with one of the factors $e_{uv}$, where $1\leq u\leq m<v \leq m+n$ and either $u<m$ or $v>m+1$.

Since $[e_{uv}, e_{rs}]=0$ when $1\leq u,r\leq m<v,s\leq m+n$, when we multiply summands from 
$e_{m,m+1}\prod_{\substack{1\leq k< l\leq m\\ m+1\leq k<l\leq m+n}} e_{kl}^{(d_{kl})}$ 
by odd elements $\prod_{1\leq k\leq m< l\leq m+n}e_{kl}^{d_{kl}}$ we either get zero if one of the odd elements is repeated or otherwise, they are linear combinations of
elements of our basis of $Dist(G)$ that do not belong to $Dist(T)e_{m,m+1}$.

Now assume that in the expression for $p$, there is no even factor $e_{kl}^{(d_{kl})}$, where $d_{kl}>0$. Then $q$ is either zero, or otherwise contains 
at least two different odd factors of type $e_{u,v}$ where $u<v$ and therefore does not belong to $Dist(T)e_{m,m+1}$.

We leave it for the reader to verify that similar arguments work for summands $q$ of type (3).

Now, consider a summand $q$ of the type (1). The only non-zero summands correspond to $e_{k, m}$, where $k\geq m+1$ and $e_{m+1, l}$, where $l\leq m$. 
If $k> m+1$, then in the monomial $p_{t, \mu, d}$ the factor $e_{k,m}$ will be replaced by the even element $e_{k, m+1}$. Since $Dist(U^-)^+$ is a 
two-sided (super)ideal of $Dist(U^-)$, the result of such substitution belongs to $Dist(U^-)^+ x$, where
$x=\prod_{\substack{1\leq k< l\leq m\\m+1\leq k<l\leq m+n}}e_{kl}^{(d_{kl})}
\prod_{1\leq k\leq m< l\leq m+n}e_{kl}^{d_{kl}}$. Thus the claim follows. Analogous arguments work if $l< m$.

Therefore it remains to consider the case $k=m+1, l=m$. In this case, in the monomial $p_{t, \mu, d}$ the factor $e_{m+1, m}$ will be replaced by the element $e_{mm}+e_{m+1, m+1}$. 
If $\sum_{1\leq l< k\leq m+n}t_{kl}> 1$, then the claim is again evident. 

Otherwise, $\prod_{1\leq l\leq m< k\leq m+n}e_{kl}^{t_{kl}}
\prod_{\substack{1\leq l< k\leq m\\ m+1\leq l< k\leq m+n}}e_{kl}^{(t_{kl})}=e_{m+1, m}$. Then $q$ belongs to $Dist(T)e_{m,m+1}$ only if
$\prod_{\substack{1\leq k< l\leq m\\m+1\leq k<l\leq m+n}}e_{kl}^{(d_{kl})}
\prod_{1\leq k\leq m< l\leq m+n}e_{kl}^{d_{kl}}=e_{m, m+1}$.

Consequently,
\[\sum_{\lambda} a_{\lambda}(\binom{e}{\lambda}-\binom{e-\epsilon_m+\epsilon_{m+1}}{\lambda})e_{m, m+1}-\]\[\sum_{\mu}a_{t, \mu, d}(e_{mmm}+e_{m+1, m+1})
\binom{e}{\mu}e_{m, m+1}=0,\]
where $t_{kl}=1$ if $k=m+1, l=m$ and $t_{kl}=0$ otherwise, and symmetrically, $d_{kl}=1$ if $k=m, l=m+1$ and $d_{kl}=0$ otherwise.

This implies
\[\sum_{\lambda} a_{\lambda}(\binom{e}{\lambda}-\binom{e-\epsilon_m+\epsilon_{m+1}}{\lambda})=\]\[(e_{mm}+e_{m+1, m+1})\sum_{\mu}a_{t, \mu, d}
\binom{e}{\mu}=(e_{mm}+e_{m+1, m+1})x\]
for some $x\in Dist(T)$.

Since $h(c)$ is symmetric in both $e_{ii}$ and $e_{jj}$ for $1\leq i\leq m$ and $m+1\leq j\leq m+n$, the identity $(\star_{ij})$ holds for any $1\leq i\leq m< j \leq m+n$.
\end{proof}

\begin{lm}
The condition $(\star)_{ij}$ is equivalent to the system of equations
\[-\sum_{\substack{\pi_k=\lambda_k \mbox{ for } k\neq i, j\\ \lambda_i\leq\pi_i \\ \lambda_j\leq\pi_j\leq\lambda_j+1 \\ \pi_i\neq\lambda_i \ \mbox{or} \ \pi_j\neq\lambda_j}} (-1)^{\pi_i-\lambda_i}a_{\pi}=(\lambda_i+\lambda_j)b_{\lambda}+\lambda_i b_{\lambda-\epsilon_i}+\lambda_j b_{\lambda-\epsilon_j},\]
where $\lambda$ runs over all $(m+n)$-dimensional vectors with non-negative integer entries.
\end{lm}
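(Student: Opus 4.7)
The plan is to expand both sides of the identity $(\star_{ij})$ in the PBW basis $\{\binom{e}{\lambda}\}$ of $Dist(T)$, write $f=\sum_\mu b_\mu \binom{e}{\mu}$, and match the coefficient of each $\binom{e}{\lambda}$ on the two sides. Since $\{\binom{e}{\lambda}\}$ is a basis, the existence of such an $f$ is equivalent to the family of scalar equations stated in the lemma, so passing to coordinates is legitimate in both directions.

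For the right-hand side, I would apply the product formula $x_i\binom{x_i}{k}=(k+1)\binom{x_i}{k+1}+k\binom{x_i}{k}$ recorded in Section~1. Multiplying $f$ on the left by $e_{ii}$ and reindexing $\mu_i=\lambda_i-1$ gives $e_{ii}f=\sum_\lambda \lambda_i(b_{\lambda-\epsilon_i}+b_\lambda)\binom{e}{\lambda}$, and the analogous formula with $i$ replaced by $j$ holds for $e_{jj}f$. Summing, the coefficient of $\binom{e}{\lambda}$ in $(e_{ii}+e_{jj})f$ is precisely $(\lambda_i+\lambda_j)b_\lambda+\lambda_ib_{\lambda-\epsilon_i}+\lambda_j b_{\lambda-\epsilon_j}$, which matches the right-hand side of the asserted system.

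For the left-hand side, I factor $\binom{e-\epsilon_i+\epsilon_j}{\pi}=\binom{e_{ii}-1}{\pi_i}\binom{e_{jj}+1}{\pi_j}\prod_{k\neq i,j}\binom{e_{kk}}{\pi_k}$ and expand the two shifted binomials by Pascal-type identities: $\binom{x+1}{k}=\binom{x}{k}+\binom{x}{k-1}$ and, by iterating Pascal in the other direction, $\binom{x-1}{k}=\sum_{s=0}^{k}(-1)^s\binom{x}{k-s}$. Consequently the coefficient of $\binom{e}{\lambda}$ in $\binom{e-\epsilon_i+\epsilon_j}{\pi}$ is nonzero only when $\pi_k=\lambda_k$ for $k\neq i,j$, $\pi_i\geq\lambda_i$ (contributing the sign $(-1)^{\pi_i-\lambda_i}$), and $\pi_j\in\{\lambda_j,\lambda_j+1\}$ (contributing the sign $1$). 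The unique term $\pi=\lambda$ produces $a_\lambda$, which cancels the $a_\lambda$ coming from the $\binom{e}{\pi}$ piece, so the coefficient of $\binom{e}{\lambda}$ on the left of $(\star_{ij})$ is exactly the negative sum that appears in the statement. The whole argument is routine bookkeeping; the only mild hazard is keeping the signs and the restricted ranges on $(\pi_i,\pi_j)$ straight and remembering to isolate and cancel the $\pi=\lambda$ contribution, so no deeper obstacle is expected.
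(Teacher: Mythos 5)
Your proposal is correct and follows essentially the same path as the paper: both sides of $(\star_{ij})$ are expanded in the basis $\{\binom{e}{\lambda}\}$ of $Dist(T)$, the right side via $x\binom{x}{k}=(k+1)\binom{x}{k+1}+k\binom{x}{k}$, and the left side via the Pascal-type expansions of $\binom{e_{ii}-1}{\pi_i}$ and $\binom{e_{jj}+1}{\pi_j}$ (which is precisely the identity the paper quotes from the proof of Proposition 8.3 of the reference), after which the $\pi=\lambda$ term cancels and coefficients are matched. The bookkeeping, signs, and ranges all agree with the paper's argument.
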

\begin{proof}
Generalizing the formula in the proof of Proposition 8.3 in \cite{zubmar} we obtain
\[\binom{e-\epsilon_i+\epsilon_j}{\lambda}=\sum_{\substack{\beta_k=\lambda_k \mbox{ for } k\neq i, j\\ 0\leq\beta_i\leq\lambda_i\\ \lambda_j-1\leq\beta_j\leq\lambda_j}}(-1)^{\lambda_i-\beta_i}\binom{e}{\beta}.\]
Therefore the left-hand side of $(\star_{ij})$ equals
\[-\sum_{\lambda}a_{\lambda}(\sum_{\substack{\beta_k=\lambda_k \mbox{ for } k\neq i, j\\ 0\leq\beta_i\leq\lambda_i\\ \lambda_j-1\leq\beta_j\leq\lambda_j\\
\beta_i\neq\lambda_i \ \mbox{or} \ \beta_j\neq\lambda_j}}(-1)^{\lambda_i-\beta_i}\binom{e}{\beta})\]
which can be rewritten as
\[-\sum_{\lambda}(\sum_{\substack{\pi_k=\lambda_k \mbox{ for } k\neq i, j\\ \lambda_i\leq\pi_i\\ \lambda_j\leq\pi_j\leq\lambda_j+1\\
\pi_i\neq\lambda_i \ \mbox{or} \ \pi_j\neq\lambda_j}} (-1)^{\pi_i-\lambda_i}a_{\pi})\binom{e}{\lambda}.\]
Denote by $(e_{ii}+e_{jj})\sum_{\lambda}b_{\lambda}\binom{e}{\lambda}$ the right-hand side of $(\star)_{ij}$. 
It equals
\[\sum_{\lambda}b_{\lambda}((\lambda_i+1)\binom{e}{\lambda+\epsilon_i}+\lambda_i\binom{e}{\lambda}+(\lambda_j+1)\binom{e}{\lambda+\epsilon_j}+\lambda_j\binom{e}{\lambda}).\]

The last expression can be rewritten as
\[\sum_{\lambda}[\lambda_i(b_{\lambda}+b_{\lambda-\epsilon_i})+\lambda_j(b_{\lambda}+b_{\lambda-\epsilon_j})]\binom{e}{\lambda},\]
where the coefficient $b_{\lambda-\epsilon_i}$ or $b_{\lambda-\epsilon_j}$ is zero if $\lambda_i=0$ or $\lambda_j=0$, respectively. 
One can transform it as
\[\sum_{\lambda}[(\lambda_i+\lambda_j)b_{\lambda}+\lambda_i b_{\lambda-\epsilon_i}+\lambda_j b_{\lambda-\epsilon_j}]\binom{e}{\lambda}.\]

Comparison of the above expressions yields the stated system of equations.
\end{proof}
Symmetrically, one can record the system of equations, as above, of type $(\star)_{ji}$.


\section{Supersymmetric elements}

Let $f\in Dist(T)$. Denote by $s_{ij}(f)$ the element obtained from $f$ after replacing every $x_i$ by $x_i-1$ and every $y_j$ by $y_j+1$ and by $\Phi_{ij}$ the difference operator sending 
$f$ to $f-s_{ij}(f)$. Explicitly, 
\[\begin{aligned}&\Phi_{ij}(f(x_1, \ldots, x_m,y_1, \ldots, y_n))\\
&=f(x_1, \ldots, x_i-1, \ldots, x_m,y_1, \ldots, y_j+1, \ldots, y_n)-f(x_1, \ldots, x_m,y_1, \ldots, y_n)
\end{aligned}\]

\begin{df}
We say that $f\in Dist(T)$ satisfies the condition $(\dagger_{ij})$ if there is $f_{ij}'\in Dist(T)$ such that 
\[\tag{$\dagger_{ij}$} \Phi_{ij}(f)=(x_i+y_j)f_{ij}'.\]

If $f$ is symmetric with respect to the terms $x_1, \ldots, x_m$ and $y_1, \ldots, y_n$ separately, and it satisfies all conditions $(\dagger_{ij})$ for every $i$ and $j$, 
then we call $f$ {\it supersymmetric}. The subspace of all supersymmetric elements in $Dist(T)$ will be denoted by $SS$.

The set of all $f\in Dist(T_r)\cap SS$ for which there are $f_{ij}'\in Dist(T_r)$ such that  $\Phi_{ij}(f)=(x_i+y_j)f_{ij}'$ will be denoted by $SS_r$.
\end{df} 

Let us remark that if $f$ is symmetric with respect to the terms $x_1, \ldots, x_m$ and $y_1, \ldots, y_n$ separately, and it satisfies the conditions $(\dagger_{11})$, 
then it is supersymmetric since it automatically satisfies all conditions $(\dagger_{ij})$ for every $i$ and $j$.

\begin{rem} 
As mentioned in the introduction, we want to find a minimal set of generators of $SS_r$ supersymmetric elements over the Frobenius kernel $T_r$. This is an interesting problem on its own. However, we expect that each supersymmetric element is the image $h(c)$ for some $c\in Z$, that is, the equality $h(Z)=SS$ is valid for general $GL(m|n)$. If this is confirmed, then it would give crucial information about the structure of the center $Z$, analogous to the classical Harish-Chandra theory. 
\end{rem}

\begin{df}
For an algebra $A$, any $K$-linear map $d: A \to A$ satisfying  
\[d(f_1f_2)=(df_1)f_2+f_1(df_2)-(df_1)(df_2)\]
will be called a {\it quasiderivation} of $A$.
\end{df}

\begin{lm}\label{lm2}
Each operator $\Phi_{ij}$ is a quasiderivation of $Dist(T)$.
\end{lm}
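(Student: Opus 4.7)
My plan is to reduce the quasiderivation identity to the statement that the substitution operator $s_{ij}$ is an algebra endomorphism of $Dist(T)$. Concretely, I would first argue that $s_{ij}\colon Dist(T)\to Dist(T)$ is an algebra endomorphism (in fact an automorphism, with inverse $s_{ij}^{-1}$ sending $x_i\mapsto x_i+1$ and $y_j\mapsto y_j-1$). Since $Dist(T)$ is commutative with basis given by the monomials $\prod\binom{x_i}{a_i}\prod\binom{y_j}{b_j}$ from Proposition~\ref{torusmn}, multiplicativity of $s_{ij}$ reduces to checking that the Vandermonde-type formula (1) is preserved under the shift $x\mapsto x-1$ (and symmetrically $y\mapsto y+1$). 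This is a polynomial identity valid over $\mathbb{Q}$, hence it persists over any ground field of positive characteristic after reduction modulo $p$.

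Once multiplicativity of $s_{ij}$ is in hand, verifying the quasiderivation identity is a one-line cancellation. Writing $\Phi_{ij}(f)=f-s_{ij}(f)$, one has
\[\Phi_{ij}(f_1 f_2)=f_1 f_2 - s_{ij}(f_1)s_{ij}(f_2),\]
while
\[\begin{aligned}
&\Phi_{ij}(f_1)f_2+f_1\Phi_{ij}(f_2)-\Phi_{ij}(f_1)\Phi_{ij}(f_2)\\
&\quad=(f_1-s_{ij}(f_1))f_2+f_1(f_2-s_{ij}(f_2))-(f_1-s_{ij}(f_1))(f_2-s_{ij}(f_2)).
\end{aligned}\]
Expanding the right-hand side and canceling the cross terms yields exactly $f_1 f_2-s_{ij}(f_1)s_{ij}(f_2)$, matching the previous display.

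The main (and really only) subtle point is justifying that $s_{ij}$ is an algebra endomorphism. If one wishes to avoid the lift-to-characteristic-zero argument, one can verify this intrinsically by expanding $\binom{x-1}{a}=\sum_{\ell\geq 0}(-1)^{\ell}\binom{x}{a-\ell}$ on both sides of the desired equality $\binom{x-1}{a}\binom{x-1}{b}=\sum_t\binom{a+b-t}{a-t}\binom{b}{b-t}\binom{x-1}{a+b-t}$ and checking coefficients term by term; this is essentially the same type of combinatorial manipulation as in Lemma~\ref{onedim}, more tedious but routine.
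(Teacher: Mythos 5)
Your reduction is correct and it isolates the real content of the lemma more cleanly than the paper's argument does. The paper proves the identity by taking $f_1=\binom{x_i}{a_1}\binom{y_j}{b_1}$, $f_2=\binom{x_i}{a_2}\binom{y_j}{b_2}$ and expanding both sides term by term; after the cancellations it lands on
$f_1f_2-\binom{x_i-1}{a_1}\binom{x_i-1}{a_2}\binom{y_j+1}{b_1}\binom{y_j+1}{b_2}$ and declares this equal to $\Phi_{ij}(f_1f_2)$ --- a step that is exactly the multiplicativity $s_{ij}(f_1f_2)=s_{ij}(f_1)s_{ij}(f_2)$, used without comment. You instead name that fact up front, prove it once, and then observe that for \emph{any} algebra endomorphism $s$ the operator $1-s$ is a quasiderivation, by the same two-line cancellation both arguments share. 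This is the same computation, but your factorization is more transparent and more general (it works for $1-s$ for any ring endomorphism $s$), and it makes visible the one nontrivial input. Your justification that $s_{ij}$ is an endomorphism is also sound: since $\Sigma_m\times\Sigma_n$ lets one reduce to a single pair $(x_i,y_j)$, and since $Dist(T_r)$ is spanned by the $\binom{x_i}{a}\binom{y_j}{b}$, it suffices to check that the shift $x\mapsto x-1$ respects formula (1); that identity holds as a polynomial identity in $x$ over $\mathbb{Z}$ (indeed over $\mathbb{Q}$), so specializing $x\mapsto x-1$ and reducing modulo $p$ preserves it, and the expansion $\binom{x-1}{a}=\sum_{\ell=0}^a(-1)^{a-\ell}\binom{x}{\ell}$ shows the image stays in the $\mathbb{Z}$-span of the $\binom{x}{k}$. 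One small point worth stating explicitly if you write this up: both sides of the quasiderivation identity are $K$-bilinear in $(f_1,f_2)$, so verifying it on a spanning family of $Dist(T)$ suffices; and since the remaining variables act as inert scalars (Lemma~\ref{induction} style), one really only needs the two variables $x_i$, $y_j$.
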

\begin{proof}
Set $f_1=\binom{x_i}{a_1}\binom{y_j}{b_1}$ and $f_2=\binom{x_i}{a_2}\binom{y_j}{b_2}$ and compute 
\[\begin{aligned}&(\Phi_{ij}f_1)f_2+f_1(\Phi_{ij}f_2)-(\Phi_{ij}f_1)(\Phi_{ij}f_2)\\
&=(\binom{x_i}{a_1}\binom{y_j}{b_1}-\binom{x_i-1}{a_1}\binom{y_j+1}{b_1})\binom{x_i}{a_2}\binom{y_j}{b_2}\\
&+\binom{x_i}{a_1}\binom{y_i}{b_1}(\binom{x_i}{a_2}\binom{y_j}{b_2}-\binom{x_i-1}{a_2}\binom{y_j+1}{b_2})\\
&-(\binom{x_i}{a_1}\binom{y_j}{b_1}-\binom{x_i-1}{a_1}\binom{y_j+1}{b_1})(\binom{x_i}{a_2}\binom{y_j}{b_2}-\binom{x_i-1}{a_2}\binom{y_j+1}{b_2})
\end{aligned}\]
\[\begin{aligned}&=\binom{x_i}{a_1}\binom{x_i}{b_2}\binom{y_j}{b_1}\binom{y_j}{b_2}-\binom{x_i-1}{a_1}\binom{x_i}{a_2}\binom{y_j+1}{b_1}\binom{y_j}{b_2}\\
&+\binom{x_i}{a_1}\binom{x_i}{a_2}\binom{y_j}{b_1}\binom{y_j}{b_2}-\binom{x_i}{a_1}\binom{x_i-1}{a_2}\binom{y_j}{b_2}\binom{y_j+1}{b_2}\\
&-\binom{x_i}{a_1}\binom{x_i}{a_2}\binom{y_j}{b_1}\binom{y_j}{b_2}+\binom{x_i}{a_1}\binom{x_i-1}{a_2}\binom{y_j}{b_1}\binom{y_j+1}{b_2}
+\binom{x_i-1}{a_1}\binom{x_i}{a_2}\binom{y_j+1}{b_1}\binom{y_j}{b_2}\\
&-\binom{x-1}{a_1}\binom{x_i-1}{a_2}\binom{y_j+1}{b_1}\binom{y_j+1}{b_2}\\
&=\binom{x_i}{a_1}\binom{x_i}{a_2}\binom{y_j}{b_1}\binom{y_j}{b_2}-\binom{x-1}{a_1}\binom{x_i-1}{a_2}\binom{y_j+1}{b_1}\binom{y_j+1}{b_2}\\
&=\Phi(f_1f_2)
\end{aligned}\]
\end{proof}
\vspace{-1em}

\begin{rem}
There is a connection between supersymmetric elements in the distribution algebra of the torus $T$ defined above and the classical supersymmetric polynomials introduced in 
\cite{stem} and further investigated in 
 \cite{kanttrish}, \cite{lascala} and \cite{grmazu}. The above difference operator $\Phi_{ij}$ 
is related to the derivation 
\[(\frac{d}{dx_i}+\frac{d}{dy_j})f(x_1, \ldots, x_m,y_1, \ldots, y_n).\]

If we replace elements of $Dist(T)$ by polynomials in $x_1, \ldots, x_m,y_1, \ldots, y_n$, the quasiderivation $\Phi_{ij}$ by the derivation $\frac{d}{dx_i}+\frac{d}{dy_j}$ and make a substitution $y_j\mapsto -y_j$, the property $(\dagger_{ij})$ is transformed to 
\[(\frac{d}{dx_i}+\frac{d}{dy_j})(f(x_1, \ldots, x_m,y_1, \ldots, y_n))=(x_i-y_j)f'_{ij}(x_1, \ldots, y_m,y_1, \ldots, y_n),\]
which together with $f(x_1, \ldots, x_m,y_1, \ldots, y_n)$ been invariant under permutations of variables $x_1, \ldots, x_m$ and $y_1, \ldots, y_n$ separately
gives the definition of when the polynomial $f(x_1, \ldots, x_m,y_1, \ldots, y_n)$ is supersymmetric.
\end{rem}

Our goal is to describe all supersymmetric elements in $Dist(T)$. 
\begin{lm}\label{lm3}
$SS=\lim\limits_{\longrightarrow} SS_r$.
Additionally, $SS$ is a subalgebra of $Dist(T)$ and $SS_r$ is a subalgebra of $Dist(T_r)\cap SS$.
\end{lm}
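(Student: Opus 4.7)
The plan is to dispatch the three claims in the lemma with short arguments, all leaning on Lemma \ref{lm2} together with the obvious filtration $Dist(T)=\bigcup_r Dist(T_r)$.

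For the subalgebra statements, I would first observe that symmetry under $\Sigma_m\times \Sigma_n$ is automatic under products, since the symmetric groups act by algebra automorphisms permuting the generators $x_i$ and $y_j$. The real content is condition $(\dagger_{ij})$. Given $f_1,f_2\in SS$ with witnesses $\Phi_{ij}(f_k)=(x_i+y_j)f'_{k,ij}$, the quasiderivation formula from Lemma \ref{lm2} together with commutativity of $Dist(T)$ yields
\[\Phi_{ij}(f_1f_2)=(x_i+y_j)\bigl[f'_{1,ij}f_2+f_1f'_{2,ij}-(x_i+y_j)f'_{1,ij}f'_{2,ij}\bigr],\]
so $f_1f_2$ again satisfies $(\dagger_{ij})$, and $SS$ is an algebra. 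The bracketed factor lies in $Dist(T_r)$ whenever $f_k,f'_{k,ij}\in Dist(T_r)$, since $(x_i+y_j)\in Dist(T_r)$ and $Dist(T_r)$ is a subalgebra; this gives that $SS_r$ is closed under multiplication.

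For the direct-limit statement, the inclusions $Dist(T_r)\hookrightarrow Dist(T_{r+1})$ induce $SS_r\subseteq SS_{r+1}$ directly from the definition, so $\bigcup_r SS_r$ is the directed union. The remaining task is the inclusion $SS\subseteq \bigcup_r SS_r$. Given $f\in SS$, pick $r_0$ so that $f\in Dist(T_{r_0})$; for each of the finitely many pairs $(i,j)$ with $1\leq i\leq m$, $1\leq j\leq n$, choose a witness $f'_{ij}\in Dist(T)$ for $(\dagger_{ij})$. Each such witness is a finite $K$-linear combination of basis monomials $\binom{x_k}{a_k}\binom{y_l}{b_l}$, hence lies in $Dist(T_{r_{ij}})$ for some $r_{ij}$. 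Taking $r=\max\{r_0,r_{ij}\}$ places $f$ in $SS_r$.

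The only mild obstacle is that a witness $f'_{ij}$ need not be uniquely determined by $f$, since I am not appealing to any integrality property of $Dist(T)$ that would make $(x_i+y_j)$ a non-zero-divisor. This is harmless: the definition of $SS$ only asks for the existence of \emph{some} witness in $Dist(T)$, and any such witness, being a finite sum in the PBW-style basis, automatically lies in $Dist(T_r)$ for $r$ large enough, so no uniqueness or divisibility problem needs to be resolved.
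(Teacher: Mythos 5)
Your proof is correct and follows essentially the same route as the paper's: Lemma~\ref{lm2} for closure under products, and the observation that a supersymmetric $f$ together with its finitely many witnesses $f'_{ij}$ lands in some $Dist(T_r)$ for $r$ sufficiently large. You simply spell out the quasiderivation factorization explicitly and note that the resulting witness stays inside $Dist(T_r)$, which the paper leaves implicit.
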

\begin{proof}
If $f$ is supersymmetric, then there are elements $f_{ij}'$ such that $\Phi_{ij}(f)=(x_i+y_j)f_{ij}'$. We can choose $q$ large enough so that all exponents $a_i,b_j$ in the monomials 
$\binom{x_1}{a_1}\ldots \binom{x_m}{a_m}\binom{y_1}{b_1}\ldots \binom{y_n}{b_n}$ appearing in $f$ and all $f_{ij}'$ satisfy $0\leq a_i,b_j<q$. Then $f\in SS_r$.

If $f_1,f_2\in SS$, then $\Phi_{ij}(f_1)$ and $\Phi_{ij}(f_2)$ are multiples of $x_i+y_j$ and by Lemma \ref{lm2} so is $\Phi_{ij}(f_1f_2)$, showing that $f_1f_2\in SS$.
Therefore $SS$ is a subalgebra of $Dist(T)$. 

Analogously, $SS_r$ is a subalgebra of $Dist(T_r)$, hence of $Dist(T_r)\cap SS$.
\end{proof}

As a consequence of the above lemma, from now on we will work inside a fixed $Dist(T_r)$.


Next, we will show that the condition $(\dagger_{st})$ can be recognized by working over the torus of $GL(1|1)$ corresponding to variables $x_s$ and $y_t$.

Let $f\in Dist(T)$ and write
\[f=\sum_{(i_1, \ldots, i_m,j_1, \ldots, j_{n})} a_{i_1, \ldots, i_{m},j_1, \ldots, j_{n}} \binom{x_1}{i_1} \ldots \binom{x_m}{i_m}\binom{y_1}{j_1} \ldots \binom{y_n}{j_n}.\]
Fix $1\leq s \leq m$ and $1\leq t\leq n$.

Write 
\[\begin{aligned}f= &\sum_{(i_1, \ldots, \widehat{i_{s}}, \ldots, i_m, j_1, \ldots, \widehat{j_{t}}, \ldots, j_n)}  
\binom{x_1}{i_1}\ldots \widehat{\binom{x_s}{i_s}} \ldots \binom{x_m}{i_m}  \binom{y_1}{j_1} \ldots \widehat{\binom{y_s}{j_s}} \ldots \binom{y_n}{j_n}\times\\
&f_{(i_1, \ldots, \widehat{i_{s}}, \ldots, i_m, j_1, \ldots, \widehat{j_{t}}, \ldots, j_n)},
\end{aligned}\]
where 
\[f_{(i_1, \ldots, \widehat{i_{s}}, \ldots, i_m, j_1, \ldots, \widehat{j_{t}}, \ldots, j_n)}(x_s,y_t)=
\sum_{(i_s,j_t)} a_{i_1, \ldots, i_m,j_1, \ldots, j_n} \times \binom{x_s}{i_s} \binom{y_t}{j_t}\]
is obtained by ``freezing'' variables 
$x_1, \ldots, \widehat{x_s}, \ldots, x_m,y_1, \ldots, \widehat{y_t}, \ldots y_n.$

\begin{lm}\label{induction}
An element $f$ in $Dist(T)$ satisfies the condition $(\dagger_{st})$ if and only if all summands $f_{(i_1, \ldots, \widehat{i_{s}}, \ldots, i_m, j_1, \ldots, \widehat{j_{t}}, \ldots, j_n)}$, 
considered as elements in variables $x_s,y_t$ only, satisfy the condition $(\dagger_{st})$.
\end{lm}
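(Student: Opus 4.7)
The plan is to exploit the natural algebra decomposition $Dist(T) \cong Dist(T') \otimes_K Dist(T'')$, where $T'$ is the sub-torus corresponding to the variables $x_1, \ldots, \widehat{x_s}, \ldots, x_m, y_1, \ldots, \widehat{y_t}, \ldots, y_n$ and $T''$ is the $GL(1|1)$-torus corresponding to the pair $x_s, y_t$. The crucial point is that the substitution $s_{st}$, and hence the operator $\Phi_{st}$, fixes the $Dist(T')$-factor pointwise and acts nontrivially only on the $Dist(T'')$-factor; multiplication by $x_s + y_t$ likewise takes place entirely inside $Dist(T'')$.

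Concretely, I would first write $f = \sum_A m_A \otimes f_A$ uniquely, where $A$ runs over tuples $(i_1, \ldots, \widehat{i_s}, \ldots, i_m, j_1, \ldots, \widehat{j_t}, \ldots, j_n)$, $m_A$ is the corresponding basis monomial of $Dist(T')$, and $f_A = f_A(x_s, y_t) \in Dist(T'')$ is exactly the frozen summand in the statement of the lemma. Using that $\Phi_{st}$ commutes with multiplication by every $m_A$, we get
\[ \Phi_{st}(f) = \sum_A m_A \otimes \Phi_{st}(f_A), \]
and any candidate witness $f'_{st} \in Dist(T)$, expanded as $\sum_A m_A \otimes h_A$, satisfies
\[ (x_s + y_t)\, f'_{st} = \sum_A m_A \otimes (x_s + y_t) h_A. \]
For the "if" direction, if each $f_A$ satisfies $\Phi_{st}(f_A) = (x_s + y_t) h_A$, then setting $f'_{st} := \sum_A m_A \otimes h_A$ immediately yields $(\dagger_{st})$ for $f$. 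For the "only if" direction, given $(\dagger_{st})$ for $f$, I expand a chosen witness $f'_{st}$ in the same tensor form and match coefficients of the linearly independent basis monomials $m_A$ to obtain the relation $\Phi_{st}(f_A) = (x_s + y_t) h_A$ for each $A$.

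The argument is essentially bookkeeping and I do not anticipate a genuine obstacle. The only technical step requiring any care is justifying that $\Phi_{st}$ commutes with left (equivalently right) multiplication by elements of $Dist(T')$, which is immediate because $s_{st}$ affects only $x_s$ and $y_t$, and that the family $\{m_A\}$ is $Dist(T'')$-linearly independent inside $Dist(T)$, which follows from the basis description $\mathcal{B}_r$ recorded in Section 1. Passing from $Dist(T_r)$ to $Dist(T)$ is harmless by Lemma \ref{lm3}.
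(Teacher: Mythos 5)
Your proof is correct and follows essentially the same route as the paper's: freezing the complementary variables to expose a $Dist(T')\otimes_K Dist(T'')$ decomposition, noting that $\Phi_{st}$ and multiplication by $x_s+y_t$ act only on the $Dist(T'')$ tensorand, and matching coefficients against the $Dist(T'')$-linearly independent monomials $m_A$. The tensor-product phrasing is a harmless repackaging of what the paper writes out with explicit products of binomials.
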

\begin{proof}
Let $f'\in Dist(T)$ and 
\[\begin{aligned}f'= &\sum_{(i_1, \ldots, \widehat{i_{s}}, \ldots, i_m, j_1, \ldots, \widehat{j_{t}}, \ldots, j_n)}  
\binom{x_1}{i_1}\ldots \widehat{\binom{x_s}{i_s}} \ldots \binom{x_m}{i_m}  \binom{y_1}{j_1} \ldots \widehat{\binom{y_s}{j_s}} \ldots \binom{y_n}{j_n}\times\\
&f'_{(i_1, \ldots, \widehat{i_{s}}, \ldots, i_m, j_1, \ldots, \widehat{j_{t}}, \ldots, j_n)}.
\end{aligned}\]

Since \[\begin{aligned}\Phi_{st}(f)= &\sum_{(i_1, \ldots, \widehat{i_{s}}, \ldots, i_m, j_1, \ldots, \widehat{j_{t}}, \ldots, j_n)}  
\binom{x_1}{i_1}\ldots \widehat{\binom{x_s}{i_s}} \ldots \binom{x_m}{i_m}  \binom{y_1}{j_1} \ldots \widehat{\binom{y_s}{j_s}} \ldots \binom{y_n}{j_n}\times\\
&\Phi_{st}(f_{(i_1, \ldots, \widehat{i_{s}}, \ldots, i_m, j_1, \ldots, \widehat{j_{t}}, \ldots, j_n)}),
\end{aligned}\]
and
\[\begin{aligned}(x_s+y_t)f'= &\sum_{(i_1, \ldots, \widehat{i_{s}}, \ldots, i_m, j_1, \ldots, \widehat{j_{t}}, \ldots, j_n)}  
\binom{x_1}{i_1}\ldots \widehat{\binom{x_s}{i_s}} \ldots \binom{x_m}{i_m}  \binom{y_1}{j_1} \ldots \widehat{\binom{y_s}{j_s}} \ldots \binom{y_n}{j_n}\times\\
&(x_s+y_t)f'_{(i_1, \ldots, \widehat{i_{s}}, \ldots, i_m, j_1, \ldots, \widehat{j_{t}}, \ldots, j_n)},
\end{aligned}\]
the claim follows.
\end{proof}

\section{Special supersymmetric elements}

\begin{lm}\label{lm4}
We have 
\[(x_i+y_j)h_{a_1, \ldots, a_m|b_1, \ldots,b_n}=(a_i+b_j)h_{a_1, \ldots, a_m|b_1, \ldots,b_n}\]
and
\[(a_i+b_j)\Phi_{ij}(h_{a_1, \ldots, a_m|b_1, \ldots,b_n})=(x_i+y_j)\Phi_{ij}(h_{a_1, \ldots, a_m|b_1, \ldots,b_n}).\]
\end{lm}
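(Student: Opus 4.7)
The plan is to verify the two identities in sequence, with the first being an eigenvalue computation for the one-variable idempotents and the second following formally from the quasiderivation property of $\Phi_{ij}$.

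First, I would establish the scalar identity $x \cdot X_a = a X_a$ in $Dist((G_m)_r)$. Expanding
\[x X_a = \sum_{k=a}^{q-1} (-1)^{k-a} \binom{k}{a} x \binom{x}{k}\]
and applying $x \binom{x}{k} = (k+1) \binom{x}{k+1} + k \binom{x}{k}$, I would reindex the shifted part by $k \mapsto k-1$. The boundary term at $k = q$ vanishes because the coefficient $q = p^r$ is zero in $K$, so no interpretation of $\binom{x}{q}$ is needed. Combining matching coefficients at $\binom{x}{k}$ via Pascal's identity $\binom{k}{a} - \binom{k-1}{a} = \binom{k-1}{a-1}$ followed by $k \binom{k-1}{a-1} = a \binom{k}{a}$ yields $x X_a = a X_a$; the analogous identity $y \cdot Y_b = b Y_b$ follows verbatim. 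Since $x_i$ commutes with every factor $X_{k,a_k}$ for $k \neq i$ and with every $Y_{j,b_j}$, multiplying $h_{a_1, \ldots, a_m|b_1, \ldots, b_n}$ by $x_i$ only affects the $X_{i,a_i}$ factor, giving $x_i h = a_i h$; symmetrically $y_j h = b_j h$. Adding yields the first identity of the lemma.

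For the second identity, I would exploit the key observation
\[\Phi_{ij}(x_i + y_j) = (x_i + y_j) - ((x_i - 1) + (y_j + 1)) = 0.\]
Applying the quasiderivation formula of Lemma \ref{lm2} to the product $(x_i + y_j) \cdot h$ with $f_1 = x_i + y_j$, the first and third terms vanish, leaving
\[\Phi_{ij}((x_i + y_j) h) = (x_i + y_j) \Phi_{ij}(h).\]
Since $a_i + b_j \in K$ is a scalar and $\Phi_{ij}$ is $K$-linear, we also have $\Phi_{ij}((a_i + b_j) h) = (a_i + b_j) \Phi_{ij}(h)$. The first identity of the lemma equates the two left-hand sides, giving the second identity. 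The computational core is the eigenvalue identity in step one; step three is a short formal consequence. The only subtlety worth noting is that Lemma \ref{lm2} was verified only on monomials in $x_i, y_j$, but the formula extends to arbitrary elements of $Dist(T)$ by bilinearity (equivalently, because $s_{ij} = \mathrm{id} - \Phi_{ij}$ is a ring homomorphism).
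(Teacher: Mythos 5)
Your proposal is correct and follows essentially the same route as the paper: you verify the eigenvalue identity $x_i X_{i,a_i}=a_i X_{i,a_i}$ (and its $y_j$ analogue) by the same Pascal-identity reindexing, and you derive the second identity formally from the first. The only cosmetic difference is in the last step, where you apply $\Phi_{ij}$ to the first identity and invoke the quasiderivation formula of Lemma~\ref{lm2} together with $\Phi_{ij}(x_i+y_j)=0$, whereas the paper applies $s_{ij}$ (a ring homomorphism fixing $x_i+y_j$) to the first identity and then subtracts; since $\Phi_{ij}=\mathrm{id}-s_{ij}$, these are the same argument written in two notations.
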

\begin{proof}
If $a_i>0$, then 
\[\begin{aligned}&x_iX_{i,a_i}=a_i\binom{x_i}{a_i}+\sum_{k=a_i+1}^{q-1} [(-1)^{k-a_i}\binom{k}{a_i}+(-1)^{k+1-a_i}\binom{k+1}{a_i}](k+1)\binom{x_i}{k+1}\\
&=a_i\binom{x_i}{a_i}+\sum_{k=a_i+1}^{q-1} (-1)^{k+1-a_i}\binom{k}{a_i-1}(k+1)\binom{x_i}{k+1}\\
&=a_i\binom{x_i}{a_i}+\sum_{k=a_i+1}^{q-1} (-1)^{k+1-a_i}a_i\binom{k+1}{a_i}\binom{x_i}{k+1}\\
&=a_iX_{i,a_i}.
\end{aligned}\]
If $a_i=0$, then 
\[x_iX_{i,0}=\sum_{k=1}^{q-1} [(-1)^{k}+(-1)^{k+1}](k+1)\binom{x_i}{k+1}=0=a_iX_{i,0}.\]

Analogously, $y_jY_{j,b_j}=b_jY_{j,b_j}$.

Therefore $(x_i+y_j)X_{i,a_i}Y_{j,b_j}=(x_iX_{i,a_i})Y_{j,b_j}+X_{i,a_i}(y_jY_{j,b_j})=(a_i+b_j)X_{i,a_i}Y_{j,b_j}$, 
which implies 
\[(x_i+y_j)h_{a_1, \ldots, a_m|b_1, \ldots,b_n}=(a_i+b_j)h_{a_1, \ldots, a_m|b_1, \ldots,b_n}.\]
Applying the map $s_{ij}$ we obtain 
\[(x_i-1+y_j+1)s_{ij}(h_{a_1, \ldots, a_m|b_1, \ldots,b_n})=(a_i+b_j)s_{ij}(h_{a_1, \ldots, a_m|b_1, \ldots,b_n})\]
which implies 
\[(x_i+y_j)\Phi_{ij}(h_{a_1, \ldots, a_m|b_1, \ldots,b_n})=(a_i+b_j)\Phi_{ij}(h_{a_1, \ldots, a_m|b_1, \ldots,b_n}).\]
\end{proof}

As a corollary of the last lemma, we get the following statement.
\begin{cor}\label{cor1}
The set of all elements in $Dist(T_r)$ that are multiples of $x_i+y_j$ has a basis consisting of all elements $h_{a_1, \ldots, a_m|b_1, \ldots,b_n}$ such that $p$ does not divide $a_i+b_j$.
\end{cor}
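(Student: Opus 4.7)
The plan is to reduce everything to the eigenbasis decomposition of $Dist(T_r)$ given by the primitive idempotents $h_{a_1,\ldots,a_m|b_1,\ldots,b_n}$. By Proposition \ref{torusmn}, every element $g\in Dist(T_r)$ can be written uniquely as
\[g=\sum_{(\underline{a}|\underline{b})} c_{\underline{a}|\underline{b}}\, h_{\underline{a}|\underline{b}},\]
with $c_{\underline{a}|\underline{b}}\in K$, where the sum runs over tuples with $0\leq a_i,b_j<q$.

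Next I would apply the key identity from Lemma \ref{lm4}, namely $(x_i+y_j)h_{\underline{a}|\underline{b}}=(a_i+b_j)h_{\underline{a}|\underline{b}}$, to get
\[(x_i+y_j)g=\sum_{(\underline{a}|\underline{b})} (a_i+b_j)\,c_{\underline{a}|\underline{b}}\, h_{\underline{a}|\underline{b}}.\]
Since the $h_{\underline{a}|\underline{b}}$ are linearly independent, multiplication by $(x_i+y_j)$ acts on $Dist(T_r)$ diagonally in this basis with eigenvalue $a_i+b_j\in K$. In particular, the image of multiplication by $(x_i+y_j)$ is spanned by those $h_{\underline{a}|\underline{b}}$ for which the scalar $a_i+b_j$ is nonzero in $K$, i.e. for which $p\nmid a_i+b_j$.

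Conversely, if $p\nmid a_i+b_j$, then $a_i+b_j$ is invertible in $K$, and therefore
\[h_{\underline{a}|\underline{b}}=(a_i+b_j)^{-1}(x_i+y_j)h_{\underline{a}|\underline{b}}\]
is itself a multiple of $x_i+y_j$. Hence the indicated $h_{\underline{a}|\underline{b}}$ all lie in the multiples of $x_i+y_j$, and their linear independence is inherited from the basis given by Proposition \ref{torusmn}. Combining the two directions yields the claimed basis.

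There is no substantive obstacle: once Lemma \ref{lm4} is available the argument is a direct eigenvalue computation, and the only point to keep in mind is that ``multiple of $x_i+y_j$'' is interpreted as the image of left (equivalently right, since $Dist(T_r)$ is commutative) multiplication by $x_i+y_j$, which matches the diagonal description above.
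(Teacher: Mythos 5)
Your proposal is correct and matches the paper's intended argument: the corollary is deduced directly from the eigenvalue identity $(x_i+y_j)h_{\underline{a}|\underline{b}}=(a_i+b_j)h_{\underline{a}|\underline{b}}$ of Lemma \ref{lm4}, combined with the idempotent basis from Proposition \ref{torusmn}. You simply spell out the diagonalization picture that the paper leaves implicit when it says the corollary follows from that lemma.
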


Denote by $\tilde{h}_{a_1, \ldots, a_m|b_1, \ldots,b_n}$ the symmetrizer of $h_{a_1, \ldots, a_m|b_1, \ldots,b_n}$ with respect to the action of the symmetric group
$\Sigma_m$ on $a_1, \ldots, a_m$ and of the symmetric group $\Sigma_n$ on $b_1, \ldots, b_n$. 

\begin{pr}\label{pr1}
If no $a_i+b_j$, for $1\leq i \leq m$ and $1\leq j\leq n$, is divisible by $p$, then $\tilde{h}_{a_1, \ldots, a_m|b_1, \ldots,b_n}\in SS$.
\end{pr}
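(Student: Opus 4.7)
The plan is to reduce the claim to Lemma \ref{lm4} applied termwise to each summand of the symmetrizer. By construction, $\tilde{h}_{a_1,\ldots,a_m|b_1,\ldots,b_n}$ is manifestly symmetric in $x_1,\ldots,x_m$ and in $y_1,\ldots,y_n$ separately (that is precisely what the symmetrizer achieves), so the only thing left to verify is that $\tilde h := \tilde{h}_{a_1,\ldots,a_m|b_1,\ldots,b_n}$ satisfies the condition $(\dagger_{ij})$ for every pair $(i,j)$ with $1\le i\le m$ and $1\le j\le n$.

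Write $\tilde h = \sum_{(\sigma,\tau)\in\Sigma_m\times\Sigma_n} h_{\sigma(\vec a)|\tau(\vec b)}$, where $\sigma(\vec a) = (a_{\sigma^{-1}(1)},\ldots,a_{\sigma^{-1}(m)})$ and similarly for $\tau(\vec b)$. Fix $(i,j)$. By Lemma \ref{lm4} applied to each individual summand,
\[
(x_i+y_j)\,\Phi_{ij}\bigl(h_{\sigma(\vec a)|\tau(\vec b)}\bigr)=\bigl(a_{\sigma^{-1}(i)}+b_{\tau^{-1}(j)}\bigr)\,\Phi_{ij}\bigl(h_{\sigma(\vec a)|\tau(\vec b)}\bigr).
\]
The scalar on the right equals $a_k+b_l$ for some indices $k,l$, and by hypothesis no such sum is divisible by $p$, so it is invertible in $K$. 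Consequently,
\[
\Phi_{ij}\bigl(h_{\sigma(\vec a)|\tau(\vec b)}\bigr)=(x_i+y_j)\cdot\bigl(a_{\sigma^{-1}(i)}+b_{\tau^{-1}(j)}\bigr)^{-1}\Phi_{ij}\bigl(h_{\sigma(\vec a)|\tau(\vec b)}\bigr),
\]
so each summand individually satisfies $(\dagger_{ij})$. Summing over $(\sigma,\tau)$ and using linearity of $\Phi_{ij}$ produces an explicit $f'_{ij}\in Dist(T)$ with $\Phi_{ij}(\tilde h)=(x_i+y_j)f'_{ij}$. Running this for every $(i,j)$ gives $\tilde h\in SS$.

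There is no real obstacle here: the work was already absorbed into Lemma \ref{lm4}, which provides an eigenvector-type identity in which the eigenvalue is precisely the scalar we need to invert. The role of the global hypothesis (no $a_k+b_l$ is divisible by $p$, rather than merely the ``diagonal'' ones $a_i+b_j$) is exactly to ensure that this inversion is possible uniformly across every permuted summand in the symmetrizer, since the permutations mix the positions so that each $a_k+b_l$ appears as a coefficient of some term.
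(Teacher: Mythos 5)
Your proof is correct and takes essentially the same approach as the paper: symmetry of the symmetrizer is immediate, and condition $(\dagger_{ij})$ follows from Lemma \ref{lm4} by inverting the scalar $a_k+b_l$. The paper's own proof simply states this in two lines; you have usefully unpacked the invertibility step and explained why the hypothesis must cover every pair $(k,l)$ rather than just the diagonal one.
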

\begin{proof}
By definition, $\tilde{h}_{a_1, \ldots, a_m|b_1, \ldots,b_n}$ is symmetric with respect to $x_1, \ldots, x_m$ and $y_1, \ldots, y_n$ separately. 
It satisfies the conditions $(\dagger_{ij})$ by Lemma \ref{lm4}.
\end{proof}

An element $h_{a_1, \ldots, a_m|b_1, \ldots,b_n}$, such that all $a_i+b_j$ are not divisible by $p$, will be called {\it special}. An element 
$h_{a_1, \ldots, a_m|b_1, \ldots,b_n}$, such that $p$ divides some $a_i+b_j$, will be called {\it ordinary}.

For $0\leq a<q$, denote by $H_a$ the sum of all ordinary elements $h_{a_1, \ldots, a_m|b_1, \ldots,b_n}\in Dist(T_r)$ such that 
\[a_1+\ldots + a_m+b_1+\ldots +b_n  \equiv a \pmod q.\]

\begin{pr}\label{pr3}
The elements $H_a$ are supersymmetric.
\end{pr}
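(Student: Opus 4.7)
The plan is to verify the two defining properties of a supersymmetric element for $H_a$: symmetry in the $x_i$'s and in the $y_j$'s separately, and the condition $(\dagger_{ij})$ for every pair $(i,j)$. The symmetry is essentially by construction: both conditions specifying which $h_\alpha$ contribute to $H_a$---namely that $h_\alpha$ is ordinary and that $|\alpha| := \sum_k a_k + \sum_l b_l \equiv a \pmod{q}$---depend only on the multisets $\{a_1,\ldots,a_m\}$ and $\{b_1,\ldots,b_n\}$, so $H_a$ is invariant under $\Sigma_m \times \Sigma_n$.

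For $(\dagger_{ij})$, my first step is to work out the action of the substitution $s_{ij}$ on the primitive idempotents $h_\alpha$. Since $s_{ij}$ is an algebra automorphism of $Dist(T_r)$, it must permute these idempotents. A direct computation, starting from $\binom{x-1}{k} = \sum_{j=0}^{k}(-1)^{k-j}\binom{x}{j}$ and combining the hockey-stick identity with the vanishing of $\binom{q}{a+1}$ in characteristic $p$ for $0 < a+1 < q$, should yield
\[X_a(x_i-1) = X_{(a+1) \bmod q}(x_i), \qquad Y_b(y_j+1) = Y_{(b-1) \bmod q}(y_j).\]
Consequently $s_{ij}(h_\alpha) = h_{\sigma_{ij}(\alpha)}$, where $\sigma_{ij}$ increments $a_i$ and decrements $b_j$ modulo $q$, leaving the other coordinates unchanged.

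Next I would check that $\sigma_{ij}$ preserves $|\alpha| \bmod q$: in the four cases according to whether $a_i$ wraps from $q-1$ to $0$ or $b_j$ wraps from $0$ to $q-1$, the integer sum changes by one of $-q, 0, q$. Thus $\Phi_{ij}(H_a) = H_a - s_{ij}(H_a)$ expands as $\sum_\beta c_\beta h_\beta$ over indices $\beta$ with $|\beta|\equiv a\pmod{q}$, where $c_\beta = [\beta \text{ ordinary}] - [\sigma_{ij}^{-1}(\beta)\text{ ordinary}]$. By Corollary~\ref{cor1}, to conclude that $\Phi_{ij}(H_a)$ is a multiple of $x_i+y_j$ it suffices to show $c_\beta = 0$ whenever $p \mid \beta_i + \beta_j$. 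In that case $\beta$ is automatically ordinary (at the pair $(i,j)$), and a short case analysis on whether $\beta_i = 0$ or $\beta_j = q-1$ (the only situations where the mod-$q$ wrap-around alters the arithmetic) shows that $(\sigma_{ij}^{-1}\beta)_i + (\sigma_{ij}^{-1}\beta)_j \equiv \beta_i + \beta_j \pmod{p}$ in every case, so $\sigma_{ij}^{-1}(\beta)$ is also ordinary and the two indicators cancel.

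Combined with the symmetry, this gives $H_a \in SS_r \subset SS$. The main substantive step is the cyclic-shift identity $X_a(x_i-1) = X_{(a+1)\bmod q}(x_i)$, whose proof relies on the Lucas/Kummer-type vanishing of $\binom{p^r}{j}$ for $0 < j < p^r$ in characteristic $p$; everything after that amounts to tracking indices through a finite permutation of the idempotent basis.
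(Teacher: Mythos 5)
Your proof is correct, and it follows a genuinely different route from the paper's. The paper reduces $(\dagger_{11})$ for $H_a$ to the $GL(1|1)$ case via the ``freezing'' Lemma~\ref{induction} and then invokes Proposition~\ref{pr2}, which in turn imports the $GL(1|1)$ structure from Theorem~9.10 of \cite{zubmar}. You instead compute the substitution $s_{ij}$ directly on the idempotent basis. The cyclic-shift identity $X_a(x-1)=X_{(a+1)\bmod q}(x)$ (and the companion $Y_b(y+1)=Y_{(b-1)\bmod q}(y)$) is correct and elementary: expanding $\binom{x-1}{k}=\sum_{l\le k}(-1)^{k-l}\binom{x}{l}$ and applying the hockey-stick identity reduces everything to $\binom{q}{a+1}\equiv 0\pmod p$ for $0<a+1<q$; alternatively, since $s_{ij}$ is an algebra automorphism of $Dist(T_r)$ it must permute the primitive idempotents, and $x_i h_\alpha=a_i h_\alpha$ forces $x_i\,s_{ij}(h_\alpha)=(a_i+1)s_{ij}(h_\alpha)$, which together with the explicit computation pins down the permutation. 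Once $s_{ij}(h_\alpha)=h_{\sigma_{ij}\alpha}$ is in place, the rest is pure bookkeeping: $\sigma_{ij}$ preserves $|\alpha|\bmod q$ and preserves $a_i+b_j\bmod p$ (hence preserves ordinariness at the pair $(i,j)$), so by Corollary~\ref{cor1} the difference $H_a-s_{ij}(H_a)$ lies in $(x_i+y_j)Dist(T_r)$. Your version is slightly longer to set up but is self-contained and uniform; the paper's is shorter on the page but glides over the case where the frozen tail $(a_2,\ldots,a_m|b_2,\ldots,b_n)$ is itself special and $p\nmid k$ --- there the ``group'' is a strict subset of $\{h_{a_1|b_1}:a_1+b_1\equiv k\pmod q\}$ rather than the full sum $\sum_{a_1+b_1\equiv k}h_{a_1|b_1}h_{\text{tail}}$ claimed. (The conclusion survives because each such $h_{a_1|b_1}$ is already individually a multiple of $x_1+y_1$, but your argument avoids this case split altogether.)
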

\begin{proof}
Clearly, if $H_a$ contains a summand $h_{a_1, \ldots, a_m|b_1, \ldots,b_n}$, then it contains all summands obtained by permuting indices $a_1, \ldots, a_m$ and $b_1, \ldots, b_n$ separately. Therefore it remains to prove that $H_a$ satisfies the condition $(\dagger_{11})$. 
Split all summands in $H_a$ into groups consisting of those $h_{a_1, \ldots, a_m|b_1, \ldots,b_n}$ for which $a_2, \ldots, a_m; b_2, \ldots, b_n$ are fixed.
Then $a_1+b_1$ modulo $q$ remains constant within a group, say $a_1+b_1\equiv k \pmod q$. Then the sum of all term in a group equals 
\[\sum_{a_1+b_1\equiv k\pmod q} h_{a_1|b_1} h_{a_2,\ldots, a_m|b_2, \ldots, b_n},\] 
which by Proposition \ref{pr2} satisfies the condition $(\dagger_{11})$. Therefore, by Lemma \ref{induction}, 
$H_a$ also satisfies the condition $(\dagger_{11})$.
\end{proof}

\section{Supersymmetric elements for $GL(m|1)$}

In light of the Lemma \ref{induction}, it is essential to describe all supersymmetric elements for $GL(1|1)$. 

\subsection{$GL(1|1)$} Assume $G=GL(1|1)$.

Some elements in $SS_r$ were described in \cite{zubmar}, as elements of $h(Z_r)$ of the image of the center $Z_r$ of $Dist(G_r)$ under the map $h$.
Denote by $\overline{a}$ the remainder after the division of $a$ by $q$.

\begin{pr}\label{pr2}
The generators of $h(Z_r)$ for $GL(1|1)$ are given as follows:

-$h_{a|b}$ for $a+b$ not divisible by $p$;

-$\sum_{i=0}^q h_{i|\overline{pl-i}}$ where $l=0, \ldots, \frac{q}{p}-1$.
\end{pr}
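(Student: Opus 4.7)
The claim essentially records Theorem~9.10 of \cite{zubmar}, so the shortest route is a direct reference; here I outline how to recover it within the framework of this paper. Since the equality $h(Z_r)=SS_r$ for $GL(1|1)$ is already established in \cite{zubmar}, it suffices to describe $SS_r$, i.e., the elements $f\in Dist(T_r)$ satisfying $\Phi_{11}(f)=(x+y)f'$ for some $f'\in Dist(T_r)$. Expanding in the primitive-idempotent basis, write $f=\sum_{a,b}c_{a,b}h_{a|b}$ and split $f=f_1+f_2$ according to whether $p\nmid a+b$ or $p\mid a+b$.

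For $f_1$, Lemma~\ref{lm4} gives $(x+y)\Phi_{11}(h_{a|b})=(a+b)\Phi_{11}(h_{a|b})$, so whenever $p\nmid a+b$ the element $(a+b)^{-1}\Phi_{11}(h_{a|b})\in Dist(T_r)$ realizes $(\dagger_{11})$; each such $h_{a|b}$ therefore lies in $SS_r$, giving the first family. For $f_2$, the same lemma shows $\Phi_{11}(f_2)$ is annihilated by $x+y$, hence supported on $\{h_{c|d}:p\mid c+d\}$, whereas by Corollary~\ref{cor1} every element of $(x+y)\,Dist(T_r)$ is supported on the complementary set $\{h_{c|d}:p\nmid c+d\}$. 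These two supports are disjoint, so $(\dagger_{11})$ forces $\Phi_{11}(f_2)=0$, i.e., $f_2$ must be $s_{11}$-invariant.

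It then remains to describe the $s_{11}$-invariant combinations of $h_{a|b}$ with $p\mid a+b$. Since each $X_a$ is uniquely characterised by $\binom{x}{k}X_a=\binom{a}{k}X_a$ for $0\le k<q$, and since $s_{11}$ is an algebra endomorphism sending $\binom{x}{k}$ to $\binom{x-1}{k}$, a direct computation (combined with the Lucas-type observation that $\binom{n}{k}=\binom{a}{k}$ for all $k<q$ forces $n\equiv a\pmod{q}$) shows $s_{11}(X_a)=X_{(a+1)\bmod q}$ and, symmetrically, $s_{11}(Y_b)=Y_{(b-1)\bmod q}$. Hence $s_{11}$ permutes the index set $\{(a,b):p\mid a+b,\ 0\le a,b<q\}$ by $(a,b)\mapsto(a+1,b-1)\bmod q$ in orbits of length exactly $q$; a straightforward count yields $q/p$ such orbits, and their characteristic sums are precisely the elements $\sum_{i=0}^{q-1}h_{i|\overline{pl-i}}$ for $l=0,\ldots,q/p-1$, which is the second family.

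The step I expect to require the most care is the clean identification $s_{11}(X_a)=X_{(a+1)\bmod q}$ at the wraparound $a=q-1$: without the Lucas-type uniqueness statement just cited, one might worry that $X_{q-1}$ is sent outside the set of primitive idempotents. Once that is pinned down, the orbit combinatorics reproduces the displayed second family, and together with the idempotents of the first family one accounts for all of $SS_r=h(Z_r)$.
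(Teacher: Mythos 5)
Your first sentence is exactly right: the paper's own proof of Proposition~\ref{pr2} is a one-line citation of Theorem~9.10 of \cite{zubmar}, and that is the only logically safe route here, because Proposition~\ref{pr2} is genuinely about the image $h(Z_r)$ of the center, which requires having computed the center of $Dist(GL(1|1)_r)$.

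The alternative derivation you outline, however, has a circular reduction at the start. You write that ``the equality $h(Z_r)=SS_r$ for $GL(1|1)$ is already established in \cite{zubmar}, so it suffices to describe $SS_r$.'' That equality is \emph{not} in \cite{zubmar}: the algebra $SS_r$ of supersymmetric elements is a notion introduced in Section~3 of the present paper, and the equality $SS_r=h(Z_r)$ is Proposition~\ref{rovnost}, which comes \emph{after} Proposition~\ref{pr2} and quotes it (it uses the generators of $h(Z_r)$ from \ref{pr2} to get $\dim h(Z_r)$ and then checks that $\dim SS_r$ matches). So ``describe $SS_r$'' does not, by itself, say anything about $h(Z_r)$; you need an independent input about the center, and that input is Theorem~9.10 of \cite{zubmar}. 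Everything in your proposal after the first sentence characterizes $SS_r$ only; the word $Z_r$ never reappears.

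That said, the $SS_r$ analysis itself is correct and, in fact, nicer than what the paper does. Splitting $f=f_1+f_2$ by whether $p\nmid a+b$ or $p\mid a+b$, using Lemma~\ref{lm4} for $f_1$ and Corollary~\ref{cor1} together with the disjointness of supports to force $\Phi_{11}(f_2)=0$, and then identifying the $s_{11}$-orbits on the degenerate idempotents, is all sound. In particular, the observation $s_{11}(X_a)=X_{(a+1)\bmod q}$ (with the wraparound handled via $\binom{-1}{k}\equiv\binom{q-1}{k}\pmod p$ for $k<q$) is correct and gives the single $q$-cycle structure on each of the $q/p$ orbits cleanly. This gives a transparent, basis-of-idempotents computation of $\dim SS_r=q(q-\tfrac{q}{p})+\tfrac{q}{p}$, which could replace the matrix-rank argument used in the proof of Proposition~\ref{rovnost}. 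But it belongs under \ref{rovnost}, not under \ref{pr2}; for \ref{pr2} the reference to \cite{zubmar} is indispensable.
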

\begin{proof}
The proof follows from Theorem 9.10 of \cite{zubmar}.
\end{proof}

The elements listed in the above Proposition are generators of $SS_r$ for $GL(1|1)$ due to the following Proposition.

\begin{pr}\label{rovnost}
Let $G=GL(1|1)$. Then $SS_r=h(Z_r)$ for $r>0$.
\end{pr}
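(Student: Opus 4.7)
The plan is to prove both inclusions. The easy direction $h(Z_r) \subseteq SS_r$ is already in place: Proposition \ref{pr2} gives generators of $h(Z_r)$, and these are supersymmetric by Proposition \ref{pr1} (for the $h_{a|b}$ with $p \nmid (a+b)$) and Proposition \ref{pr3} (for the sums $\sum_i h_{i|\overline{pl-i}} = H_{pl}$); Lemma \ref{lm3} then ensures that all $K$-linear combinations remain in $SS_r$.

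For the reverse inclusion $SS_r \subseteq h(Z_r)$, the main preparatory step is to compute $s_{11}$ on the idempotent basis. Since $s_{11}$ is an algebra endomorphism of $Dist(T_r)$ with $s_{11}(x) = x-1$ and $s_{11}(y) = y+1$, each $s_{11}(X_a)$ is again an idempotent, and from $s_{11}(xX_a) = a\, s_{11}(X_a)$ one gets $x\, s_{11}(X_a) = (a+1) s_{11}(X_a)$. A direct check using $\binom{x-1}{k} = \sum_{j=0}^k (-1)^{k-j}\binom{x}{j}$ together with Lucas's theorem (which kills $\binom{q}{i}$ mod $p$ for $0 < i < q$) then identifies
\[s_{11}(X_a) = X_{a+1 \bmod q}, \qquad s_{11}(Y_b) = Y_{b-1 \bmod q},\]
with the wraparound $s_{11}(X_{q-1}) = X_0$ handled by $\binom{q}{q} = 1$. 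Consequently, $s_{11}(h_{a|b}) = h_{a+1 \bmod q \,|\, b-1 \bmod q}$ and $\Phi_{11}(h_{a|b}) = h_{a|b} - h_{a+1 \bmod q \,|\, b-1 \bmod q}$.

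Now given $f \in SS_r$, expand $f = \sum_{0 \le a,b < q} c_{a,b}\, h_{a|b}$. A reindexing yields
\[\Phi_{11}(f) = \sum_{a,b}\bigl(c_{a,b} - c_{a-1 \bmod q,\, b+1 \bmod q}\bigr)\, h_{a|b}.\]
By definition of $SS_r$ this is a multiple of $x+y$ in $Dist(T_r)$, so Corollary \ref{cor1} forces the coefficient of every $h_{a|b}$ with $p \mid (a+b)$ to vanish. On each anti-diagonal orbit $O_k = \{(a,b) : a+b \equiv k \pmod q\}$ with $p \mid k$, this is exactly the condition that $c_{a,b}$ be constant under the shift $(a,b) \mapsto (a-1, b+1)$ (mod $q$); since that shift is transitive on $O_k$ (of size $q$), $c$ is a single constant on $O_k$. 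Writing $k = pl$ for $l = 0, \ldots, q/p - 1$, the "ordinary part" of $f$ equals $\sum_l \lambda_l H_{pl}$. The "special part" $\sum_{p \nmid a+b} c_{a,b} h_{a|b}$ is tautologically a combination of the remaining generators from Proposition \ref{pr2}, so $f \in h(Z_r)$.

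The principal obstacle is the shift computation on idempotents; once $s_{11}(X_a) = X_{a+1 \bmod q}$ (with the correct wraparound) is established, the rest is elementary linear algebra, and the match is exact: the $q/p$ orbits $O_{pl}$ of size $q$ pair up precisely with the $q/p$ generators $H_{pl}$ listed in Proposition \ref{pr2}.
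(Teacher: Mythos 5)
Your proof is correct, and it takes a genuinely different route from the paper's. The paper proves the reverse inclusion $SS_r\subseteq h(Z_r)$ purely by a dimension count: it works in the monomial basis $\binom{x}{i}\binom{y}{j}$, writes down the matrices $C$ of multiplication by $x+y$ and $D$ of $\Phi_{11}$, computes ranks, nullities, and the dimension of an intersection of column spaces via orthogonal complements, and arrives at $\dim SS_r = q^2-\frac{q^2}{p}+\frac{q}{p}=\dim h(Z_r)$. Your argument instead diagonalizes the shift $s_{11}$ on the primitive idempotent basis, establishing $s_{11}(X_a)=X_{(a+1)\bmod q}$ and $s_{11}(Y_b)=Y_{(b-1)\bmod q}$ (I checked the hockey-stick/Lucas computation, including the wraparound case $a=q-1$ where $\binom{q}{q}=1$ gives $s_{11}(X_{q-1})=X_0$; note $(-1)^{q-1}=1$ in characteristic $p$). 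This converts the condition $\Phi_{11}(f)\in(x+y)Dist(T_r)$ into the explicit recursion $c_{a,b}=c_{(a-1)\bmod q,\,(b+1)\bmod q}$ on every anti-diagonal with $p\mid(a+b)$, which is an orbit of size $q$, so $c$ is constant there. This not only proves the equality of dimensions but directly exhibits the basis of $SS_r$, which the paper only reconstructs later in the general theorem of Section 7 (where it invokes Proposition \ref{pr2} as a black box for $GL(1|1)$). Your shift computation $s_{11}(h_{a|b})=h_{(a+1)\bmod q\,|\,(b-1)\bmod q}$ is the key new ingredient not made explicit in the paper, and it is exactly what makes the $GL(1|1)$ case transparent. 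One small remark: be careful with the sign convention for $\Phi_{11}$ — the paper's prose says $f-s_{11}(f)$ while its displayed formula gives $s_{11}(f)-f$; this does not affect your argument, but it is worth flagging.
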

\begin{proof}
Since $h(Z_r)\subseteq SS_r$ by Proposition \ref{star}, and Proposition \ref{pr2} together with Proposition \ref{torusmn} imply that $dim(h(Z_r))=q(q-\frac{q}{p})+\frac{q}{p}$, 
it is enough to verify that the dimension of $SS_r$ also equals $q(q-\frac{q}{p})+\frac{q}{p}$.

Denote by $\mathcal{B}$ the basis of $Dist(T_r)$ consisting of elements $\mathfrak{b}_{ij}=\binom{x}{i}\binom{y}{j}$ for $0\leq i,j<q$ listed in the lexicographic order on $(ij)$, and 
write $f\in Dist(T_r)$ in the form $f=\sum_{i,j=0}^{q-1} a_{ij} \mathfrak{b}_{ij}$.

The matrix $C$ of the linear map $L$ on $Dist(T_r)$ corresponding to the multiplication by $x+y$, given by
$\binom{x}{i}\binom{y}{j}\mapsto (x+y)\binom{x}{i}\binom{y}{j}$, in the basis $\mathcal{B}$ 
has coefficients given as $c_{i,j;i,j}=i+j$, $c_{i+1,j;ij}=i+1$, $c_{i,j+1;ij}=j+1$ and all other coefficients vanish.

By Corollary \ref{cor1}, the rank of the matrix $C$ is $q(q-\frac{q}{p})$, hence its nullity is $\frac{q^2}{p}$.
Then $L(Dist(T_r))=Col(C)$ is the orthogonal complement of the space $R=Nul(C^T)$.

Using formulas $\binom{x-1}{i}=\sum_{l=0}^{i} (-1)^{i-l} \binom{x}{l}$ and $\binom{y+1}{j}=\binom{y}{j-1}+\binom{y}{j}$ for $j>0$, we derive that 
$f$ belongs to $\Phi_{1,1}(Dist(T_r))$ if and only if $\sum_{i+j=q-2+k} a_{ij} =0$. Let $D$ be a matrix of $\Phi_{1,1}$ in the basis $\mathcal{B}$.
Then $\Phi_{1,1}(Dist(T_r))=Col(D)$, the rank of $D$ is $q^2-q$ and its nullity is $q$. The basis $\mathcal{R}'$ of the space $R'=Nul(D^T)$ consists of vectors 
$\rho'_k$ for $k=1, \ldots, q$ such that its entries at the positions $(ij)$ are $1$ if $i+j=q-2+k$, and vanish otherwise.
 
Denote by $r_{ij}$ the row of matrix $C$ consisting of entries $c_{i,j;u,v}$ for $0 \leq u,v<q$, and by $r_k=\sum_{i+j=q-2+k} r_{ij}$ for $k=1, \ldots, q$.
Then the entries of $r_k$ at positions $(ij)$ equal to $q-1+k$ if $i+j=q-3+k$; equal to $q-2+k$ if $i+j=q-2+k$; and vanish otherwise. 

If a vector $\sum_{k=1}^q A_k\rho'_k\in R'$ also belongs to $R$, then $\sum_{k=1}^q A_k r_k=0$.
Using the above description of $r_k$, this is equivalent to the linear system 
\[A_k(q-2+k)+A_{k+1}(q+k)=0\] for $k=1, \ldots, q-1$ and $A_q=0$. From this it is clear that $dim(R\cap R')=\frac{q}{p}$.

Since $dim(R)= q$, $dim(R')=\frac{q^2}{p}$ and $dim(R\cap R')=\frac{q}{p}$, we have $dim(R+R')=q+\frac{q^2}{q}-\frac{q}{p}$.
Since $\Phi_{1,1}(Dist(T_r))\cap L(Dist(T_r))=Col(D)\cap Col(C)=R^{\perp}\cap R'^{\perp}=(R+R')^{\perp}$, we get that 
the dimension of the space $\Phi_{1,1}(Dist(T_r))\cap L(Dist(T_r))$ equals $q^2-q-\frac{q^2}{p}+\frac{q}{p}$. By adding the dimension $q$ of the kernel of $\Phi_{1,1}$, we obtain that the dimension of $SS_r$ is $q^2-\frac{q^2}{p}+\frac{q}{p}$, as claimed.
\end{proof}

\begin{rem}
It is worth mentioning that the elements $h_{a|b}$ and their sums appearing in Proposition \ref{pr2} underline that it is natural to consider elements of $Dist(T_r)$ as 
a linear combinations of elements $h_{a_1, \ldots, a_m|b_1, \ldots, b_n}$ from Proposition \ref{torusmn} and congruences of type 
$a_1+\ldots + a_m+ b_1+\ldots + b_n \equiv c \pmod q$. 
\end{rem}

To help the reader to build a geometric intuition and prepare for the counting arguments appearing later, we consider the cases $GL(2|1)$ and $GL(3|1)$ before the case $GL(m|1)$.

\subsection{$GL(2|1)$}

Assume $G=GL(2|1)$.

\begin{pr}
The basis of supersymmetric elements in $SS_1$ consists of special elements $\tilde{h}_{a_1,a_2|b}$ and the sums $H_a$. The dimension of $SS_1$ is $p\frac{p^2-p+2}{2}$.
\end{pr}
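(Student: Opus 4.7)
The plan is to establish membership, linear independence, and spanning for the listed elements, and to check the dimension count. Membership in $SS_1$ is immediate: Proposition~\ref{pr1} covers the special $\tilde{h}_{a_1,a_2|b}$, and Proposition~\ref{pr3} covers the $H_a$. Linear independence is transparent when these are expanded in the primitive-idempotent basis $\{h_{a_1,a_2|b}\}$ of $Dist(T_1)$: special $\tilde{h}$'s are supported on $\Sigma_2$-orbits of special triples, while the $H_a$'s are supported on ordinary triples with pairwise distinct residues of $a_1+a_2+b$ modulo $p$, so all listed elements have pairwise disjoint supports. A direct count gives $\frac{p^2(p-1)}{2}$ special orbits (for each of the $p$ values of $b$, the $\binom{p-1}{2}+(p-1)=\frac{p(p-1)}{2}$ unordered pairs chosen from $\{0,\ldots,p-1\}\setminus\{(-b)\bmod p\}$) plus the $p$ elements $H_a$, giving the target $p\cdot\frac{p^2-p+2}{2}$.

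The crux is to show that $\dim SS_1$ does not exceed this count. The key tool is Lemma~\ref{induction}, which reduces $(\dagger_{11})$ to a slicing condition. Writing $f=\sum c_{a_1,a_2,b}\,h_{a_1,a_2|b}$ with $c_{a_1,a_2,b}=c_{a_2,a_1,b}$ forced by the symmetry in $x_1,x_2$, each frozen slice $g_{a_2}(x_1,y_1)=\sum_{a_1,b} c_{a_1,a_2,b}\,h_{a_1|b}$ must lie in $SS_1$ for $GL(1|1)$. By Propositions~\ref{rovnost} and~\ref{pr2}, this is equivalent to requiring that for each fixed $a_2$ the coefficients $c_{a_1^*(b),a_2,b}$, with $a_1^*(b):=(-b)\bmod p$, are constant as $b$ varies; write the common value as $\gamma(a_2)$. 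The symmetric slicing condition at each $a_1$, combined with $c_{a_1,a_2,b}=c_{a_2,a_1,b}$, forces the same function $\gamma$ to appear in both slicings, and no further constraint arises. The free parameters are therefore the $\frac{p^2(p-1)}{2}$ special coefficients (one per $\Sigma_2$-orbit of special triples) plus the $p$ scalars $\gamma(0),\ldots,\gamma(p-1)$, matching the dimension claim.

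Finally, I would identify the $\gamma$-directions with the $H_a$'s directly. Setting $\gamma=\delta_{\cdot,k}$ makes the nonzero ordinary coefficients $c_{a_1,a_2,b}$ equal to $1$ exactly on triples satisfying either $a_1=a_1^*(b)$ with $a_2=k$, or $a_2=a_1^*(b)$ with $a_1=k$; equivalently, exactly on ordinary triples with $a_1+a_2+b\equiv k \pmod p$. Hence the resulting element equals $H_k$, and the $\gamma$-subspace coincides with the span of $H_0,\ldots,H_{p-1}$. The step I expect to be the main obstacle is the slicing bookkeeping: one must confirm that the slice conditions at each $a_2$ and at each $a_1$, taken together with the $\Sigma_2$-symmetry, yield exactly $p$ independent parameters in the ordinary direction (no fewer due to redundancy and no more due to hidden compatibility constraints), with careful attention to the diagonal triples where $a_1=a_2=a_1^*(b)$.
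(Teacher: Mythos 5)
Your proposal is correct and follows essentially the same route as the paper: the paper's geometric picture of ordinary coefficients being forced constant along the lines $x_1=a$ and $x_2=a$ is precisely your slicing constraint $c_{a_1^*(b),a_2,b}=\gamma(a_2)$ (and its $\Sigma_2$-mirror), and the final count agrees. You fill in the justification the paper only gestures at — Lemma~\ref{induction} together with Propositions~\ref{rovnost} and~\ref{pr2} applied slice by slice, plus the explicit identification of the $\gamma$-directions with the $H_k$ and the disjoint-support argument for linear independence — so your version is a more rigorous rendering of the same idea.
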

\begin{proof}
We visualize the form of summands of $H_a$ as follows. Since the sum $a_1+a_2+b\equiv a \pmod p$ for each summand $h_{a_1,a_2|b}$ of $H_a$ remains constant modulo $p$, we represent 
ordinary $h_{a_1,a_2|b}$ by a lattice point $(a_1,a_2)$ in the square $[0,p)\times [0,p)$. Then $H_a$ is a sum of $h_{a_1,a_2|b}$, where $a_1+a_2+b\equiv a \pmod p$ and 
the corresponding marked lattice points $(a_1,a_2)$ lie on two line segments given by $x_1=a$ (where $a_2+b\equiv 0 \pmod p$) and $x_2=a$ (where $a_1+b\equiv 0 \pmod p$)
intersecting at the point $(a,a)$. 
Therefore there are $2p-1$ summands in $H_a$, using up $2p-1$ ordinary elements. 
If $f\in SS_r$ has a nonzero coefficient at an ordinary element $h_{a_1,a_2|b}$, then all the coefficients along the lines $x_1=a$ and $x_2=a$ have to be the same. If we add up all of the corresponding elements, we obtain a scalar multiple of $H_a$ that can be split off from $f$. 

Since the number of special elements $\tilde{h}_{a_1,a_2|b}$ is $p\binom{p}{2}$, the dimension formula follows.
\end{proof}

\begin{pr}
The basis of supersymmetric elements in $SS_r$ consists of special elements $\tilde{h}_{a_1,a_2|b}$ and the sums $H_a$. The dimension of $SS_r$ is 
$\frac{p^{3r-2}(p-1)^2+p^{2r-1}(p-1)}{2}+p^r$.
\end{pr}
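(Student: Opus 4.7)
The plan is to show the listed elements form a basis of $SS_r$, after which the dimension formula follows by counting. Membership in $SS_r$ is already established: Proposition \ref{pr1} gives it for the special $\tilde h_{a_1,a_2|b}$, and Proposition \ref{pr3} gives it for the sums $H_a$. Linear independence is immediate from disjointness of supports in the basis of Proposition \ref{torusmn}: distinct special $\tilde h$'s are supported on distinct $\Sigma_2$-orbits of special triples, distinct $H_a$'s are supported on distinct residue classes of $a_1+a_2+b \pmod q$ within the ordinary triples, and the special and ordinary supports are themselves disjoint.

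The core work is the span assertion. Given $f\in SS_r$, expand $f=\sum c_{a_1,a_2,b}\,h_{a_1,a_2|b}$ in the basis of Proposition \ref{torusmn} and split $f=f_s+f_o$ into its special and ordinary parts. The key structural observation is that $s_{ij}$ is an algebra automorphism of $Dist(T_r)$ that commutes with multiplication by $x_i+y_j$ (because $s_{ij}(x_i+y_j)=x_i+y_j$), and hence preserves the decomposition of $Dist(T_r)$ into $(x_i+y_j)\,Dist(T_r)$ (spanned by the $h$'s with $p\nmid a_i+b_j$, by Corollary \ref{cor1}) and its complement. Using $\binom{x-1}{k}=\sum_l(-1)^{k-l}\binom{x}{l}$ together with the hockey-stick identity, one checks that $s_{11}$ permutes the basis idempotents by $h_{a_1,a_2|b}\mapsto h_{a_1+1,a_2|b-1}$ (indices mod $q$). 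Comparing both sides of $(\dagger_{11})$ then forces the coefficients of $f$ along each $s_{11}$-orbit in the $(1,1)$-ordinary set $\{p\mid a_1+b\}$ to be constant, and the remark following the definition of supersymmetric element yields the analogous $s_{21}$-invariance on the $(2,1)$-ordinary set. On the special support, where both $a_1+b$ and $a_2+b$ are coprime to $p$, each $\Phi_{ij}$-image already lies in the $(i,j)$-special subspace automatically, so $f_s$ is constrained only by symmetry and is therefore a linear combination of special $\tilde h$'s.

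The main obstacle is to show that $f_o$ is a linear combination of $H_a$'s. The constraints force the coefficients of $f_o$ to be constant on each orbit of the group generated by $\sigma$ and by $s_{11}$, $s_{21}$ restricted to their respective ordinary subsets. Since all three operations preserve $a_1+a_2+b\pmod q$, each orbit lies in one residue class, matching the support of a single $H_a$; the nontrivial direction is to show that each residue class is a single orbit. I would establish this by explicit path construction: starting from any ordinary triple, first move via $s_{11}$ or $s_{21}$ (whichever applies) into the doubly-ordinary subset $D$ where both $p\mid a_1+b$ and $p\mid a_2+b$; then from $D$, reach any target in the same residue class by applying $s_{21}$ to set the second coordinate to its target value and $s_{11}$ to set the first coordinate to its target value, followed by a final $\sigma$ if needed, verifying at each step that the required ordinary condition holds throughout the intermediate triples. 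This yields $f_o=\sum_a\mu_a H_a$. For the dimension, the number of special $\tilde h_{a_1,a_2|b}$ equals $q\binom{N+1}{2}$, where $N=q(p-1)/p=p^{r-1}(p-1)$ is the number of valid values of $a_i$ for each fixed $b$; this evaluates to $\tfrac{p^{3r-2}(p-1)^2+p^{2r-1}(p-1)}{2}$. Adding the $q=p^r$ sums $H_a$ gives the claimed total.
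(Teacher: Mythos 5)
Your proof is correct, and it takes a genuinely different route from the paper's. The paper derives the key constraint — that the coefficients of $f\in SS_r$ must be constant along the ``mesh'' carrying a fixed residue class $a_1+a_2+b\equiv a\pmod q$ of ordinary idempotents — by appealing to Lemma \ref{induction} to freeze all variables except one $(x_s,y_t)$-pair and then invoking the $GL(1|1)$ description of $SS_r$ from Proposition \ref{pr2}; the orbit structure is then asserted geometrically as a mesh of line segments. You instead make the mechanism explicit: you observe that $s_{11}$ is an algebra automorphism of $Dist(T_r)$ and compute its action on the primitive idempotents to be the cyclic shift $h_{a_1,a_2|b}\mapsto h_{a_1+1,a_2|b-1}\pmod q$ (which can indeed be verified either by the binomial/hockey-stick calculation you sketch, or more quickly by the character argument $\binom{x_1}{j}s_{11}(X_{1,a})=\binom{a+1}{j}s_{11}(X_{1,a})$ combined with Lucas's theorem). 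Combined with Corollary \ref{cor1}, condition $(\dagger_{11})$ then literally reads as constancy along $s_{11}$-orbits inside the $(1,1)$-ordinary set, and the path-connectivity argument through the doubly-ordinary subset shows each residue class is a single orbit. Your approach is more self-contained — it does not rely on having already computed $SS_r$ for $GL(1|1)$ — and isolates a clean structural fact (automorphisms permute primitive idempotents cyclically) that the paper never states explicitly, whereas the paper's bootstrapping via Lemma \ref{induction} is shorter and scales more directly to the general $GL(m|1)$ and $GL(m|n)$ cases treated later. The linear-independence and dimension-count portions coincide with the paper's.

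One minor caution in the path-construction step: when moving from the doubly-ordinary subset to an arbitrary ordinary target, the order of moves matters. After using $\sigma$ to reduce to a $(1,1)$-ordinary target (i.e.\ $a_2''\equiv a\pmod p$), one must first apply $s_{21}$ (valid because the doubly-ordinary condition $a_1\equiv a$ licenses it, and $a_1$ stays fixed throughout) to bring $a_2$ to $a_2''$, and only then apply $s_{11}$ (now valid because $a_2''\equiv a$). Your wording ``verifying at each step'' indicates you are aware of this, but the dependence on the order should be stated explicitly since applying $s_{11}$ first can exit the $(1,1)$-ordinary set.
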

\begin{proof}
We need to explain how to scale up the argument from the previous Proposition from $p$ to $q$. We compute all expressions modulo $q$.

We visualize the form of summands of $H_a$ as follows. Since the sum $a_1+a_2+b\equiv a \pmod q$, for each summand $h_{a_1,a_2|b}$ of $H_a$, remains constant modulo $q$, we represent 
$h_{a_1,a_2|b}$ by a lattice point $(a_1,a_2)$ in the square $[0,q)\times [0,q)$. Then $H_a$ is a sum of $h_{a_1,a_2|b}$, where $a_1+a_2+b\equiv a \pmod q$ and 
the corresponding marked lattice points $(a_1,a_2)$ lie in the mesh given by lines $x_1=a$, $x_1=a+p$, \ldots, $x_1=a+q-p$ and $x_2=a$, $x_2=a+p$, \ldots, $x_2=a+q-p$. Break this mesh  
into $\frac{q}{p}\times \frac{q}{p}$ pieces bounded by squares of size $p$ obtained by shifting the lattice points inside the original square  $[0,p)\times [0,p)$.
In each such mesh there are $\frac{q^2}{p^2}(2p-1)$ marked lattice points corresponding to ordinary elements from $H_a$.
 
If $f\in SS_r$ has a nonzero coefficient at an ordinary element $h_{a_1,a_2|b}$, then all the coefficients corresponding to marked lattice points in the mesh as above have to be the same. If we add up all of the corresponding elements, we obtain a scalar multiple of $H_a$, that can be split off from $f$. 

Since the number of special elements $\tilde{h}_{a_1,a_2|b}$ is $q\binom{q-\frac{q}{p}+1}{2}$, the dimension formula follows.
\end{proof}

We will use an argument analogous to the above to scale up from $p$ to $q$ in general.


\subsection{$GL(3|1)$}

Assume $G=GL(3|1)$.

Define 
$T_{a,a'}$ for $0\leq a\leq a'<q$ to be a sum of $h_{a_1,a_2,a_3|b}$, where $a_1+a_2+a_3+b\equiv a+a' \pmod q$. 

We describe the generators of supersymmetric elements first in the case $q=p$, and then we scale it up to the case of general case analogously as above.

\begin{pr}
The basis of supersymmetric elements in $SS_1$ consists of special elements $\tilde{h}_{a_1,a_2,a_3|b}$ and elements $T_{a,a'}$ for $a\leq a'$.  The dimension of $SS_1$ is 
$p\binom{p+1}{3}+p+\binom{p}{2}$.
\end{pr}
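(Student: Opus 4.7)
The plan is to mirror the argument used for $GL(2|1)$, replacing one-parameter ``lines'' by the two-parameter multisets $\{a,a'\}$. I would carry out three steps — supersymmetry of the $T_{a,a'}$, linear independence, and spanning — and then do the dimension count.

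\emph{Supersymmetry of $T_{a,a'}$.} I read $T_{a,a'}$ as $\sum_{c=0}^{p-1}\tilde h_{a,a',c|-c}$, which is symmetric in $(a_1,a_2,a_3)$ by construction, so only $(\dagger_{11})$ needs checking. By Lemma \ref{induction}, this reduces to verifying that each frozen element $T_{a,a'}^{(a_2,a_3)}$, viewed in the $(x_1,y_1)$-torus of $GL(1|1)$, lies in $SS_1$ for $GL(1|1)$. A short multiset computation shows this frozen element equals $\sum_c h_{c|-c}$ when $\{a_2,a_3\}=\{a,a'\}$, a single special term $h_{u|v}$ with $u+v\not\equiv 0\pmod p$ when $\{a_2,a_3\}$ overlaps $\{a,a'\}$ in exactly the right way to make the multiset inclusion $\{a_2,a_3\}\subseteq\{a,a',c\}$ hold for a unique $c$, and zero otherwise. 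Each possibility lies in $SS_1$ for $GL(1|1)$ by Propositions \ref{pr2} and \ref{rovnost}.

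\emph{Linear independence.} I would observe that any ordinary $h_{a_1,a_2,a_3|b}$ lies in the support of a unique $T_{a,a'}$: if $a_{i_0}+b\equiv 0\pmod p$, then the multiset $\{a_j\}_{j\ne i_0}$ is independent of the choice of such $i_0$ (a direct check when several $a_i$'s equal $-b$), and yields the unordered pair $(a,a')$. Hence the supports of the $T_{a,a'}$ for $a\le a'$ are pairwise disjoint, and they are obviously disjoint from the supports of the special symmetrized elements $\tilde h_{a_1,a_2,a_3|b}$.

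\emph{Spanning and dimension.} Given $f\in SS_1$, split $f=f_s+f_o$ into its special and ordinary parts. Both pieces inherit symmetry from $f$, and $f_s$ already lies in $SS_1$ by Proposition \ref{pr1}, whence $f_o\in SS_1$ too. Applying Lemma \ref{induction} with $(s,t)=(1,1)$ to $f_o$: for each $(a_2,a_3)$, the frozen element $f_o^{(a_2,a_3)}$ lies in $SS_1$ for $GL(1|1)$, so by Propositions \ref{pr2} and \ref{rovnost} it decomposes as a combination of special $h_{a_1|b}$ plus a scalar multiple $\alpha^{(a_2,a_3)}\sum_i h_{i|-i}$. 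Symmetry of $f$ forces $\alpha^{(a_2,a_3)}$ to depend only on the unordered pair $\{a_2,a_3\}$, and cross-checking with the analogous decompositions coming from $(\dagger_{21})$ and $(\dagger_{31})$ — the chief technical obstacle — shows that for any $i_0$ with $a_{i_0}+b\equiv 0\pmod p$, the coefficient of $h_{a_1,a_2,a_3|b}$ in $f_o$ equals $\alpha(\{a_j\}_{j\ne i_0})$. This matches exactly the coefficient pattern of $\sum_{a\le a'}\alpha(\{a,a'\})T_{a,a'}$, so $f_o$ is the required combination. For the count, unordered triples from the $p-1$ values avoiding $-b$ number $\binom{(p-1)+2}{3}=\binom{p+1}{3}$, giving $p\binom{p+1}{3}$ special symmetrized elements, and unordered pairs $0\le a\le a'<p$ number $\binom{p+1}{2}=p+\binom{p}{2}$; summing yields the claimed dimension $p\binom{p+1}{3}+p+\binom{p}{2}$.
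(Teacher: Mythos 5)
Your proof is correct and follows essentially the same approach as the paper: split $f$ into its special and ordinary parts, reduce $(\dagger_{11})$ to the $GL(1|1)$ case via Lemma \ref{induction}, and use $\Sigma_3$-symmetry to identify the ordinary part with a linear combination of the $T_{a,a'}$, finishing with the same multiset count. Your algebraic reading $T_{a,a'}=\sum_{c}\tilde h_{a,a',c|-c}$ and the explicit case analysis of the frozen components are welcome: the paper's own proof relies on the geometric line-segment/hexagon picture and leaves the verification that $T_{a,a'}\in SS_1$ implicit, so your version is a touch more complete on that point (while the ``cross-checking with $(\dagger_{21}),(\dagger_{31})$'' you flag as the chief obstacle is really just a restatement of $\Sigma_3$-symmetry, already available).
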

\begin{proof}
We visualize the form of summands of $H_a$ as follows. Since the sum $a_1+a_2+a_3+b\equiv a \pmod p$ for each summand $h_{a_1,a_2,a_3|b}$ of $H_a$ remains constant modulo $p$, we represent ordinary $h_{a_1,a_2,a_3|b}$ by a lattice point $(a_1,a_2,a_3)$ in the square $[0,p)\times [0,p)\times[0,p)$. 

The marked lattice points $(a_1,a_2,a_3)$ corresponding to $T_{a,a}$ lie on three line segments given by 

$l_1: x_1=x_2=a$;

$l_2: x_1=x_3=a$;

$l_3: x_2=x_3=a$,

all intersecting at the point $(a,a,a)$. 
Therefore there are $3p-2$ summands in $T_{a,a}$, using up $3p-2$ ordinary elements. 

The marked lattice points $(a_1,a_2,a_3)$ corresponding to $T_{a,a'}$ for $a<a'$ lie on six line segments given by 

$l_1: x_2=a,  x_3=a'$; 

$l_2: x_1=a', x_2=a$;

$l_3: x_1=a', x_3=a$;

$l_4: x_2=a', x_3=a$;

$l_5: x_1=a, x_2=a'$;

$l_6: x_1=a,  x_3=a'$.

These line segments are organized in the shape of a hexagon $(l_1,l_2,l_3,l_4,l_5,l_6)$ such that neighboring line segments intersect at at unique point. 
There are $6(p-2)+6 = 6p-6$ summands in $T_{a,a'}$ for $a<a'$, using up $6p-6$ ordinary elements.

By an easy dimension count, all ordinary elements appear in precisely one of $T_{a,a'}$.

Assume that $f\in SS_1$ has a nonzero coefficient at an ordinary element $h_{a_1,a_2,a_3|b}$. 
Then this ordinary element belongs to one of $T_{a,a'}$, and the coefficients at $h_{c_1,c_2,c_3|d_1}$ along $T_{a,a'}$ have to be the same. If we add up all of the corresponding elements, we obtain a scalar multiple of $T_{a,a'}$ that can be split off from $f$.  

Since there are $p\binom{p+1}{3}$ special elements $\tilde{h}_{a_1,a_2, a_3|b}$, $p$ elements $T_{a,a}$ and $\binom{p}{2}$ elements $H_{a,a'}$ for $a<a'$, the dimension formula follows.
\end{proof}

\begin{pr}
The basis of supersymmetric elements in $SS_r$ consists of special elements $\tilde{h}_{a_1,a_2,a_3|b}$ and elements $T_{a,a'}$ for $a\leq a'$. The dimension of $SS_r$ is 
$q\binom{q-\frac{q}{p}+2}{3}+q+\frac{q}{p}\binom{p}{2}$.
\end{pr}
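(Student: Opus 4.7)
The plan is to scale up the $r=1$ proof of the previous proposition to general $r$, combining the mesh decomposition used in the $GL(2|1)$ scale-up with the line-configuration analysis used for the case $r=1$.

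Given a $T_{a,a'}$, set $\alpha = a \bmod p$ and $\alpha' = a' \bmod p$, and view its ordinary summands --- the $h_{a_1,a_2,a_3|b}$ with $a_1+a_2+a_3+b \equiv a+a' \pmod q$ having two coordinates $(a_i \bmod p, a_j \bmod p)$ forming the multiset $\{\alpha,\alpha'\}$ --- as marked lattice points in $[0,q)^3$. Partition $[0,q)^3$ into the $(q/p)^3$ small cubes of side $p$ obtained by translating $[0,p)^3$ along $p\mathbb{Z}^3$; inside each small cube, the marked points form the $p$-case configuration for $T_{\alpha,\alpha'}$: three concurrent line segments totalling $3p-2$ points when $\alpha = \alpha'$, or a hexagon of six line segments totalling $6p-6$ points when $\alpha \neq \alpha'$. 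A direct count verifies that the distinct $T_{a,a'}$'s partition the $q^4 - q(q-q/p)^3$ ordinary summands of $Dist(T_r)$, the assignment being fixed by the unique multiset of residues compatible with the sum constraint.

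Assume $f \in SS_r$ has a nonzero coefficient at an ordinary summand of $T_{a,a'}$. The condition $(\dagger_{i,1})$, combined with Corollary \ref{cor1}, translates into the relation $c_{\ldots, a_i, \ldots|b} = c_{\ldots, a_i - 1, \ldots|b+1}$ whenever $a_i + b \equiv 0 \pmod p$, where the $c$'s denote coefficients of $f$ in the idempotent basis. Iterating this relation along any single line segment forces its coefficients to be constant; iterating it $p$ times propagates the common value across the $q/p$ translated copies of the segment in neighboring small cubes; and the $\Sigma_3$-symmetry of $f$ in $x_1, x_2, x_3$ identifies coefficients on different line segments inside a single small cube. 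Combining these three mechanisms shows that all ordinary summands of $T_{a,a'}$ share a common coefficient in $f$, so a scalar multiple of $T_{a,a'}$ may be split off. After removing the ordinary parts, $f$ becomes a symmetric linear combination of special $h$'s, which by Proposition \ref{pr1} lies in the span of the $\tilde h_{a_1,a_2,a_3|b}$.

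For the dimension count, the symmetrized special elements are indexed by $b \in [0,q)$ together with a multiset $a_1 \leq a_2 \leq a_3$ drawn from the $q - q/p$ residues satisfying $a_i + b \not\equiv 0 \pmod p$, giving $q\binom{q - q/p + 2}{3}$ special generators. The distinct $T_{a,a'}$'s are parameterized by a multiset $\{\alpha,\alpha'\} \subseteq [0,p)$ together with a compatible value of $a + a' \bmod q$: there are $\binom{p+1}{2}$ multisets and $q/p$ compatible sums per multiset, totalling $(q/p)\binom{p+1}{2} = q + (q/p)\binom{p}{2}$. Summing yields the claimed dimension. The main obstacle will be the coefficient-propagation argument in the second paragraph --- rigorously showing that the relations from $(\dagger_{i,1})$ (via Corollary \ref{cor1}) together with $\Sigma_3$-symmetry suffice to pin down all ordinary coefficients of $f$ along each $T_{a,a'}$ simultaneously, paralleling the matrix-rank computation underlying Proposition \ref{rovnost}.
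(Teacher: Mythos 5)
Your overall strategy is the right one and faithfully fills in the details that the paper compresses into the single clause ``scale up from $p$ to $q$ as before.'' The partitioning of the ordinary $h$'s among the $T_{a,a'}$, the coefficient relation $c_{\ldots,a_i,\ldots|b}=c_{\ldots,a_i-1,\ldots|b+1}$ extracted from $(\dagger_{i,1})$ and Corollary \ref{cor1}, and the dimension tallies all check out (with indices understood modulo $q$, so each chain has length exactly $q$).

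One detail in your second paragraph is stated incorrectly and needs reworking: the $\Sigma_3$-symmetry of $f$ does \emph{not} identify coefficients on different line segments ``inside a single small cube.'' A permutation $\sigma\in\Sigma_3$ sends the lattice point $(a_1,a_2,a_3)$ to $(a_{\sigma(1)},a_{\sigma(2)},a_{\sigma(3)})$, and therefore moves the cube with offsets $(k_1,k_2,k_3)$ to the cube with offsets $(k_{\sigma(1)},k_{\sigma(2)},k_{\sigma(3)})$; only the $q/p$ cubes with $k_1=k_2=k_3$ are preserved. The correct mechanism for propagating coefficients across all of $T_{a,a'}$ is instead the following: a single $(\dagger_{i,1})$-chain already spans all $q/p$ cubes along the $i$-th axis; along that chain one encounters $q/p$ intersection points (one per cube), and at each of these one can switch to a chain along a different axis whose other fixed coordinate now runs over a different residue-$p$ block. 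Combining these chain--switches with the $\Sigma_3$-action (which swaps which axis is ``free'') gives a connected graph on all ordinary summands of $T_{a,a'}$, which is what forces a common coefficient. You correctly flag this propagation as the remaining work; phrased as the connectivity of the hexagon adjacency graph extended across the $(q/p)^3$ cubes it is not difficult, and it is morally the argument the paper invokes implicitly. The reference to ``the matrix-rank computation underlying Proposition \ref{rovnost}'' is a red herring, though: in the idempotent basis the chain relation replaces that rank count entirely.
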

\begin{proof}
Scale up from $p$ to $q$ as before. Since there are $q\binom{q-\frac{q}{p}+2}{3}$ special elements $\tilde{h}_{a_1,a_2, a_3|b}$, $q$ elements $T_{a,a}$ and $\frac{q}{p}\binom{p}{2}$ 
elements $T_{a,a'}$ for $a<a'$, the dimension formula follows. 
\end{proof}

\subsection{$GL(m|1)$}

Assume $G=GL(m|1)$.

As before, in the beginning, we assume that $r=1$. 

We will show that a basis of supersymmetric elements composed of ordinary elements are labeled by canonical $(m+1)$-tuples $(a_1^{m_1}a_2^{m_2}\ldots a_s^{m_s}|0)$ defined by 
$a_1=0<a_2<\ldots <a_s<p$, $m_1, \ldots, m_s> 0$ and $m_1+\ldots+m_s=m$.

Starting from the canonical term $(a_1^{m_1}a_2^{m_2}\ldots a_s^{m_s}|0)$ we construct additional $s-1$ terms 
\[(a_ia_1^{m_1-1}a_2^{m_2}\ldots a_s^{m_s}|p-a_i)\] for $i=2, \ldots, s$ that will be called {\it intersection points}. If $m_1>1$, then the canonical term 
$(a_1^{m_1}a_2^{m_2}\ldots a_s^{m_s}|0)$ is also  called an intersection point.

All of the above intersection points are lattice points on the line segment given by equations

\[\begin{aligned}
x_1+y_1=&0,\\
x_2=&a_1,\ldots, x_{m_1}=a_1,\\
x_{m_1+1}=&a_2, \ldots, x_{m_1+m_2}=a_2, \\
&\ldots \\
x_{m-m_s+1}=&a_s, \ldots, x_m=a_s,
\end{aligned}\]
where each $0\leq x_i<p$ for $i=1, \ldots, s$ and $0\leq y_1<p$.

Together with the above intersection points we also consider all of their permutations under the action of the symmetric group $\Sigma_m$ on $m$ elements. This way we obtain 
$\binom{m}{m_1, \ldots, m_s}$ intersection points from $(a_1^{m_1}a_2^{m_2}\ldots a_s^{m_s}|0)$ if $m_1>1$, 
$\binom{m}{m_1-1,m_2+1, \ldots, m_s}$ intersection points from $(a_2a_1^{m_1-1}\ldots a_s^{m_s}|p-a_2)$ and so on until 
$\binom{m}{m_1-1, \ldots, m_s+1}$ intersection points from $(a_2a_1^{m_1-1}\ldots a_s^{m_s}|p-a_s)$.
The total number of intersection points is 
\[\begin{aligned}
&\delta_{m_1>1}\binom{m}{m_1, \ldots, m_s}+\binom{m}{m_1-1,m_2+1, \ldots, m_s}+ \ldots + \binom{m}{m_1-1, \ldots, m_s+1}\\
&=\binom{m}{m_1, \ldots, m_s}m_1(\delta_{m_1>1}\frac{1}{m_1}+\frac{1}{m_2+1}+\ldots +\frac{1}{m_s+1}).
\end{aligned}\]

If $m_1>1$, then each intersection point $(b_1\ldots b_m|0)$ permuted from $(a_1^{m_1}\ldots a_s^{m_s}|0)$ lies on the intersection of $m_1$ line segments given by 
equations $x_i+y_1=0$, where $i$ is such that $b_i=a_1$, and $x_l=b_l$ for $l\neq i$.

Each intersection point $(b_1\ldots b_m|p-a_j)$ permuted from $(a_ja_1^{m_1-1}\ldots a_s^{m_s}|p-a_j)$, where $j=2, \ldots s$, lies on the intersection of 
$n_j+1$ line segments given by equations $x_i+y_1=0$, where $i$ is such that $b_i=a_j$, and $x_l=b_l$ for $l\neq i$.

Since each line segment contains precisely $s-\delta_{m_1,1}$ intersection points, the total number of line segments is
\[\begin{aligned}&\frac{\delta_{m_1>1}m_1\binom{m}{m_1, \ldots, m_s}+(m_2+1)\binom{m}{m_1-1,m_2+1, \ldots, m_s}+ \ldots +(m_s+1)\binom{m}{m_1-1,m_2, \ldots, m_s+1}}{s-\delta_{m_1,1}}\\
&=m_1\binom{m}{m_1, \ldots, m_s}.\end{aligned}\]

The supersymmetric element $H_{a_1^{m_1}\ldots a_s^{m_s}|0}$ associated with $(a_1^{m_1}\ldots a_s^{m_s}|0)$ is obtained by adding up all terms $h_{b_1, \ldots, b_m|c}$, each with a simple multiplicity,
where the lattice point $(b_1, \ldots, b_m|c)$ appears on one of the above line segments. 
In the sum 
\begin{equation} \tag{*} \sum_{\text{line segments}} \quad \sum_{\text{lattice points}} h_{b_1,\ldots, b_s|c},\end{equation} 
taken over all line segments associated with $(a_1^{m_1}\ldots a_s^{m_s}|0)$ 
and lattice points appearing in them, all lattice points that are not intersection points appear with the coefficient one. 
If $m_1>1$, then the intersection points permuted from 
$(a_1^{m_1}\ldots a_s^{m_s}|0)$ appear with the coefficient $m_1$. The intersection points permuted from $(a_ja_1^{m_1-1}\ldots a_s^{m_s}|p-a_j)$, where $j=2, \ldots s$,
appear with the coefficient $m_j+1$. Therefore 
\[\begin{aligned}&H_{a_1^{m_1}\ldots a_s^{m_s}|0}=\sum_{\text{line segments}} \quad \sum_{\text{lattice points}} h_{b_1,\ldots, b_s|c} \\
&- (m_1-1)\sum_{\substack{\text{intersection points}\\ \text{permuted from}\\(a_1^{m_1}\ldots a_s^{m_s}|0)}} h_{b_1,\ldots, b_s|c}
-\sum_{j=2}^s m_j\sum_{\substack{\text{intersection points}\\ \text{permuted from}\\(a_ja_1^{m_1-1}\ldots a_s^{m_s}|p-a_j)}} h_{b_1,\ldots, b_s|c}.
\end{aligned}\]

\begin{rem}Note that each lattice point on the above line segments is ordinary. 
The number $m_1\binom{m}{m_1,\ldots, m_s}p$ of all ordinary lattice points (counted with multiplicities) appearing in the sum $(*)$
equals the product of the number of line segments and $p$. 
Since the number of ordinary intersection points (counted with multiplicities) associated with $(a_1^{m_1}\ldots a_s^{m_s}|0)$ is 
\[\begin{aligned}&\delta_{m_1>1}m_1\binom{m}{m_1, \ldots, m_s}+(m_2+1)\binom{m}{m_1-1,m_2+1, \ldots, m_s}+ \ldots \\
&+(m_s+1)\binom{m}{m_1-1,m_2, \ldots, m_s+1}= m_1\binom{m}{m_1,\ldots, m_s}(s-\delta_{m_1,1}).\end{aligned}\]
Therefore the total number of ordinary lattice points appearing in the sum $(*)$ that are not intersection points equals $m_1\binom{m}{m_1,\ldots, m_s}(p-s+\delta_{m_1,1})$.
\end{rem} 

\begin{pr}
\[\begin{aligned}&\sum_{\substack{canonical \\(a_1^{m_1}\ldots a_s^{m_s}|0)}}
\binom{m}{m_1,\ldots, m_s}m_1(p-s+\delta_{m_1,1}+\delta_{m_1>1}\frac{1}{m_1}+\frac{1}{m_2+1}+\ldots +\frac{1}{m_s+1})]=\\
&\sum_{i=0}^{m-1} (-1)^{i+m-1} \binom{m}{i}p^{i+1}\end{aligned}\]
\end{pr}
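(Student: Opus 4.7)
The plan is to prove the identity by showing that both sides evaluate to the same integer, namely the total number of ordinary basis elements of $Dist(T_1)$, which equals $p^{m+1} - p(p-1)^m = p\bigl[p^m - (p-1)^m\bigr]$. (The count of special elements is $p(p-1)^m$: for each of the $p$ values of $b$, each of $a_1,\ldots,a_m$ must avoid the single residue $-b\bmod p$.)

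The right-hand side is the easier of the two. I would use $(-1)^{i+m-1} = -(-1)^{m-i}$ and complete the sum to run from $0$ to $m$, obtaining
\[
\mathrm{RHS} \;=\; -\sum_{i=0}^{m}(-1)^{m-i}\binom{m}{i}p^{i+1} + p^{m+1} \;=\; -p(p-1)^m + p^{m+1}
\]
by the binomial theorem, which is the target value.

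For the left-hand side, the Remark preceding the Proposition already identifies the bracketed expression as the total number of \emph{distinct} ordinary lattice points on the line-segment structure attached to the canonical tuple $(0^{m_1} a_2^{m_2} \ldots a_s^{m_s}|0)$. So the strategy is to show that, as the canonical tuple varies, these structures partition the set of ordinary basis elements. I would set up a map sending an ordinary $h_{a_1, \ldots, a_m|b}$ to its canonical tuple: let $c = -b \bmod p$ (so $c = 0$ when $b = 0$, otherwise $c > 0$); if $b = 0$, take the canonical tuple read off from the sorted multiset $\{a_1, \ldots, a_m\}$ (which contains $0$ by ordinariness); if $b > 0$, form a new multiset by replacing one copy of $c$ (present by ordinariness) with a copy of $0$, and take its canonical form. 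Then one checks that the element is a lattice point on some segment of this canonical tuple's structure, and on no segment of any other.

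The main obstacle will be the uniqueness half of the partition. Careful casework is needed for the boundary situations $M_0 = 0$ (the element has no zeros, forcing $m_1 = 1$ in the canonical tuple, where the canonical term itself is not counted as an intersection point) and $M_c = 1$ for $c > 0$ (so after the replacement the value $c$ disappears from the canonical tuple altogether). Inspecting the defining equations of the segments in these cases should confirm that the multiset of $(a_1,\ldots,a_m)$ together with $b$ uniquely determines the canonical tuple via the above algorithm. Once this is established, the LHS counts each ordinary element exactly once and therefore also equals $p[p^m - (p-1)^m] = \mathrm{RHS}$.
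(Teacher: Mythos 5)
Your proposal is correct and follows essentially the same route as the paper: both identify the bracketed factor in each summand (via the preceding Remark) as the number of distinct ordinary lattice points attached to that canonical tuple, and then equate the left side to the total count of ordinary basis elements by the partition property. The only cosmetic difference is that you evaluate the right side by complementary counting and the binomial theorem, whereas the paper cites inclusion--exclusion; these give the same computation of $p^{m+1}-p(p-1)^m$, and the partition fact that you rightly flag as the real content is likewise asserted (and deferred to the later equivalence-class analysis) in the paper.
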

\begin{proof} Each ordinary point corresponds to a summand of precisely one canonical $H_{a_1^{m_1}\ldots a_s^{m_s}|0}$ as above. Therefore if we add up the number of all the ordinary points on various canonical $H_{a_1^{m_1}\ldots a_s^{m_s}|0}$, we get the number of all ordinary points. Using the inclusion-exclusion principle, it is easy to se that the number of all ordinary points is 
$\sum_{i=0}^{m-1} (-1)^{i+m-1} \binom{m}{i}p^{i+1}$.
\end{proof}

If $r>1$, then the canonical $(m+1)$-tuple $(a_1^{m_1}a_2^{m_2}\ldots a_s^{m_s}|b)$, defined by 
$a_1=0<a_2<\ldots <a_s<p$, $m_1, \ldots, m_s> 0$, $m_1+\ldots+m_s=m$ and $0\leq b<q$ is divisible by $p$,
is obtained by scaling up the canonical element $(a_1^{m_1}a_2^{m_2}\ldots a_s^{m_s}|0)$ from the case $q=p$ earlier.

Analogously as before, we construct a mesh of line segments corresponding to the canonical element $(a_1^{m_1}a_2^{m_2}\ldots a_s^{m_s}|b)$ by first determining all intersection points, 
and then defining line segments that go through these intersection points. Formulating the details of this construction is left to the reader (Also, see the last section of the paper for an alternative formulation).
Finally, define 
\[H_{a_1^{m_1}\ldots a_s^{m_s}|b}=\sum_{\text{line segments}} \quad \sum_{\text{lattice points}} h_{b_1,\ldots, b_s|c}.\]

\begin{pr}\label{m1}
The basis of $SS_r$ for $GL(m|1)$ consists of special elements $\tilde{h}_{a_1, \ldots, a_m|b}$ together with elements
$H_{a_1^{m_1}\ldots a_s^{m_s}|b}$, where $(a_1^{m_1}a_2^{m_2}\ldots a_s^{m_s}|b)$ is canonical. The dimension of $SS_r$ equals 
$q \binom{q-\frac{q}{p}+m-1}{m}+\frac{q}{p}\binom{p+m-2}{m-1}$.
\end{pr}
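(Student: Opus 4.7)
The plan is to follow the template of the $GL(2|1)$ and $GL(3|1)$ propositions above, scaled to arbitrary $m$. The proof proceeds in three substantive steps (membership in $SS_r$, linear independence, spanning) followed by a dimension count. Membership is easy: the special symmetrized elements are supersymmetric by Proposition \ref{pr1}, and for each canonical element $H_{a_1^{m_1}\ldots a_s^{m_s}|b}$ is manifestly $\Sigma_m$-invariant by construction, while $(\dagger_{i,1})$ follows via Lemma \ref{induction}---freezing all variables other than $x_i$ and $y_1$, the relevant slice reduces to a scalar multiple of one of the sums $H_a$ from Proposition \ref{pr3} in $GL(1|1)$-coordinates.

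Linear independence rests on the disjointness claim made in the Remark preceding the Proposition: distinct canonical tuples produce meshes whose ordinary lattice points are disjoint, so the corresponding $H$'s have pairwise disjoint supports in the basis $\mathcal{B}_r$. Since ordinary and special lattice points are themselves disjoint, the two families cannot interfere, and distinct $\tilde h$'s likewise involve distinct monomials.

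Spanning is the heart of the argument. Given $f \in SS_r$, I first symmetrize with respect to $\Sigma_m$ (an operation preserving $SS_r$). Combining the condition $\Phi_{i,1}(f)=(x_i+y_1)f'_{i,1}$ with Corollary \ref{cor1} and Lemma \ref{lm4}, one derives a transfer rule: the coefficient of $f$ at an ordinary $h_{b_1,\ldots,b_m|c}$ must match (with an explicit multiplicity determined by the combinatorics of $\Phi_{i,1}$) the coefficient at $h_{b_1,\ldots,b_i-1,\ldots,b_m|c+1}$ whenever the latter is also ordinary. Iterating this rule along the line segments defining a canonical mesh forces the ordinary part of $f$ to be constant on each mesh, so $f$ splits as a linear combination of $H$'s plus a supersymmetric contribution supported on special lattice points; the latter is a combination of $\tilde h$'s by symmetry alone. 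The main technical obstacle is organizing the propagation for $r > 1$: the mesh is the scaled-up version of the $r=1$ construction (with translates of the $[0,p)\times\cdots\times[0,p)$ pattern tiling the big box $[0,q)^m$), and one must verify that the chain of $(\dagger_{i,1})$-relations actually connects every ordinary lattice point on the same mesh to every other, with compatible multiplicities, so that subtracting the appropriate scalar multiple of the canonical $H$ cancels the entire ordinary orbit at once.

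For the dimension, for each of the $q$ values of $b$ the number of multisets $\{a_1,\ldots,a_m\}$ with $p \nmid a_i+b$ for every $i$ is $\binom{q-q/p+m-1}{m}$, accounting for the special contribution. For the $H$'s, canonical tuples $(a_1^{m_1}\ldots a_s^{m_s}|b)$ require $b$ divisible by $p$ (giving $q/p$ choices), $s-1$ distinct values $0 < a_2 < \cdots < a_s < p$ (giving $\binom{p-1}{s-1}$ choices), and a composition $m_1+\cdots+m_s=m$ with positive parts (giving $\binom{m-1}{s-1}$); Vandermonde's identity $\sum_{s=1}^m \binom{p-1}{s-1}\binom{m-1}{s-1}=\binom{p+m-2}{m-1}$ yields the second term.
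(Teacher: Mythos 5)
Your argument runs along the same lines as the paper's proof: membership via Proposition \ref{pr1} and Lemma \ref{induction}, spanning by slicing to $GL(1|1)$ and peeling off canonical $H$'s lexicographically, and a combinatorial count for the dimension (your Vandermonde identity $\sum_{s}\binom{p-1}{s-1}\binom{m-1}{s-1}=\binom{p+m-2}{m-1}$ is simply an alternate route to the paper's stars-and-bars count of size-$m$ multisets in $\{0,\ldots,p-1\}$ that contain $0$). One point to tighten: the elementary transfer step $(b_i,c)\mapsto(b_i-1,c+1)$ preserves $b_i+c$ \emph{exactly} and therefore cannot, on its own, link lattice points across the $p$-translated blocks that appear once $r>1$; what Lemma \ref{induction} together with Propositions \ref{pr2} and \ref{rovnost} actually yields is the stronger statement that the coefficient is literally constant (not proportional up to a ``multiplicity'') over the entire orbit $\{(b_i',c')\ :\ b_i'+c'\equiv b_i+c \pmod q\}$, and this orbital constancy is precisely the connectivity you correctly flag as the technical obstacle.
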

\begin{proof}
Let $f\in SS_1$ for $GL(m|1)$.
Using Lemma \ref{induction},  the structure of $SS_1$ for $GL(1|1)$ and a suitable extension by inert variables to $GL(m|1)$, we observe that if the coefficients of one of 
$h_{b_1, \ldots, b_s|c}$ that appears as a summand of $H_{a_1^{m_1}\ldots a_s^{m_s}|b}$ is non-zero, then the coefficients of all summands in $H_{a_1^{m_1}\ldots a_s^{m_s}|b}$ are equaled to this nonzero number. Using an induction on the lexicographic order, we can subtract from $f$ a linear combination of $H_{a_1^{m_1}\ldots a_s^{m_s}|b}$ so that the difference will be a linear combination of $h_{b_1, \ldots, b_s|c}$ for special $(b_1, \ldots, b_s|c)$.

To determine the number of special elements $\tilde{h}_{a_1, \ldots, a_m|b}$, we first choose $q$ different values for $b$. For each choice of $b$, 
the number of possible $a_1,\ldots, a_m$ equals the number of combinations with a replacement of $q-\frac{q}{p}$ elements taken $m$ at a time, which equals 
$\binom{q-\frac{q}{p}+m-1}{m}$. Therefore the number of special elements is $q\binom{q-\frac{q}{p}+m-1}{m}$.

As for the number of canonical elements, there are $\frac{q}{p}$ ways to choose $b$. Since $a_1=0$ and $m_1>0$, the number of possible $a_1^{m_1}\ldots a_s^{m_s}$ equals the number of combinations with a replacement of $p$ elements taken $m-1$ at a time, which equals $\binom{p+m-2}{m-1}$. Thus the number of canonical elements equals $\frac{q}{p}\binom{p+m-2}{m-1}$, and the formula for the dimension of $SS_r$ follows.  
\end{proof}

\section{Canonical elements for $GL(m|n)$}

In this section, we define canonical elements for $GL(m|n)$ and compute their cardinality. 
Let $G=GL(m|n)$ and $r>0$. 

\begin{df}
For every $(a_1, \ldots, a_m|b_1, \ldots, b_n)$, where $0\leq a_1, \ldots, a_m;b_1, \ldots, b_n <q$ define
the defect $d$ to be the maximal number $d$ for which  there are index sets $I=\{i_1, \ldots, i_d\}\subset \{1, \ldots, m\}$ and 
$J=\{j_1, \ldots, j_d\}\subset\{1, \ldots, n\}$ such that $a_{i_t}+b_{j_t}\equiv 0 \pmod p$ for $t=1, \ldots, d$. 

We say that $(a_1, \ldots, a_m|b_1, \ldots, b_n)$ of the defect $d=0$ is {\it canonical} if $a_1\leq \ldots \leq a_m$ and $b_1 \leq \ldots \leq b_n$.
In this case $a_i+b_j\not\equiv 0 \pmod p$ for every $1\leq i \leq m$ and $1\leq j\leq n$.

We say that $(a_1, \ldots, a_m|b_1, \ldots, b_n)$ of the defect $d>0$ is {\it canonical} if there are indices $e,f$ such that $d=\min\{e,f\}$ and
\[a_1=\ldots =a_e=0; \,\, 0<a_{e+1}\leq \ldots \leq a_m <p,\]
\[b_1=\ldots =b_{f-1}=0; \,\, b_f \text{ is divisible by } p; \,\, 0<b_{f+1} \leq \ldots \leq b_n<p,\]
and 
\[a_i+b_j\not\equiv 0 \pmod p \text{ for } i>e \text{ and } j>f.\]

Denote by $c_{m,n}(d,q)$ the number of canonical elements of the defect $d$ in $Dist(T_r)$ for $GL(m|n)$. 
\end{df}
If $d>0$ and $q>p$, then $c_{m,n}(d,q)=\frac{q}{p}c_{m,n}(d,p)$. An analogous property is not valid when $d=0$.

Let us recall the well-known formula for the number of ways to choose elements $s_1\leq \ldots \leq s_a$ from a set of cardinality $b$.
This equals the number of combinations with repetitions of $a$ elements selected from the set of cardinality $b$, which is 
$\binom{a+b-1}{a}$. 
Denote by $l$ the cardinality of the set $\{s_1, \ldots, s_a\}$. Then $l$ varies from 1 to $\min\{a,b\}$.
There are $\binom{b}{l}$ ways to choose $l$ distinct entries. After we fix these $l$ distinct entries, 
the remaining entries correspond to permutations of length $a-l$ with repetitions from the set of cardinality $l$, and their number is
$\binom{a-l+l-1}{a-l}=\binom{a-1}{a-l}$.
Therefore we conclude that the number of ways to choose elements $s_1\leq \ldots \leq s_a$ from a set of cardinality $b$ is 
\[\binom{a+b-1}{a}=\sum_{l=1}^{\min\{a,b\}} \binom{b}{l}\binom{a-1}{a-l},\]
where $\binom{b}{l}\binom{a-1}{a-l}$ counts the number of terms $s_1\leq \ldots \leq s_a$ that have $l$ distinct entries.

\begin{df}
Denote by $c_{m,n}(q)$ the number of $(a_1, \ldots, a_m|b_1, \ldots, b_n)$ such that $0\leq a_1\leq \ldots \leq a_m< q$, $0\leq b_1\leq \ldots \leq b_n<q$, 
where $a_i+b_j\not\equiv 0 \pmod p$ for every $i$ and $j$.

Denote by $c'_{m,n}(p-1)$ the number of $(a_1, \ldots, a_m|b_1, \ldots, b_n)$ such that $0< a_1\leq \ldots \leq a_m< p$, $0< b_1\leq \ldots \leq b_n<p$, 
where $a_i+b_j\not\equiv 0 \pmod p$ for every $i$ and $j$.
\end{df}

\begin{lm}\label{c}
If $m,n>0$, then 
\[c_{m,n}(p)=\sum_{l=1}^{\min\{n,p-1\}} \binom{p}{l}\binom{n-1}{n-l}\binom{m+p-l-1}{m}.\]
Also, $c_{m,0}(p)=\binom{m+p-1}{m}$ for $m>0$, $c_{0,n}(p)=\binom{n+p-1}{n}$ for $n>0$, and $c_{0,0}(p)=1$.
If $m,n>0$, then 
\[c'_{m,n}(p-1)=\sum_{l=1}^{\min\{n,p-2\}} \binom{p-1}{l}\binom{n-1}{n-l}\binom{m+p-l-2}{m}.\]
Also, $c'_{m,0}(p-1)=\binom{m+p-2}{m}$ for $m>0$, $c'_{0,n}(p-1)=\binom{n+p-2}{n}$ for $n>0$, and $c'_{0,0}(p-1)=1$.
\end{lm}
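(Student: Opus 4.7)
The plan is to prove both formulas by conditioning on $l$, the number of distinct values appearing in the $b$-sequence; the edge cases with $m = 0$ or $n = 0$ follow immediately from the combinatorial identity recalled in the paragraph preceding the lemma (with no $b$-entries there is no constraint on the $a$-sequence, and symmetrically, while the empty tuple accounts for the $1$). I will concentrate on the case $m, n > 0$.

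For $c_{m,n}(p)$, I would first enumerate the $b$-sequences. By the recalled identity, the number of weakly increasing sequences $0 \le b_1 \le \ldots \le b_n < p$ whose set of distinct entries has exactly $l$ elements equals $\binom{p}{l}\binom{n-1}{n-l}$: one selects the $l$ distinct values from $\{0, 1, \ldots, p-1\}$ in $\binom{p}{l}$ ways and then arranges repetitions so as to realize exactly those values, contributing the factor $\binom{n-1}{n-l}$.

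Next, given such a $b$-sequence with distinct-value set $\{v_1, \ldots, v_l\}$, the condition $a_i + b_j \not\equiv 0 \pmod p$ for all $i, j$ is equivalent to forbidding each $a_i$ from the set of $l$ distinct residues $\{-v_1, \ldots, -v_l\} \pmod p$ inside $\{0, 1, \ldots, p-1\}$. Hence each $a_i$ must be drawn from a fixed admissible set of $p - l$ residues, and the count of weakly increasing sequences $a_1 \le \ldots \le a_m$ of length $m$ from a set of size $p - l$ is $\binom{m+p-l-1}{m}$. Multiplying and summing over $l$ yields the stated formula; the range $1 \le l \le \min\{n, p-1\}$ is forced because $l \le n$ always, and the $l = p$ term would contribute the vanishing factor $\binom{m-1}{m}$ for $m \ge 1$.

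The argument for $c'_{m,n}(p-1)$ is structurally identical, with the restriction $a_i, b_j \in \{1, \ldots, p-1\}$ replacing the earlier ranges. There are $\binom{p-1}{l}$ choices for the set of distinct $b$-values and $\binom{n-1}{n-l}$ ways to realize them as a weakly increasing sequence of length $n$; the forbidden residues $\{-v_1, \ldots, -v_l\} \pmod p$ again lie in $\{1, \ldots, p-1\}$ because no $v_k$ equals $0$, leaving $p - 1 - l$ admissible values for each $a_i$ and $\binom{m+p-l-2}{m}$ weakly increasing $a$-sequences. I do not anticipate a genuine obstacle: the only points requiring care are the parametrization of weakly increasing sequences by number of distinct entries and the identification of the correct admissible residue sets, both of which are direct consequences of the identity recalled immediately before the lemma.
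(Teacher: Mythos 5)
Your proof is correct and follows the same strategy as the paper's: condition on the number $l$ of distinct values in the $b$-sequence, count those sequences as $\binom{p}{l}\binom{n-1}{n-l}$ (or $\binom{p-1}{l}\binom{n-1}{n-l}$ in the primed case), and observe that each $a_i$ is then forbidden from exactly $l$ residues, leaving a set of size $p-l$ (resp.\ $p-1-l$) for the weakly increasing $a$-sequence. The one place you are slightly more explicit than the paper is in the $c'$ case, where you justify that the forbidden residues $-v_k \bmod p$ remain in $\{1,\ldots,p-1\}$ because no $v_k=0$; this is a small point the paper leaves implicit but it is exactly the right check.
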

\begin{proof}

Assume $m,n>0$. The number of possible entries $0\leq b_1\leq \ldots \leq b_n<p$ is $\sum_{l=1}^{\min\{n,p\}} \binom{p}{l}\binom{n-1}{n-l}$.
For every such entry, where $\{b_1, \ldots, b_n\}$ has the cardinality $l<p$, the entries in $a_1\leq \ldots \leq a_m$ are taken from the set of cardinality 
$p-l$ and their number is $\binom{m+p-l-1}{m}$. Since there are no possible values for $a_1, \ldots, a_m$ if $l=p$, the formula for $c_{m,n}(p)$ follows.

The number of possible entries $0< b_1\leq \ldots \leq b_n<p$ is $\sum_{l=1}^{\min\{n,p-1\}} \binom{p-1}{l}\binom{n-1}{n-l}$.
For every such entry, where $\{b_1, \ldots, b_n\}$ has the cardinality $l<p-1$, the entries in $a_1\leq \ldots \leq a_m$ are taken from the set of cardinality 
$p-1-l$ and their number is $\binom{m+p-l-2}{m}$. Since there are no possible values for $a_1, \ldots, a_m$ if $l=p-1$, the formula for $c'_{m,n}(p-1)$ follows.

The formulae for the cases $m=0$ or $n=0$ are clear.
\end{proof}

Consider permutations of length $a$ with repetitions from the set $S$ of cardinality $c$ divided into different disjoint subsets $S_1, \ldots, S_s$ of cardinalities $b_1, \ldots, b_s$.
The number of all such permutations equals $\binom{a+c-1}{a}$. We will determine the number of such permutations that contains elements of precisely $l$ subsets $S_i$. For each such permutation, denote by $1\leq i_1<\ldots <i_l\leq s$ those indices for which the corresponding number $n_{i_1}, \ldots, n_{i_l}$ of elements from subsets $S_{i_1}, \ldots, S_{i_l}$ is positive
and $n_{i_1}+\ldots +n_{i_l}=a$. For each choice of indices $1\leq i_1<\ldots <i_l\leq s$, and every positive $n_{i_1}, \ldots, n_{i_l}$ such that
$n_{i_1}+\ldots +n_{i_l}=a$, there are $\binom{b_{i_1}+n_{i_1}-1}{n_{i_1}-1}$ ways to choose $n_{i_1}$ elements with repetitions from the set $S_{i_i}$.
Therefore the number of permutations of length $a$ with repetitions from the set $S$ of cardinality $b$ that contains elements of precisely $l$ subsets $S_i$ equals
\[\sum_{1\leq i_1< \ldots < i_l\leq s}\sum_{\substack{n_{i_1}, \ldots, n_{i_l}>0\\n_{i_1}+\ldots +n_{i_l}=a}} \prod_{j=1}^l\binom{b_{i_j}+n_{i_j}-1}{n_{i_j}-1}.\]
If the cardinalities $b_1=\ldots=b_s=b$, then the above formula simplifies to 
\[\binom{s}{l}\sum_{\substack{n_1, \ldots, n_l>0\\n_1+\ldots +n_l=a}} \prod_{j=1}^l\binom{b+n_j-1}{n_j-1}.\]

We will count the number $c_{m,n}(d,q)$ of canonical elements $(a_1, \ldots, a_m|b_1, \ldots, b_n)$ of defect $d$ in $Dist(T_r)$ for $GL(m|n)$.

\begin{lm}\label{c0}
The number $c_{m,n}(0,q)$ of canonical elements of defect zero in $Dist(T_r)$ for $GL(m|n)$ equals 
\[c_{m,n}(q)=  \sum_{l=1}^{\min\{p-1,n\}} \binom{p}{l}\binom{q-\frac{q}{p}l+m-1}{m}\sum_{\substack{n_1, \ldots, n_l>0\\n_1+\ldots +n_l=n}} \prod_{j=1}^l\binom{\frac{q}{p}+n_j-1}{n_j}  \] 
\end{lm}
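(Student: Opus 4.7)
The plan is to partition the canonical tuples of defect zero according to the set of residues modulo $p$ appearing among the $b_j$'s. For such a canonical tuple $(a_1,\ldots,a_m|b_1,\ldots,b_n)$, let $R=\{\overline{b_1},\ldots,\overline{b_n}\}\subseteq\{0,1,\ldots,p-1\}$ denote the set of residues of the $b_j$ modulo $p$, and set $l=|R|$. The defect-zero condition $a_i+b_j\not\equiv 0\pmod{p}$ is equivalent to requiring that every $\overline{a_i}$ avoids $\{-r\bmod p:r\in R\}$, a set of cardinality $l$, so the admissible residue classes for the $a_i$ form a set of cardinality $p-l$. Because ``canonical'' means the entries are weakly increasing, each tuple is uniquely recovered from its underlying multiset, so throughout the argument I will count multisets rather than sequences.

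Next, I will count the $b$-part with $l=|R|$ fixed. First choose $R$ in $\binom{p}{l}$ ways. Then decompose the multiset $\{b_1,\ldots,b_n\}$ by residue class: this amounts to choosing positive integers $n_1,\ldots,n_l$ with $n_1+\cdots+n_l=n$, where $n_j$ is the number of $b$'s falling into the $j$-th chosen residue class. Within any single residue class in $[0,q)$ there are exactly $q/p$ values, so the number of size-$n_j$ multisets taken from that class equals $\binom{q/p+n_j-1}{n_j}$. Summing over compositions gives the $b$-count
\[\binom{p}{l}\sum_{\substack{n_1,\ldots,n_l>0\\ n_1+\cdots+n_l=n}}\prod_{j=1}^{l}\binom{q/p+n_j-1}{n_j}.\]

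For the $a$-part, the forbidden residues form a set of size $l$ depending only on $R$, so the admissible residues form a set of size $p-l$, and the set of admissible integers in $[0,q)$ has cardinality $(p-l)(q/p)=q-lq/p$. A weakly increasing $m$-tuple drawn from this set is a multiset of size $m$ from a set of size $q-lq/p$, of which there are $\binom{q-lq/p+m-1}{m}$. Crucially, this count depends on $R$ only through its cardinality $l$, so it factors out of the $R$-summation.

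Multiplying the two factors and summing over $l$ yields the stated formula. The summation range $1\le l\le\min\{p-1,n\}$ arises because $n\ge 1$ forces $l\ge 1$, $l\le n$ (each chosen residue must be represented), and for $m\ge 1$ the case $l=p$ contributes the vacuous count $\binom{m-1}{m}=0$ and can be omitted. The only subtle bookkeeping point is checking that the residue-class decomposition of the $b$-multiset does not interact with the global weakly-increasing sorting; but this is automatic, since a weakly increasing tuple is determined by its multiset, and a multiset decomposes uniquely into its residue classes. I do not foresee a serious obstacle in carrying out this argument.
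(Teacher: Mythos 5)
Your proposal is correct and follows essentially the same approach as the paper: both partition defect-zero canonical tuples by the number $l$ of distinct residues modulo $p$ occurring among the $b_j$'s, count the $b$-multisets by choosing the residue set and a composition of $n$, and then count the admissible $a$-multisets from the $q-lq/p$ remaining values. The only cosmetic difference is that the paper phrases the $b$-count in the language of ``permutations of length $n$ with repetitions'' (its general formula established just before the lemma), whereas you speak directly of multisets; the substance is identical.
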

\begin{proof}
Consider permutations of length $n$ with repetitions from the set $S$ of $\{0, \ldots, q-1\}$ divided into disjoint subsets $S_1, \ldots, S_p$ of cardinalities $\frac{q}{p}$ such that each subset $S_i$ consists of those elements $x$ of $S$ such that $x\equiv i \pmod p$. Then the number of such permutations that contain elements from precisely $l$ subsets $S_i$ equals 
\[\binom{p}{l}\sum_{\substack{n_1, \ldots, n_l>0\\n_1+\ldots +n_l=n}} \prod_{j=1}^l\binom{\frac{q}{p}+n_j-1}{n_j}.\]
This equals the number of ways to choose $0\leq b_1 \leq \ldots \leq b_n<q$ such that residues $b_i \pmod p$ cover precisely $l$ congruence classes modulo $p$.

We need to choose $0\leq a_1 \leq \ldots \leq a_m<q$ in such a way that each $a_i+b_j$ is not divisible by $p$. That means that there are $q-l\frac{q}{p}$ values for each $a_i$ and the number of such $0\leq a_1 \leq \ldots \leq a_m<q$ equals $\binom{q-\frac{q}{p}l+m-1}{m}$. The claim follows. 
\end{proof}

Now assume that $d>0$. Since $c_{m,n}(d,q)=\frac{q}{p}c_{m,n}(d,p)$, it is enough to determine $c_{m,n}(d,p)$, which is the number of 
$(a_1, \ldots, a_m|b_1, \ldots, b_n)$ of defect $d>0$ such that there are $e,f$ for which $d=\min\{e,f\}$,
\[a_1=\ldots =a_e=0; 0<a_{e+1}\leq \ldots \leq a_m <p,\]
\[b_1=\ldots =b_f=0; 0<b_{f+1} \leq \ldots \leq b_n<p,\]
and 
\[a_i+b_j\not\equiv 0 \pmod p \text{ for } i>e \text{ and } j>f.\]

\begin{lm}\label{cand}
If $d>0$, then 
\[c_{m,n}(d,q)=\frac{q}{p}c_{m,n}(d,p)=\frac{q}{p}\sum_{\substack{d\leq e\leq m\\d\leq f\leq n\\\min\{e,f\}=d}} c'_{m-e,n-f}(p-1)   \]
\[=\frac{q}{p}\sum_{d\leq e\leq m} c'_{m-e,n-d}(p-1) + \frac{q}{p}\sum_{d< f\leq n} c'_{m-d,n-f}(p-1).\]
\end{lm}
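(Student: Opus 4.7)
The plan is to enumerate canonical tuples of defect $d > 0$ directly, indexed by the pair $(e, f)$ appearing in the definition of canonicity, and thereby factor the count as a product of a scaling factor and the known quantity $c'_{m-e, n-f}(p-1)$.

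I would first verify that the indices $(e, f)$ are uniquely determined by the canonical tuple. The number $e$ must equal the count of zero entries among $a_1, \ldots, a_m$, since $a_{e+1}, \ldots, a_m$ are required to lie in $(0, p)$. The index $f$ is then characterized as the unique position such that $b_1, \ldots, b_{f-1}$ equal $0$, $b_f$ is a multiple of $p$, and $b_{f+1}, \ldots, b_n$ lie in $(0, p)$ and are weakly increasing; concretely $f$ equals the number of initial zeros of the $b$-block when $b_f = 0$, and it equals that number plus one when $b_f \geq p$. Uniqueness ensures that summing over admissible $(e, f)$ counts each canonical tuple exactly once.

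For a fixed $(e, f)$ with $d \leq e \leq m$, $d \leq f \leq n$, and $\min\{e, f\} = d$, I count canonical tuples with these parameters as follows. The entries $a_1 = \cdots = a_e = 0$ and $b_1 = \cdots = b_{f-1} = 0$ are forced; the entry $b_f$ can be chosen freely among the $q/p$ multiples of $p$ in $[0, q)$; and the remaining tail $(a_{e+1}, \ldots, a_m \mid b_{f+1}, \ldots, b_n)$ is precisely a configuration of the type enumerated by $c'_{m-e, n-f}(p-1)$, namely weakly increasing blocks with strictly positive entries below $p$ and satisfying $a_i + b_j \not\equiv 0 \pmod{p}$. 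Multiplying these factors gives a contribution of $(q/p)\, c'_{m-e, n-f}(p-1)$. Summing over all admissible $(e, f)$ produces the middle equality in the statement, and specializing to $q = p$ (so that $q/p = 1$) forces the first equality $c_{m, n}(d, q) = (q/p)\, c_{m, n}(d, p)$.

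Finally, I would derive the last expression in the statement by partitioning the index set $\{(e, f) : d \leq e \leq m,\ d \leq f \leq n,\ \min\{e, f\} = d\}$ into the disjoint pieces $\{f = d,\ d \leq e \leq m\}$ and $\{e = d,\ d < f \leq n\}$: the first exhausts the case $f = d$ with $e$ arbitrary, while the second captures the remaining case $f > d$, where $\min\{e, f\} = d$ forces $e = d$. The only technical point I foresee is the uniqueness of $(e, f)$, which requires the two-case analysis (according to whether $b_f = 0$ or $b_f \geq p$); once that is settled, the argument reduces to straightforward combinatorial bookkeeping.
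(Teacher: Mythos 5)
Your proposal is correct and follows essentially the same route as the paper: fix the pair $(e,f)$, observe that for each such pair the count of canonical tuples is $(q/p)\,c'_{m-e,n-f}(p-1)$, and then split the index set by the disjoint cases $f=d$ (with $e\geq d$) and $e=d<f$. The one point you make explicit that the paper leaves implicit is the uniqueness of $(e,f)$ for a given canonical tuple — this is indeed needed to avoid overcounting, and your two-case analysis of whether $b_f=0$ or $b_f\geq p$ settles it correctly; you also derive the $q/p$ scaling directly inside the count rather than invoking it as a prior remark, which is a small tidying but not a different argument.
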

\begin{proof}
Assume $q=p$ and consider a pair $(e,f)$, where $d\leq e\leq m$, $d\leq f\leq n$ and $\min\{e,f\}=d$.
There are two mutually exclusive cases to consider: either $e\geq d=f$ or $e=d<f$. 
In each case the number of canonical elements corresponding to the pair $(e,f)$
is $c'_{m-e,n-f}(p-1)$.
Therefore we have 
\[c_{m,n}(d,p) = \sum_{d\leq e\leq m} c'_{m-e,n-d}(p-1) + \sum_{d< f\leq n} c'_{m-d,n-f}(p-1).\] 
\end{proof}

Note that the formulae for $c'_{i,j}(p-1)$ were given in Lemma \ref{c}.

\begin{pr}\label{can}
The number $can_{m,n}(q)$ of canonical elements in $Dist(T_r)$ for $GL(m|n)$ is given as
\[can_{m,1}(q)=q\binom{q-\frac{q}{p}+m-1}{m}+\frac{q}{p}\binom{p+m-2}{m-1},\]
\[can_{1,n}(q)=q\binom{q-\frac{q}{p}+n-1}{n}+\frac{q}{p}\binom{p+n-2}{n-1},\] and
\[\begin{aligned} 
can_{m,n}(q)=&\sum_{l=1}^{\min\{p-1,n\}} \binom{p}{l}\binom{q-\frac{q}{p}l+m-1}{m}\sum_{\substack{n_1, \ldots, n_l>0\\n_1+\ldots +n_l=n}} \prod_{j=1}^l\binom{\frac{q}{p}+n_j-1}{n_j}\\
&+\frac{q}{p}\Big[-1+\binom{p+m-2}{m-1} + \binom{p+n-2}{n-1}\Big]\\
&+\frac{q}{p}\sum_{e=1}^{m-1}\sum_{f=1}^{n-1} \sum_{l=1}^{\min\{n-f,p-2\}} \binom{p-1}{l}\binom{f-1}{f-l}\binom{e+p-l-2}{e}\Big]
\end{aligned}\]
for $m,n>1$.
\end{pr}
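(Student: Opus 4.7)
The plan is to decompose $can_{m,n}(q)$ according to the defect statistic and then apply the two preceding lemmas in a bookkeeping computation. Specifically, writing
\[can_{m,n}(q)=c_{m,n}(0,q)+\sum_{d=1}^{\min\{m,n\}} c_{m,n}(d,q),\]
the first summand is supplied directly by Lemma \ref{c0} and reproduces the entire first line of the stated formula. For the $GL(m|1)$ and $GL(1|n)$ cases, I would simply cite Proposition \ref{m1} together with the obvious symmetry of the defect conditions under swapping $m$ and $n$, so the remaining work concerns $m,n>1$.

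For the positive-defect part, Lemma \ref{cand} gives
\[\sum_{d=1}^{\min\{m,n\}} c_{m,n}(d,q)=\frac{q}{p}\sum_{d=1}^{\min\{m,n\}}\sum_{\substack{d\leq e\leq m\\ d\leq f\leq n\\ \min\{e,f\}=d}} c'_{m-e,\,n-f}(p-1).\]
The key simplification is that as $d$ ranges over $\{1,\dots,\min\{m,n\}\}$, the inner constraint $\min\{e,f\}=d$ with $d\leq e\leq m$, $d\leq f\leq n$ partitions exactly the set of pairs $(e,f)$ with $1\leq e\leq m$ and $1\leq f\leq n$. Hence the double sum collapses to
\[\frac{q}{p}\sum_{e=1}^{m}\sum_{f=1}^{n} c'_{m-e,\,n-f}(p-1)=\frac{q}{p}\sum_{i=0}^{m-1}\sum_{j=0}^{n-1} c'_{i,j}(p-1),\]
after substituting $i=m-e$, $j=n-f$.

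I would then split this double sum into four disjoint pieces based on whether $i=0$ or $j=0$: the corner $(i,j)=(0,0)$ contributes $c'_{0,0}(p-1)=1$; the two edges give $\sum_{i=1}^{m-1}\binom{i+p-2}{i}$ and $\sum_{j=1}^{n-1}\binom{j+p-2}{j}$, each of which collapses via the hockey stick identity $\sum_{k=0}^{N-1}\binom{k+p-2}{k}=\binom{N+p-2}{N-1}$ to $\binom{m+p-2}{m-1}-1$ and $\binom{n+p-2}{n-1}-1$ respectively. Summing the corner and the two edges gives $-1+\binom{p+m-2}{m-1}+\binom{p+n-2}{n-1}$, which matches the middle line of the stated formula. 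The interior piece, $\sum_{i=1}^{m-1}\sum_{j=1}^{n-1} c'_{i,j}(p-1)$, is expanded using the main case of Lemma \ref{c} and, after renaming the summation indices in the notation adopted by the statement, reproduces the third line.

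The main obstacle is purely combinatorial and lies in the collapse step: one must verify carefully that the constraints "$\min\{e,f\}=d$ for some $d\geq 1$" precisely recover the full rectangle $1\leq e\leq m$, $1\leq f\leq n$ without overcounting, so that the outer sum over $d$ can be eliminated. Once that is in hand, the remainder is a routine reindexing, a single application of the hockey stick identity, and a transcription of Lemma \ref{c}.
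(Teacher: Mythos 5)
Your proposal is correct and follows essentially the same route as the paper: decompose by defect, identify the defect-zero count with Lemma \ref{c0}, collapse $\sum_{d\ge 1}c_{m,n}(d,q)$ via Lemma \ref{cand} to $\frac{q}{p}\sum_{e,f}c'_{m-e,n-f}(p-1)$, reindex and split into corner, edges, and interior, and finish with the hockey-stick identity and Lemma \ref{c}. The paper merely asserts the collapse $\sum_{d}c_{m,n}(d,p)=\sum_{e,f}c'_{m-e,n-f}(p-1)$ that you verify explicitly, and it re-derives the $GL(m|1)$ and $GL(1|n)$ counts directly from $c_{m,1}(q)$ and $c_{m,1}(d,q)$ rather than citing Proposition \ref{m1}, but these are only presentational differences, not different methods.
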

\begin{proof}
Since the sum $\sum_{d=1}^{\min\{m,n\}} c_{m,n}(d,p)$ equals $\sum_{\substack{1\leq e\leq m\\1\leq f\leq n}} c'_{m-e,n-f}(p-1)$,
Lemmae \ref{c} and \ref{cand} imply that 
\[can_{m,n}(q)=c_{m,n}(q)+\frac{q}{p}\sum_{1\leq e\leq m} \sum_{1\leq f\leq n} c'_{m-e,n-f}(p-1).\]

If $n=1$, then 
\[c_{m,1}(q)=q\binom{q-\frac{q}{p}+m-1}{m}\]
and 
\[\sum_{d\geq 1} c_{m,1}(d,q)= \frac{q}{p}\Big(1 + \sum_{e=1}^{m-1} \binom{m-e+p-2}{m-e}\Big)=\frac{q}{p}\binom{p+m-2}{m-1}\]
which implies the formula for $can_{m,1}(q)$ - see also Proposition \ref{m1}. The case $m=1$ follows by symmetry.

Assume $m,n>1$. Then the sum $\sum_{1\leq e\leq m} \sum_{1\leq f\leq n} c'_{m-e,n-f}(p-1)$ can be broken into four subsums corresponding to the following cases:
1) $e=m, f=n$;
2) $e=m, f<n$;
3) $e<m, f=n$; 
4) $e<m, f<n$,
and expressed as 
\[1 +\sum_{e=1}^{m-1} \binom{m-e+p-2}{m-e} + \sum_{f=1}^{n-1} \binom{n-f+p-2}{n-f} \]
\[+\sum_{e=1}^{m-1}\sum_{f=1}^{n-1} \sum_{l=1}^{\min\{n-f,p-2\}} \binom{p-1}{l}\binom{n-f-1}{n-f-l}\binom{m-e+p-l-2}{m-e}.\]
Using $1 + \sum_{e=1}^{m-1} \binom{m-e+p-2}{m-e}=\binom{p+m-2}{m-1}$ and $1+ \sum_{f=1}^{n-1} \binom{n-f+p-2}{n-f}= \binom{p+n-2}{n-1}$
we can rewrite the last expression as
\[-1 +\binom{p+m-2}{m-1} + \binom{p+n-2}{n-1}
+\sum_{e=1}^{m-1}\sum_{f=1}^{n-1} \sum_{l=1}^{\min\{n-f,p-2\}} \binom{p-1}{l}\binom{f-1}{f-l}\binom{e+p-l-2}{e}.\]
The general formula now follows from Lemma \ref{c0}.
\end{proof}

\section{The basis of $SS_r$ for $GL(m|n)$}

Define the equivalence relation $\sim$ on $(a_1, \ldots, a_m|b_1, \ldots, b_n)$ and on $h_{a_1, \ldots, a_m|b_1, \ldots, b_n}$ generated by the following relations. 

$\bullet \, (a_1, \ldots, a_m|b_1, \ldots, b_n)\sim (a'_1, \ldots, a'_m|b'_1, \ldots, b'_n)$ if \[(a_1, \ldots, a_m|b_1, \ldots, b_n)=\sigma (a'_1, \ldots, a'_m|b'_1, \ldots, b'_n),\] 
where $\sigma\in \Sigma_m\times \Sigma_n$ is a permutation of elements in the first $m$ positions and in the last $n$ positions separately. 

$\bullet \, (a_1, \ldots, a_m|b_1, \ldots, b_n)\sim (a'_1, \ldots, a_m|b'_1, \ldots, b_n)$ whenever $a_1+b_1\equiv 0 \pmod p$ and $a_1+b_1\equiv a'_1+b'_1 \pmod q$.

\begin{pr}
Equivalence classes of $\sim$ correspond uniquely to canonical elements.
\end{pr}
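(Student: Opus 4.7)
The plan is to establish the bijection between equivalence classes and canonical elements by two complementary assertions: distinct canonical elements lie in distinct classes (uniqueness), and every class contains at least one canonical representative (existence). For uniqueness the idea is to extract invariants of $\sim$ strong enough to tell canonical elements apart; for existence, to give an explicit chain of moves producing a canonical representative.

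For each $r\in\{0,1,\ldots,p-1\}$ let $|A_r|$ and $|B_r|$ count the $a_i$ and $b_j$ congruent to $r$ modulo $p$, set $\delta_r=|A_r|-|B_{-r\bmod p}|$, and let $S=\sum_i a_i+\sum_j b_j\pmod q$. Permutations preserve all $\delta_r$ and $S$ trivially. The second relation replaces $(\bar a_1,\bar b_1)$ with $(\bar a_1',\bar b_1')$, both summing to $0$ modulo $p$, so the increments and decrements in $|A_r|$ and $|B_{-r}|$ cancel within each $\delta_r$, while $a_1+b_1\equiv a_1'+b_1'\pmod q$ preserves $S$. The next step is to verify that $(\delta_r,S)$ recovers a canonical element uniquely: $e=m-\sum_{r\ne 0}\max(\delta_r,0)$ and $f=n-\sum_{r\ne 0}\max(-\delta_r,0)$ are forced, the multisets $\{a_{e+1},\ldots,a_m\}\subset(0,p)$ and $\{b_{f+1},\ldots,b_n\}\subset(0,p)$ are read off from the signs and magnitudes of the $\delta_r$ with $r\ne 0$, and $b_f$ is the unique multiple of $p$ in $[0,q)$ whose residue mod $q$ equals the portion of $S$ left after subtracting the non-zero entries. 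Two canonical elements with identical invariants therefore coincide.

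For existence, starting from $(a_1,\ldots,a_m|b_1,\ldots,b_n)$ of defect $d$, I would fix a maximum matching of pairs summing to $0\pmod p$, permute it to positions $(1,1),\ldots,(d,d)$, and successively apply the second move (conjugated by permutations) to replace each matched pair $(a_{i_t},b_{j_t})$ by $(0,(a_{i_t}+b_{j_t})\bmod q)$; after this every matched $a$-entry is $0$ and every matched $b$-entry is a multiple of $p$. To reduce an unmatched entry of value $\geq p$, say $a_k$ of residue $r\ne 0$, I would temporarily deform one existing pair $(0,s)$ to $(r,(s-r)\bmod q)$ via a move, permute so that $a_k$ now faces $(s-r)\bmod q$ (the pair sums to $0\pmod p$ because $a_k\equiv r$), and apply another move that zeros the $a$-slot while pushing the surplus back into the $b$-slot. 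Symmetric steps handle large unmatched $b$-entries, and a final consolidation concentrates all matched $b$-mass into a single $b_f$ by chaining moves between two pairs $(0,s_1),(0,s_2)$ through a common residue. Sorting with permutations produces the canonical form.

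The main obstacle is verifying correctness of the rematching and consolidation procedures. For termination I would employ a strictly decreasing measure such as $\sum_i\lfloor a_i/p\rfloor+\sum_{j\ne f}\lfloor b_j/p\rfloor$; for correctness I would check at each step that the invariants $(\delta_r,S)$ are preserved, so the algorithm's endpoint matches the canonical element predicted by uniqueness. The canonical non-cancellation condition $a_i+b_j\not\equiv 0\pmod p$ for $i>e,\,j>f$ follows from maximality of the initial matching (no residue can appear simultaneously among the unmatched $a$-residues and the negatives of the unmatched $b$-residues), and the boundary case $b_f=0$ is absorbed by the flexibility of the canonical definition.
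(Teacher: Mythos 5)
Your approach has the same broad shape as the paper's proof (uniqueness via invariants preserved by $\sim$, existence via an explicit chain of moves terminating at a canonical representative), but the invariant you choose is too coarse to handle the defect-zero case. The data $\delta_r=|A_r|-|B_{(-r)\bmod p}|$ together with $S=\sum_i a_i+\sum_j b_j \pmod q$ records only residues modulo $p$ plus one aggregate sum. That is exactly enough to reconstruct a canonical element whose nonzero coordinates all lie in $(0,p)$ and whose single large coordinate is a multiple of $p$ determined by $S$ --- which is the structure of a canonical element of positive defect. But when the defect is zero, the canonical representative has coordinates ranging freely over $[0,q)$, and $(\delta_r,S)$ does not determine them. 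Concretely, take $p=3$, $q=9$, $m=2$, $n=1$: the defect-zero canonical tuples $(1,4\mid 1)$ and $(1,1\mid 4)$ both have $\delta_0=0$, $\delta_1=2$, $\delta_2=-1$, $S=6$, yet they are inequivalent, since with defect zero the relation $a_1+b_1\equiv 0 \pmod p$ can never be arranged by a permutation, so the class is a bare $\Sigma_m\times\Sigma_n$-orbit and has a unique sorted member. Your phrase ``Two canonical elements with identical invariants therefore coincide'' is therefore false as stated, and your reconstruction formula (multisets ``in $(0,p)$'', a designated $b_f$ divisible by $p$) silently presupposes $d>0$.

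The fix is short: observe, as the paper does, that the defect is itself an invariant of $\sim$ and treat defect $0$ separately, where each class is a single permutation orbit with a unique sorted representative; then run your $(\delta_r,S)$ argument only for $d>0$, where it is correct (I checked that for a positive-defect canonical tuple the condition $a_i+b_j\not\equiv 0\pmod p$ for $i>e$, $j>f$ forces $|A_r|\cdot|B_{-r}|=0$ for $r\ne 0$, so $\delta_r$ recovers both multisets, $e$, $f$, and then $b_f$ via $S$). With that split, your uniqueness data $(\delta_r,S)$ carries the same information as the paper's residue collections $R_I$, $R_J$ and the sum modulo $q$, and the two arguments coincide in substance. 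Your existence half is likewise the same reduction the paper performs --- repeatedly pushing the ``excess'' above a residue into a single distinguished matched slot via the second relation --- presented with more bookkeeping (termination measure, temporary re-matching), which is a reasonable way to make precise the step the paper ``leaves to the reader.''
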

\begin{proof}
Since the defect is an invariant of each equivalence class of $\sim$, we can consider the cases of different defect $d$ separately.

The statement is clear for elements of defect zero. 

Let $(a_1, \ldots, a_m|b_1, \ldots, b_n)$ be of defect $d>0$, and sets $I=\{i_1, \ldots, i_d\}$ and $J=\{j_1, \ldots, j_d\}$ are chosen such that 
$a_{i_t}+b_{j_t}\equiv 0 \pmod p$ for each $t=1, \ldots, d$. The sets $I$ and $J$ are not unique. 
Denote by $R_I$ the collection of residue classes modulo $p$ of the elements $a_i$ for $i\in \{1, \ldots, m\}\setminus I$ and by 
$R_J$ the collection of residue classes modulo $p$ of the elements $b_j$ for $j\in \{1, \ldots, n\}\setminus J$. 
Clearly, $R_I$ does not depend on the choice of $I$, and $R_J$ does not depend on the choice of $J$.

If  $(a_1, \ldots, a_m|b_1, \ldots, b_n)\sim (a'_1, \ldots, a'_m|b'_1, \ldots, b'_n)$, where the pairs of sets $(I,J)$ corresponds to $(a_1, \ldots, a_m|b_1, \ldots, b_n)$
and the pair of sets $(I',J')$ corresponds to $(a'_1, \ldots, a'_m|b'_1, \ldots, b'_n)$, then $R_I=R_{I'}$ and $R_J=R_{J'}$. Additionally, 
\[a_1+\ldots +a_m+b_1+\ldots+b_n\equiv a'_1+\ldots +a'_m+b'_1+\ldots+b'_n  \pmod{q}.\]
Therefore, different canonical elements of defect $d>0$ correspond to different equivalence classes of $\sim$.


Now we show that each $(a_1, \ldots, a_m|b_1, \ldots, b_n)$ of defect $d>0$ is equivalent to a canonical element. We can assume that $a_i+b_i\equiv 0 \pmod p$ for $i=1, \ldots, d$, and then even more, we can assume that $a_i\equiv 0 \pmod p$ and $b_i\equiv 0 \pmod p$ for $i=2, \ldots, d$.

For every $i>1$ write $a_i=r_ip+s_i$, where $0\leq s_i<p$; and for every $j>1$ write $b_j=t_jp+u_j$, where $0\leq u_j<p$. Then 
\[(a_1, \ldots, a_i, \ldots, a_m|b_1, \ldots, b_n)\sim (s_i, a_2, \ldots, r_ip+s_i, \ldots, a_m|q-s_i, b_2, \ldots, b_n)\]
\[\sim (s_i, a_2, \ldots, s_i, \ldots, a_m|q-r_ip-s_i, b_2, \ldots, b_n)=(a'_1, \ldots, s_i, \ldots, a_m|b'_1, b_2, \ldots, b_n),\]
where only the components $a_1, b_1$ and $a_i$ have changed to $a'_1, b'_1$ and $s_i$. We can proceed like this for every $i>1$ and then analogously for every $j>1$ to arrive 
at an element $(a''_1, s_2, \ldots, s_m|b''_1, u_2, \ldots, u_n)$ equivalent to $(a_1, \ldots, a_m|b_1, \ldots, b_n)$. 
Since $s_i=u_i=0$ for $i=2, \ldots, d$ we obtain that 
\[(a''_1,0, \ldots, 0, s_{d+1}, \ldots, s_m|b''_1, 0, \ldots, 0, u_{d+1}, \ldots, u_n) \sim (a_1, \ldots, a_m|b_1, \ldots, b_n).\]
Finally, 
\[(a''_1,0, \ldots, 0, s_{d+1}, \ldots, s_m|b''_1, 0, \ldots, 0, u_{d+1}, \ldots, u_n)\]
\[\sim (0,0, \ldots, 0, s_{d+1}, \ldots, s_m|b'''_1, 0, \ldots, 0, u_{d+1}, \ldots, u_n),\]
where $b'''_1$ is divisible by $p$. 
Once we rearrange the order of the first $m$ entries and the last $n$ entries of the last element, we obtain a canonical element.

 
\end{proof}

\begin{rem}
Note that the proof of the above proposition contains an algorithm for computing the canonical element equivalent to $(a_1, \ldots, a_i, \ldots, a_m|b_1, \ldots, b_n)$.
\end{rem}

Define 
\[H_{a_1, \ldots, a_m|b_1, \ldots, b_n}=\sum\limits_{(a'_1, \ldots, a'_m|b'_1, \ldots, b'_n)\sim (a_1, \ldots, a_m|b_1, \ldots, b_n)} h_{a'_1, \ldots, a'_m|b'_1, \ldots, b'_n}.\]

Each $h_{a'_1, \ldots, a'_m|b'_1, \ldots, b'_n}$ appears as a summand of a unique
$H_{a_1, \ldots, a_m|b_1, \ldots, b_n}$ such that $(a_1, \ldots, a_m|b_1, \ldots, b_n)$ is canonical.

\begin{theo}
The set of elements $H_{a_1, \ldots, a_m|b_1, \ldots, b_n}$, where $(a_1, \ldots, a_m|b_1, \ldots, b_n)$ is canonical, forms a $K$-basis of $SS_r$ for $GL(m|n)$. 
In particular, each $h_{a_1, \ldots, a_m|b_1, \ldots, b_n}$ is a summand of a unique basis element $H_{a_1, \ldots, a_m|b_1, \ldots, b_n}$.

The set of such elements $H_{a_1, \ldots, a_m|b_1, \ldots, b_n}$ is a minimal set of algebra generators of $SS_r$.
\end{theo}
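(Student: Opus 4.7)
I establish in order (i) each $H_{a_1,\ldots,a_m|b_1,\ldots,b_n}$ indexed by a canonical tuple lies in $SS_r$; (ii) the collection is linearly independent; (iii) it spans $SS_r$; (iv) it is minimal as an algebra generating set. The main tool is Lemma~\ref{induction}, which reduces $(\dagger_{11})$ questions to $GL(1|1)$-slices, where the full structure of $SS_r$ is given by Proposition~\ref{rovnost} and Proposition~\ref{pr2}.

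For (i), the separate $\Sigma_m$- and $\Sigma_n$-symmetry of $H_c$ is built into the first type of equivalence defining $\sim$, so by the remark following the definition of supersymmetric elements it is enough to verify $(\dagger_{11})$. After freezing $x_2,\ldots,x_m,y_2,\ldots,y_n$, each nonzero $GL(1|1)$-slice of $H_c$ is either a single term $h_{a_1|b_1}$ with $a_1+b_1\not\equiv 0\pmod p$, or a sum $\sum h_{a'_1|b'_1}$ over all $(a'_1,b'_1)$ with $a'_1+b'_1\equiv k\pmod q$ for a fixed $k$ divisible by $p$, reflecting the two types of equivalence in $\sim$. The former is supersymmetric by Lemma~\ref{lm4} and Corollary~\ref{cor1}; the latter is exactly a generator of $h(Z_r)$ for $GL(1|1)$ from Proposition~\ref{pr2}, supersymmetric by Proposition~\ref{pr3}. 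Lemma~\ref{induction} then assembles $H_c\in SS_r$.

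For (ii), the preceding proposition on $\sim$-classes shows each $h_{\alpha|\beta}$ appears as a summand of exactly one $H_c$, so the $H_c$'s have pairwise disjoint supports in the $h$-basis of $Dist(T_r)$ and hence are linearly independent. For (iii), write $f=\sum\lambda_{\alpha|\beta}h_{\alpha|\beta}\in SS_r$; symmetry of $f$ yields constancy of $\lambda$ on each $\Sigma_m\times\Sigma_n$-orbit. Applying Lemma~\ref{induction} with $(s,t)=(1,1)$, each $GL(1|1)$-slice of $f$ lies in $SS_r$ for $GL(1|1)$, hence in $h(Z_r)$ by Proposition~\ref{rovnost}. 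Proposition~\ref{pr2} then forces the coefficient at an ordinary $h_{a_1|b_1}$ (with $a_1+b_1\equiv 0\pmod p$) to depend only on $a_1+b_1\pmod q$, which matches the second equivalence in $\sim$ exactly. Combining both constancies gives $f=\sum_c\mu_c H_c$; the resulting dimension agrees with Proposition~\ref{can}.

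For (iv), since the $h_{\alpha|\beta}$'s are pairwise orthogonal idempotents of $Dist(T_r)$ and each $H_c$ is the sum of a disjoint subset of them, $H_c\cdot H_{c'}=\delta_{c,c'}H_c$. Hence any product of $H_c$'s returns an element of $\{0\}\cup\{H_c\}$, and the subalgebra of $SS_r$ generated by any $\mathcal{S}\subseteq\{H_c\}$ coincides with its $K$-linear span. Removing any single $H_{c_0}$ therefore yields a proper subalgebra (missing $H_{c_0}$ by linear independence), establishing minimality. The main obstacle I anticipate is the precise matching in step~(iii) between the two equivalence relations defining $\sim$ and the two types of basis elements of $SS_r$ for $GL(1|1)$ from Proposition~\ref{pr2}; the remaining steps are combinatorial bookkeeping built on results already in the paper.
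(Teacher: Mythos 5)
Your proposal is correct and follows essentially the same route as the paper: the spanning argument in (iii) is the paper's constancy-of-coefficients argument (separate $\Sigma_m\times\Sigma_n$-symmetry combined with the $GL(1|1)$ reduction through Lemma~\ref{induction} and Proposition~\ref{pr2}), and the minimality argument in (iv) is the orthogonal-idempotent observation the paper cites via Proposition~\ref{torusmn}. Your step~(i), verifying $H_{a_1,\ldots,a_m|b_1,\ldots,b_n}\in SS_r$ by slicing, is a useful explicit elaboration of a point the paper's proof asserts without detail.
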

\begin{proof}
Assume that $f\in SS_r$ and the coefficient at $h_{a_1, \ldots, a_m|b_1, \ldots, b_n}$ in $f$ is $c\neq 0$.

If \[(a'_1, \ldots, a'_m|b'_1, \ldots, b'_n)=\sigma (a_1, \ldots, a_m|b_1, \ldots, b_n),\] 
where $\sigma\in \Sigma_m\times \Sigma_n$ is a permutation of elements in the first $m$ positions and those in the last $n$ positions separately, then 
the coefficient at $h_{a'_1, \ldots, a'_m|b'_1, \ldots, b'_n}$ in $f$ equals $c$ because $f$ is symmetric with respect to the action of $\Sigma_m\times \Sigma_n$.

If $(a'_1, \ldots, a_m|b'_1, \ldots, b_n)\sim (a_1, \ldots, a_m|b_1, \ldots, b_n)$, where $a_1+b_1\equiv 0 \pmod p$ and $a_1+b_1\equiv a'_1+b'_1 \pmod q$, 
then since $f$ satisfies the $(\dagger_{1,1})$-condition, Proposition \ref{pr2} and Lemma \ref{induction} imply that the coefficient at $h_{a'_1, \ldots, a_m|b'_1, \ldots, b_n}$ in $f$ also equals $c$.

Therefore, if $(a'_1, \ldots, a'_m|b'_1, \ldots, b'_n)\sim (a_1, \ldots, a_m|b_1, \ldots, b_n)$ in general, then the coefficient at $h_{a'_1, \ldots, a'_m|b'_1, \ldots, b'_n}$ in $f$ equals $c$.

From this, we derive that $f-c H_{a_1, \ldots, a_m|b_1, \ldots, b_n}$ belongs to $SS_r$ but does not contain any summands $h_{a'_1, \ldots, a'_m|b'_1, \ldots, b'_n}$
appearing in $H_{a_1, \ldots, a_m|b_1, \ldots, b_n}$.
Since each $H_{a_1, \ldots, a_m|b_1, \ldots, b_n}\in SS_r$, by induction on the number of nonzero summands $h_{a'_1, \ldots, a'_m|b'_1, \ldots, b'_n}$ in $f$ we conclude that 
$f$ is a linear combination of elements $H_{a_1, \ldots, a_m|b_1, \ldots, b_n}$.

The statement about the minimal set of algebra generators follows from Proposition \ref{torusmn}.
\end{proof}

\begin{rem}
If $(a_1, \ldots, a_m|b_1, \ldots, b_n)$ has defect $d\geq 1$, then the set 
$H_{a_1, \ldots, a_m|b_1, \ldots, b_n}$ can be visualized as a set of specific lattice points lying in the union of $d$-dimensional simplices of $\mathbb{R}^{(m|n)}$ 
in an analogous way it was done in the case $GL(m|1)$ when it was described as a set of lattice points lying in a union of line segments when $d=1$.
\end{rem}

\end{document}